\theoremstyle{plain}
\newtheorem{thm}{Theorem}[section]
\newtheorem{cor}[thm]{Corollary}
\newtheorem{lem}[thm]{Lemma}
\newtheorem{prop}[thm]{Proposition}
\theoremstyle{definition}
\newtheorem{defn}[thm]{Definition}
\newtheorem{ex}[thm]{Example}
\theoremstyle{remark}
\newtheorem{rem}{Remark}
\def\cocoa{{\hbox{\rm C\kern-.13em o\kern-.07em C\kern-.13em o\kern-.15em A}}}
\def\reg{{\rm reg}}
\def\implies{\ifmmode\Rightarrow \else
        \unskip${}\Rightarrow{}$\ignorespaces\fi}
\def\Tor{\mathrm{Tor}}
\def\xb{{\mathbf{x}}}
\def\eb{{\mathbf{e}}}
\def\C{\mathcal{C}}
\def\supp{\mathrm{supp}}
\def\D{\mathcal{D}}
\begin{document}
	
\title[Multigraded resolution]{Multigraded minimal free resolutions of simplicial subclutters}
\author[M. Bigdeli]{Mina Bigdeli}
\address{Mina Bigdeli\\ School of Mathematics, Institute for Research in Fundamental Sciences (IPM),  P.O.Box: 19395-5746, Tehran, Iran}
\email{mina.bigdeli98@gmail.com, mina.bigdeli@ipm.ir}

\author[A. A. Yazdan Pour]{{Ali Akbar} {Yazdan Pour}}
\address{Ali Akbar Yazdan Pour\\ Department of Mathematics Institute for Advanced Studies in Basic Sciences (IASBS)\\ P.O.Box 45195-1159, Zanjan, Iran}
\email{yazdan@iasbs.ac.ir}
	
	\subjclass[2010]{Primary 13D02, 13F55; Secondary 05E45, 05C65.}
	\keywords{Betti number, Linear resolution, Regularity,  Chordal clutter, Simplicial subclutter}
	
\begin{abstract}
This paper concerns the study of a class of clutters called simplicial subclutters. Given a clutter $\mathcal{C}$ and its simplicial subclutter $\mathcal{D}$, we compare some algebraic properties and invariants of the ideals $I, J$ associated to these two clutters,  respectively. We give a formula for computing the (multi)graded Betti numbers of $J$ in terms of those of $I$ and some combinatorial data about $\mathcal{D}$. As a result, we see that if $\mathcal{C}$ admits a simplicial subclutter,  then there exists a monomial $u \notin I$ such that the (multi)graded Betti numbers of $I+(u)$ can be computed through those of $I$. It is proved that the Betti sequence of any graded ideal with linear resolution is the Betti sequence of an ideal associated to a simplicial subclutter of the complete clutter. These ideals turn out to have linear quotients. However, they do not form all the equigenerated square-free monomial ideals with linear quotients. If $\mathcal{C}$ admits $\varnothing$ as a simplicial subclutter, then $I$ has linear resolution over all fields. Examples show that the converse is not true.
\end{abstract}
\thanks{Mina Bigdeli was supported by a grant from IPM}
\maketitle

\section*{introduction}

Free resolutions have been a  central topic in commutative algebra since the work of David Hilbert and his celebrated theorem ``Hilbert Syzygy Theorem". They are a very important tool to study the properties of a graded module over finitely generated graded $\mathbb{K}$-algebras. They, in particular, are used to compute the Castelnuovo--Mumford regularity, projective dimension, depth, Hilbert function, etc. It is, in general very difficult to determine the whole resolution of a module, even using some computer programming systems.  However, algebraists try to find some tools to investigate the general shape of the resolution and to compute some numerical data explaining them. An important  invariant, regarding the graded minimal free resolutions, which carries most of the numerical data about them, is the (multi)graded Betti numbers. For $\textbf{\rm \textbf{a}} \in \mathbb{Z}^n$ and $i\in \mathbb{Z}$, the $(i,\textbf{\rm \textbf{a}})$-th Betti number of a finitely generated multigraded module $M$ over a multigraded standard $\mathbb{K}$-algebra $R$ is given by
$$\beta_{i, \textbf{\rm \textbf{a}}}(M)=\dim_{\mathbb{K}}\Tor^R_i(\mathbb{K};M)_{\textbf{\rm \textbf{a}}}.$$
While it is again too hard to determine the exact value of the (multi)graded Betti numbers, one may try to explain them in combinatorial terms for monomial ideals over the polynomial rings. Indeed, given a monomial ideal $I$, passing through polarization, one can obtain a square-free monomial ideal $J$ whereas lots of algebraic properties are preserved via this operation. Among all other properties, $I$ and $J$ share same graded Betti numbers. Hence, in order to study the graded free resolution of monomial ideals,  it is fair enough to concentrate on the square-free ones. On the other hand, there is a bijection between the class of square-free monomial ideals over a polynomial ring and some classes of combinatorial or topological objects. Given a square-free monomial ideal $I$ over the polynomial ring $S=\mathbb{K}[x_1,\ldots,x_n]$ one can associate a simplicial complex $\Delta_I$ to $I$ such that $F\subset [n]=\{1,\ldots,n\}$ does not belong to $\Delta_I$ if and only if $\mathbf{x}_F:=\prod_{i\in F}x_i$ belongs to $I$. A more combinatorial approach associates a collection $\mathcal{C}_I$ of subsets of $[n]$, called a clutter, to  the minimal generating set of $I$ as follows:  A subset $F\subset [n]$ belongs to $\C_I$ if and only if $\mathbf{x}_F$ is an element in the minimal generating set of $I$. In case $I$ is equigenerated, say in degree $d$, the complement of $\C_I$, denoted by $\bar{\C}_I$, is defined to be the collection of all $d$-subsets of $[n]$ not belonging to $\C_I$. Unless otherwise stated, throughout the paper, by the ideal attached to a clutter $\C$, we mean the ideal whose generators are in correspondence with the elements of the complement of $\C$.

 \medskip
 In this paper, we study a class of clutters, called simplicial subclutters.  In order to define this class, we first introduce, in Section~\ref{Main}, a reduction operation on  the uniform clutters -- the clutters whose elements have the same cardinality. This operation on a clutter $\C$ is based on removing an element $F$ which contains a particular subset, called simplicial element of $\C$. The definition of simplicial element and all other preliminaries is given in Section~\ref{prelim}.   A clutter has a simplicial subclutter if and only if it admits a non-trivial simplicial element. Not all clutters have such elements, see e.g Example~\ref{duncehat example}.   In Section~\ref{Main}, we deal with the ideals associated to  uniform clutters and their simplicial subclutters. Given a uniform clutter $\C$ and its simplicial subclutter $\D$, we investigate some algebraic properties of the ideal $J$ attached to $\D$ through those of $I$ attached to $\C$. By the structure of $\D$, we see that $J$ is an extension of $I$ to  which shares some properties with $I$.  In case $\C$ admits a simplicial subclutter, it is guaranteed that  there exists $u\notin I$ such that adding $u$ to $I$ does not change  the nonlinear (multi)graded Betti numbers. Note that for any monomial ideal $I$ generated in degree $d$ and any monomial $u$ with $\deg(u)=d$, $\beta_{i, \textbf{\rm \textbf{a}}} \left( I +(u) \right) = \beta_{i, \textbf{\rm \textbf{a}}} \left( I \right)$, where $\textbf{\rm \textbf{a}} \in \mathbb{Z}^n$ with $|\textbf{\rm \textbf{a}}|> d+i+r$, and  where $r$ is the Castelnuovo-Mumford regularity of the ideal $I \colon u$, (c.f. Theorem~\ref{easy to verify}). But one can not expect more for a general case: Example~\ref{example 1} shows that the equality does not always hold in some multidegrees  $\mathrm{\mathbf{a}}$ with $|\textbf{\rm \textbf{a}}|= d+i+r$ even in case $r=1$ (i.e. $I\colon u$ is generated by variables).  However, in Theorem~\ref{inequality of multigraded betti numbers-thm}, we present a formula for the multigraded Betti numbers of the ideal $I+(u)$, with $u=\mathbf{x}_F$, in terms of those of $I$ and some numerical data about $F$, where $I$ and  $I+(u)$ are given respectively by a clutter $\C$ and its simplicial subclutter obtained by removing $F$.  As a result, one can see that the two ideals share the same regularity. In particular, $I$ has a linear resolution if and only if so does $I+(\mathbf{x}_F)$. 

 The proof of Theorem~\ref{inequality of multigraded betti numbers-thm} is a direct proof having topological and combinatorial flavor. However, after presenting this result at the CMS meeting 2017, Huy T\`ai H\`a mentioned to the first author that the ideal $I+(\mathbf{x}_F)$ is splittable, \cite{E-K}. This fact can be observed by the choice of $F$. On the other hand, since splittable ideals are  Betti splittable, \cite{FHVt}, one gets the Betti formula of Theorem~\ref{inequality of multigraded betti numbers-thm} from the Betti splitting formula. This short proof of Theorem~\ref{inequality of multigraded betti numbers-thm} is presented as well. The authors would like to thank Huy T\`ai H\`a for this remark. 

The final point in this section is that the ideal $I$ satisfies the subadditivity condition if and only if  so does $I+(\mathbf{x}_F)$.

\medskip
In Section~\ref{complete}, we consider the class of simplicial subclutters of the complete clutter $\C_{n,d}=\{F \subset [n] \colon |F|=d\}$. We call it the class of \textit{simplicial clutters}. This class is the dual  of a class called chordal clutters \cite{BYZ}. Roughly speaking, a chordal clutter, is a clutter which can be reduced to $\varnothing$ by some reduction operations, while a simplicial clutter is obtained from $\C_{n,d}$ by some reduction operations. As a consequence of Theorem~\ref{inequality of multigraded betti numbers-thm}, it is seen that the ideals attached to the class of chordal clutters have linear resolution over all fields. This class has been studied in \cite{BYZ}. Later, in \cite{MS} a modification of this variation of chordality was studied for simplicial complexes. Trying to compare the properties of chordal clutters and simplicial clutters, it turns out that, like chordal clutters, the ideal associated to a simplicial clutter has linear resolution over all fields  and all of its graded Betti numbers can be identified explicitly (Corollary~\ref{resolution of a simplicial subcluuter of complete clutter-cor}). Moreover, these ideals have linear quotients, while not all ideals of chordal clutters have this property, \cite[Example~3.14]{BYZ}. 

In  Theorem~\ref{stable ideals are simplicial subclutters of complete clutters} we see that the  square-free stable ideals are  examples of ideals associated to simplicial subclutters. 
In \cite{MJAR}, it is proved that any Betti sequence of an ideal with linear resolution is the Betti sequence of an ideal associated to a chordal clutter. One of the the main results of Section~\ref{complete}  improves this result
stating that the Betti sequence of any ideal with linear resolution coincides with the Betti sequence of an ideal associated to a simplicial clutter (see Theorem~\ref{all}). Hence all the Betti sequences of ideals with linear resolution are the Betti sequences of ideals with linear quotients.

One may ask if all the square-free equigenerated monomial ideals with linear quotients are attached to a simplicial clutter. In the last section, Section~\ref{44}, we give a negative answer to this question by presenting a simple example. On the other hand,  some known classes of ideals with linear resolution over all fields, such as square-free stable ideals, matroidal ideals and alexander dual of vertex decomposable ideals, are associated to  chordal clutters. Hence one may ask if all the square-free monomial ideals with linear resolution over all fields are attached to chordal clutters, \cite[Question~1]{BYZ}. We close  Section~\ref{44} showing by   some examples that, this is not the case. However,  to the best of our knowledge, this class is one of the largest known classes of clutters whose  associated ideal has linear resolution over every field.

\section{Preliminaries} \label{prelim}
Throughout this paper, $S=\mathbb{K}[x_1, \ldots, x_n]$ denotes the polynomial ring over the  field $\mathbb{K}$ endowed with multigraded standard grading; that is $\deg(x_i) = e_i$, where $e_i$s are the standard basis of $\mathbb{R}^n$.

\subsection{Multigraded Betti numbers}
Let $I \subset S$ be a monomial ideal in the polynomial ring $S$. Then $I$ is a multigraded $S$-module and so it admits a minimal multigraded free $S$-resolution
$$\mathcal{F}\colon \cdots \to F_2 \to F_1 \to F_0 \to I \to 0,$$
where $F_i = \bigoplus_{\textbf{\rm \textbf{a}} \in \mathbb{Z}^n} S \left( -\textbf{\rm \textbf{a}} \right)^{\beta_{i,\textbf{\rm \textbf{a}}}(I)}$. The numbers $\beta_{i,\textbf{\rm \textbf{a}}}(I) = \dim_\mathbb{K} \mathrm{Tor}^S_i \left( \mathbb{K}, I \right)_{\textbf{\rm \textbf{a}}}$ are called the \textit{multigraded Betti numbers of} $I$.

For $\textbf{\rm \textbf{a}}=(a_1, \ldots, a_n) \in \mathbb{Z}^n$, let $\mathrm{supp}(\textbf{\rm \textbf{a}}) = \{i \colon\, a_i \neq 0 \}$ and $|\textbf{\rm \textbf{a}}|= \sum_i a_i$. Define
$$\beta_{i,j}(I) = \sum\limits_{\textbf{\rm \textbf{a}} \in \mathbb{Z}^n \atop |\textbf{\rm \textbf{a}}|=j} \beta_{i, \textbf{\rm \textbf{a}}} (I).$$
The numbers $\beta_{i,j} (I)$ are the \textit{graded Betti numbers of} $I$ with respect to the standard grading on $S$ (i.e. $\deg(x_i)=1$). The \textit{Castelnuovo-Mumford regularity} of $I$, $\mathrm{reg} (I)$, is defined as follows:
$$\mathrm{reg} (I) = \max \{ j-i \colon \; \beta_{i,j} (I) \neq 0 \}.$$
A non-zero homogeneous ideal $I \subset S$ is said to have a $d$-linear resolution if $\beta_{i,j} (I) =0$, for all $i,j$ with $j-i>d$. In this case $I$ is generated by homogeneous elements of degree $d=\mathrm{reg}(I)$. A big class of ideals with linear resolution is the class of equigenerated ideals which have linear quotients,  \cite[Proposition~8.2.1]{HHBook}. An ideal $I$ is said to have \textit{linear quotients}, if $I$ has an ordered set of minimal generators $\left\{ u_1, \ldots, u_r \right\}$ such that the colon ideals $\left( u_1, \ldots, u_{i-1} \right) \colon u_i$ are generated by linear forms, for $i= 2, \ldots, r$.

\medspace
In general, adding an arbitrary monomial $u$ to a monomial ideal $I$ changes almost all of the Betti numbers. However, if $\deg u$  equals the degree of the generators of $I$, then we have

\begin{thm} \label{easy to verify}
Let $I \subset S$ be a monomial ideal  generated by elements in degree $d$ and let $u$ be a monomial in $S$ of degree $d$ with $u \not\in I$. Then
\begin{equation*}
\beta_{i, \textbf{\rm \textbf{a}}} \left( I +(u) \right) = \beta_{i, \textbf{\rm \textbf{a}}} \left( I \right),
\end{equation*}
for all $\textbf{\rm \textbf{a}} \in \mathbb{Z}^n$ with $|\textbf{\rm \textbf{a}}|> d+i+r$, where  $r= \mathrm{reg} (I \colon u)$. 
\end{thm}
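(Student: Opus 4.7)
The plan is to extract the statement from the long exact sequence of Tor associated with a short exact sequence comparing $I$, the principal ideal $(u)$, and their sum. Writing $u = \mathbf{x}^{\mathbf{b}}$ with $|\mathbf{b}|=d$, the standard isomorphism $I \cap (u) = u\cdot (I\colon u)$ together with $(u) \cong S(-\mathbf{b})$ yields the multigraded short exact sequence
\begin{equation*}
0 \longrightarrow (I\colon u)(-\mathbf{b}) \longrightarrow I \oplus S(-\mathbf{b}) \longrightarrow I+(u) \longrightarrow 0,
\end{equation*}
where the first map sends $v \mapsto (uv,-v)$ and the second sends $(f,g)\mapsto f+ug$.

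Next I would apply $\Tor^S_\bullet(\mathbb{K},-)$ to this sequence. Since $\Tor_i^S(\mathbb{K},S)=0$ for all $i\geq 1$, reading off the multigraded component $\mathbf{a}$ from the resulting long exact sequence gives, for $i\geq 1$,
\begin{equation*}
\Tor_i(\mathbb{K},I\colon u)_{\mathbf{a}-\mathbf{b}} \longrightarrow \Tor_i(\mathbb{K},I)_{\mathbf{a}} \longrightarrow \Tor_i(\mathbb{K},I+(u))_{\mathbf{a}} \longrightarrow \Tor_{i-1}(\mathbb{K},I\colon u)_{\mathbf{a}-\mathbf{b}} \longrightarrow \Tor_{i-1}(\mathbb{K},I)_{\mathbf{a}},
\end{equation*}
so the difference $\beta_{i,\mathbf{a}}(I+(u))-\beta_{i,\mathbf{a}}(I)$ is sandwiched between the contributions of the two Betti numbers $\beta_{i,\mathbf{a}-\mathbf{b}}(I\colon u)$ and $\beta_{i-1,\mathbf{a}-\mathbf{b}}(I\colon u)$. (The case $i=0$ is trivial because both ideals are generated in degree $d$, so both sides vanish once $|\mathbf{a}|>d$.)

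The final step is a bookkeeping check against the regularity bound. The hypothesis $|\mathbf{a}|>d+i+r$ translates to $|\mathbf{a}-\mathbf{b}|-i>r$ and, a fortiori, $|\mathbf{a}-\mathbf{b}|-(i-1)>r$, so by the very definition of regularity both $\beta_{i,\mathbf{a}-\mathbf{b}}(I\colon u)$ and $\beta_{i-1,\mathbf{a}-\mathbf{b}}(I\colon u)$ vanish, forcing the equality of the middle two Betti numbers. There is no real obstacle beyond choosing the right short exact sequence; the only delicate point is to verify that the strict inequality in $|\mathbf{a}|>d+i+r$ is sharp enough to kill both neighboring Tor groups of $I\colon u$ simultaneously, and the arithmetic above shows it is.
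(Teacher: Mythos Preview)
Your proof is correct and follows essentially the same approach as the paper: both arguments set up a short exact sequence linking $I$, $I+(u)$, and a shift of $I\colon u$, pass to the long exact sequence in $\Tor$, and kill the flanking terms using the regularity bound $r=\reg(I\colon u)$. The only cosmetic difference is that the paper works with the quotient-ring sequence $0 \to (S/(I\colon u))(-\bm{\alpha}) \to S/I \to S/(I+(u)) \to 0$ and converts at the end via $\beta_{i,\mathbf{a}}(I)=\beta_{i+1,\mathbf{a}}(S/I)$, whereas you work directly at the level of ideals via $0 \to (I\colon u)(-\mathbf{b}) \to I \oplus S(-\mathbf{b}) \to I+(u) \to 0$; the underlying idea and the arithmetic with $|\mathbf{a}-\mathbf{b}|$ versus $r$ are identical.
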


\begin{proof}
Let $u= {\textbf{\rm \textbf{x}}}^{\bm \alpha}$, where $\bm\alpha\in \mathbb{Z}^n_{\geq 0}$. The short exact sequence 
\begin{equation*}
0 \to \frac{S}{I\colon u} (-{\bm \alpha}) \stackrel{.u}{\longrightarrow} \frac{S}{I} \longrightarrow \frac{S}{I+(u)} \to 0
\end{equation*}
of graded rings and homomorphisms induces the long exact sequence
\begin{equation} \label{long exact seq}
\begin{split}
\cdots \to \mathrm{Tor}_i^S \left( \mathbb{K}, \frac{S}{I \colon u} \right)_{\textbf{\rm \textbf{a}} - {\bm \alpha}} \longrightarrow \mathrm{Tor}_i^S \left( \mathbb{K}, \frac{S}{I} \right)_{\textbf{\rm \textbf{a}}} \longrightarrow \mathrm{Tor}_i^S \left( \mathbb{K}, \frac{S}{I + (u)} \right)_{\textbf{\rm \textbf{a}}}\\
\longrightarrow \mathrm{Tor}_{i-1}^S \left( \mathbb{K}, \frac{S}{I \colon u} \right)_{\textbf{\rm \textbf{a}} - {\bm \alpha}} \to  \cdots
\end{split}
\end{equation}
 for arbitrary $\textbf{\rm \textbf{a}} \in \mathbb{Z}^n$. 
%
If $|\textbf{\rm \textbf{a}}|>d+i+r-1$, then $|\textbf{\rm \textbf{a}} - \bm{\alpha}|>i +r-1$ and hence
$$\beta_{i-1, \textbf{\rm \textbf{a}} - {\bm \alpha}} \left( \frac{S}{I \colon u}  \right) = 0 = \beta_{i, \textbf{\rm \textbf{a}} - {\bm \alpha}} \left( \frac{S}{I \colon u} \right),$$
because $\mathrm{reg} \left( S/(I \colon u) \right) = \mathrm{reg} \left( I \colon u \right) -1 =r-1.$ It follows that
$$\mathrm{Tor}_{i-1}^S \left( \mathbb{K}, \frac{S}{I \colon u} \right)_{\textbf{\rm \textbf{a}} - {\bm \alpha}}  =  \mathrm{Tor}_i^S \left( \mathbb{K}, \frac{S}{I \colon u} \right)_{\textbf{\rm \textbf{a}} - {\bm \alpha}} =0,$$ 
and from the exact sequence (\ref{long exact seq}), we obtain $\mathrm{Tor}_i^S \left(\mathbb{K}, {S}/{I} \right)_{\textbf{\rm \textbf{a}}} \cong \mathrm{Tor}_i^S \left( \mathbb{K}, {S}/({I + (u)}) \right)_{\textbf{\rm \textbf{a}}}$ for all $\textbf{\rm \textbf{a}} \in \mathbb{Z}^n$ with $|\textbf{\rm \textbf{a}}|> d+i+r-1$. This implies that
$$\beta_{i, \textbf{\rm \textbf{a}}} (\frac{S}{I})= \dim_{\mathbb{K}} \mathrm{Tor}_i^S \left( \mathbb{K}, \frac{S}{I} \right)_{\textbf{\rm \textbf{a}}} = \dim_{\mathbb{K}} \mathrm{Tor}_i^S \left( \mathbb{K},  \frac{S}{I + (u)} \right)_{\textbf{\rm \textbf{a}}} = \beta_{i, \textbf{\rm \textbf{a}}} (\frac{S}{I+(u)}),$$
for all $\textbf{\rm \textbf{a}} \in \mathbb{Z}^n$ with $|\textbf{\rm \textbf{a}}|> d+i+r-1$. 
The assertion follows from the fact that $\beta_{i, \textbf{\rm \textbf{a}}} (I)= \beta_{i+1, \textbf{\rm \textbf{a}}} (S/I)$.
\end{proof}

As the following example shows,  in order to have the equality of multigraded Betti numbers in general, the lower bound for $|\textbf{\rm \textbf{a}}|$ in Theorem~\ref{easy to verify} is the best possible, even if $\mathrm{reg}(I \colon (u)) =1$, (i.e. $I \colon (u)$ is generated by a subset of variables).

\begin{ex} \label{example 1}
Let $I=(x_1x_4x_5,x_2x_3x_5)\subset \mathbb{K}[x_1,\ldots,x_5]$. Let $u=x_3x_4x_5$. Then $I\colon u=(x_1,x_2)$ and hence $\reg(I\colon u)=1$. Let 
$\textbf{\rm \textbf{a}}=(1,1,1,1,1)$. We have $|\textbf{\rm \textbf{a}}|=3+1+\reg(I\colon u)$. It is seen that $I+(u)$  has a $3$-linear resolution (hence $\beta_{1,\textbf{\rm \textbf{a}}}(I+(u))=0$) while $\beta_{1,\textbf{\rm \textbf{a}}} (I) =1$.  
\end{ex}

\subsection{Splittable monomial ideals}
A class of ideals whose Betti numbers can be computed through those of some of its subideals is the class of splittable ideals. Here we recall the definition.

For a monomial ideal $I \subset S$, let $\mathcal{G}(I)$ denote the (unique) set of minimal generators of $I$.

\begin{defn}[\cite{E-K}]
A monomial ideal $J$ is called  \textit{splittable} if $J$ is the sum of two non-zero monomial ideals $I$ and $K$, such that
\begin{itemize}
\item[(a)] $\mathcal{G}(J)$ is the disjoint union of $\mathcal{G}(I)$ and $\mathcal{G}(K)$;
\item[(b)] there is a splitting function
\begin{align*}
\mathcal{G}(I \cap K) & \to \mathcal{G}(I) \times \mathcal{G}(K) \\
w & \mapsto \left( \phi(w), \psi(w) \right)
\end{align*}
satisfying
\begin{enumerate}
\item for all $w \in \mathcal{G}(I \cap K)$, $w = \mathrm{lcm}\left( \phi(w), \psi(w) \right)$,
\item  for every subset $S \subseteq \mathcal{G}(I \cap K)$, both $\mathrm{lcm}(\phi(S))$ and $\mathrm{lcm}(\psi(S))$ strictly divide $\mathrm{lcm}(S)$.
\end{enumerate}
\end{itemize}
If $I$ and $K$ satisfy the above properties, then we say $J = I +K$ is a \textit{splitting} of $J$.
\end{defn}

\begin{defn}[{\cite{FHVt}}]
Let $I$, $J$, and $K$ be monomial ideals such that $\mathcal{G}(J)$ is the disjoint union of $\mathcal{G}(I)$ and $\mathcal{G}(K)$. Then $J = I + K$ is a \textit{Betti splitting} if
\begin{equation} \label{splittable ideals are betti splittable}
\beta_{i,j}(J) = \beta_{i,j}(I) + \beta_{i,j}(K) + \beta_{i-1,j}(I \cap K),
\end{equation}
for all $i \in \mathbb{Z}$ and (multi)degrees $j$.
\end{defn}

When $J = I +K$ is a splitting, Eliahou and Kervaire proved in \cite[Proposition 3.1]{E-K} that  $J = I +K$ is also Betti splittable.

Eliahou and Kervaire actually  proved (\ref{splittable ideals are betti splittable}) for total Betti numbers. Fatabbi \cite[Proposition 3.2]{Fatabi} extended the argument to
the graded case; in fact, her proof works as well if $j$ is a multidegree \cite{FHVt}.

\subsection{Simplicial complexes}
A \textit{simplicial complex} $\Delta$ on the vertex set $V=\{v_1, \ldots, v_n\}$ is a collection of subsets of $V$ such that $\{ v_i \} \in \Delta$  for all $i$ and, $F \in \Delta$ implies that all subsets of $F$ are also in $\Delta$. The elements of $\Delta$ are called
\textit{faces} and the maximal faces under inclusion are called \textit{facets} of $\Delta$. We denote by $\mathcal{F}(\Delta)$ the set of facets of $\Delta$. We say that a simplicial complex is {\em pure} if all its facets have the same cardinality. 
 The {\em dimension} of a face $F$ is $\dim F = |F|-1$, where $|F|$ denotes the cardinality of $F$. A simplicial complex is called {\em pure} if all its facets have the same dimension. 
The \textit{dimension} of $\Delta$, $\dim(\Delta)$, is defined as:
$$\dim (\Delta) = \max\{\dim F \colon F \in \Delta \}.$$

 A simplicial complex $\Gamma$ is called a {\em subcomplex} of $\Delta$ if all elements of $\Gamma$ belong to $\Delta$. If $W$ is a subset of $V$, the \textit{induced subcomplex} of $\Delta$ on $W$ is defined as $\Delta_W = \{ F \in \Delta \colon\; F \subseteq W\}$.

 Let $\Delta$ be a $d$-dimensional simplicial complex on $V$. For each $0\leq i\leq d$ the $i$-th {\em skeleton} of $\Delta$ is the simplicial complex $\Delta^{(i)}$ on $V$ whose faces are those faces $F$ of $\Delta$ with $\dim F \leq i$. In addition, we define  the {\em pure $i$-th skeleton} of $\Delta$ to be the pure subcomplex $\Delta^{[i]}$ of $\Delta$ whose facets are those faces $F$ of $\Delta$ with $\dim F = i$.

Given a simplicial complex $\Delta$ on the vertex set $\{v_{1}, \ldots, v_{n}\}$. For $F \subseteq \{v_{1}, \ldots, v_{n} \}$ let $\textbf{x}_F=\prod_{v_i \in F}{x_i}$, and let $\textbf{x}_\varnothing = 1$. 
The \emph{non-face ideal} or the \emph{Stanley-Reisner ideal} of $\Delta$, denoted by $I_\Delta$, is an ideal of $S$ generated by square-free monomials $\textbf{x}_F$, where $F \not\in \Delta$.

Let $\widetilde{H}_i \left( \Delta; \mathbb{K} \right)$ denote the $i$-th reduced homology of the augmented oriented chain complex of a simplicial complex $\Delta$ with  coefficients in the field $ \mathbb{K}$. If $\Delta$ is a simplicial complex and $\Delta_1$ and $\Delta_2$ are subcomplexes of $\Delta$, then there is an exact sequence
\begin{align} \label{Reduced Mayer-Vietoris sequence}
\cdots \to \widetilde{H}_j(\Delta_1 \cap \Delta_2 ;  \mathbb{K}) \to \widetilde{H}_j(\Delta_1;  \mathbb{K}) \oplus \widetilde{H}_j(\Delta_2;  \mathbb{K}) \to  \widetilde{H}_j(\Delta_1 \cup \Delta_2 ;  \mathbb{K}) \to \nonumber \\
\to  \widetilde{H}_{j-1}(\Delta_1 \cap \Delta_2;  \mathbb{K}) \to \cdots
\end{align}
 called the \textit{reduced Mayer-Vietoris sequence} of $\Delta_1$ and $\Delta_2$ (see \cite[Proposition 5.1.8]{HHBook} for more details). This exact sequence plays a crucial role in the proof of Proposition~\ref{preliminary statement} and Theorem~\ref{inequality of multigraded betti numbers-thm}.

Let $\Delta_1$ and $\Delta_2$ be simplicial complexes on disjoint vertex sets $V$ and $W$, respectively. The \textit{join} $\Delta_1 * \Delta_2$ is a simplicial complex on the vertex set $V \cup W$ with faces $F \cup G$, where $F \in \Delta_1$ and $G \in \Delta_2$. A {\em simplex} on the vertex set $V \neq \varnothing$ is the join of all  vertices in $V$ and it follows from \cite[Proposition 5.2.5]{Vil} that, if either $\Delta_1$ or $\Delta_2$ is a simplex, then $\widetilde{H}_j \left( \Delta_1 * \Delta_2; \mathbb{K} \right) = 0$ for all $j$.

\subsection{Clutters and their associated ideal}
In this section we recall some definitions about clutters and their associated ideal in the polynomial ring. 
\begin{defn}[Clutter] \label{SC} 
A \textit{clutter} $\mathcal{C}$ with the vertex set $[n]=\{ 1, \ldots, n\}$ is a collection of subsets of $[n]$, called \textit{circuits} of $\mathcal{C}$, such that if $F_1$ and $F_2$ are distinct circuits, then $F_1 \nsubseteq F_2$.
A \textit{$d$-circuit} is a circuit consisting of exactly $d$ vertices, and a clutter is called \textit{$d$-uniform}, if every circuit has $d$ vertices.
\end{defn}

For a non-empty clutter $\mathcal{C}$ on the vertex set $[n]$, we define the ideal $I \left( \mathcal{C} \right)$ to be
$$I(\mathcal{C}) = \left(  \textbf{x}_T \colon \quad T \in \mathcal{C} \right),$$
and we  set $I(\varnothing) = 0$. The ideal $I \left( {\mathcal{C}} \right)$ is called the \textit{circuit ideal} of $\mathcal{C}$. 

Let $n$, $d$ be positive integers and let $V$ be a set consisting $n$ elements. For $n \geq d$, by  $\mathcal{C}_{n,d}$ on the vertex set $V$  we mean the clutter
$$\mathcal{C}_{n,d} = \left\{ F \subset V \colon \quad |F|=d \right\}.$$
This clutter is called  the \textit{complete $d$-uniform clutter} on $V$ with $n$ vertices.
In the case that $n<d$, we let $\mathcal{C}_{n,d}$ be some isolated points. It is well-known that for $n \geq d$ the ideal $I \left( \mathcal{C}_{n,d} \right)$ has a $d$-linear resolution (see e.g. \cite[Example 2.12]{MNYZ}). 

If $\mathcal{C}$ is a $d$-uniform clutter on $[n]$, we define $\bar{\mathcal{C}}$, the \textit{complement} of $\mathcal{C}$, to be
\begin{equation*}
\bar{\mathcal{C}} = \mathcal{C}_{n,d} \setminus \mathcal{C} = \{F \subset [n] \colon \quad |F|=d, \,F \notin \mathcal{C}\}.
\end{equation*}

Frequently in this paper, we take a $d$-uniform clutter $\mathcal{C} \neq \mathcal{C}_{n,d}$ with vertex set $[n]$ and consider the square-free monomial ideal $I=I(\bar{\mathcal{C}})$ in the polynomial ring $S=\mathbb{\mathbb{K}}[x_1, \ldots, x_n]$. 

Let $\mathcal{C}$ be a clutter and let $e$ be a subset of $[n]$. By $\mathcal{C} \setminus e$ we mean the clutter 
$$ \left\{F \in \mathcal{C} \colon \ e \nsubseteq F \right\}.$$
This operation is called the \textit{deletion} of $e$ from $\mathcal{C}$.

\subsection{Chordal clutters}
In the following we recall some definitions and concepts from \cite{BYZ}.


Let $\mathcal{C}$ be a $d$-uniform clutter on the vertex set $[n]$ and let $\Delta (\C)$ be the simplicial complex on the vertex set $[n]$ with $I_{\Delta (\C)} = I \left( \bar{\C} \right)$. The simplicial complex $\Delta (\C )$ is called the \textit{clique complex} of $\C$ and a face $F \in \Delta (\C)$ is called a \textit{clique} in $\C$. It is easily seen that $F \subset [n]$ is a clique in $\C$ if and only if either $|F|<d$ or else all $d$-subsets of $F$ belongs to $\C$ (see \cite[Definition 4.2 and Proposition 4.4]{MAR}).

Let $\mathcal{C}$ be a $d$-uniform clutter on the vertex set $[n]$. For any $(d-1)$-subset $e$ of $[n]$, let
\begin{equation*}
{N}_{\mathcal{C}} \left[ e \right] = e \cup \lbrace c \in \left[ n \right] \colon \quad e \cup \lbrace c \rbrace \in \mathcal{C} \rbrace.
\end{equation*}
We call ${N}_{\mathcal{C}} \left[ e \right]$ the \textit{closed neighborhood} of $e$ in $\mathcal{C}$.  In the case that $ N_{\mathcal{C}} \left[ e \right]\neq e$ (i.e. $e \subset F$ for some $F \in \mathcal{C}$), $e$ is called a \textit{maximal subcircuit} of $\mathcal{C}$. The set of all maximal subcircuits of $\mathcal{C}$ is denoted by ${\rm SC}\left(\mathcal{C}\right)$. We say that $e$ is \textit{simplicial} over $\mathcal{C}$ if ${N}_{\mathcal{C}} \left[ e \right] \in \Delta(\mathcal{C})$. One may note that a $(d-1)$-subset of $[n]$ which is not a maximal subcircuit of $\C$ is simplicial over $\mathcal{C}$, called trivial simplicial elements. If $e \in \mathrm{SC} \left(\mathcal{C}\right)$ and  $e$ is simplicial over $\mathcal{C}$, then $e$ is called a \textit{simplicial maximal subcircuit} of $\mathcal{C}$. Let us denote by $\mathrm{Simp} \left( \mathcal{C} \right)$ the set of all $(d-1)$-subsets of $[n]$ which are simplicial over $\mathcal{C}$.

\begin{defn}[{\cite[Definition 3.1]{BYZ}}] \label{def of chordal}
Let $\mathcal{C}$ be a $d$-uniform clutter. We call $\mathcal{C}$ a \textit{chordal clutter}, if either $\mathcal{C}=\varnothing$, or $\mathcal{C}$ admits a simplicial maximal subcircuit $e$ such that $\mathcal{C}\setminus e$ is chordal.
\end{defn}
Following the notation in \cite{BYZ}, we use $\mathfrak{C}_d$, to denote the class of all $d$-uniform chordal clutters.

Let $\mathcal{C}$ be a $d$-uniform clutter. A sequence $\eb= e_1, \ldots, e_t$ of  $(d-1)$-subsets of $[n]$, is called a \textit{simplicial sequence} in  $\mathcal{C}$ if $e_1$ is  simplicial over $\mathcal{C}$ and $e_i$ is simplicial over $\left( \left( \left( \mathcal{C} \setminus e_{1} \right) \setminus e_2 \right)  \setminus \cdots \right) \setminus e_{i-1}$ for all $i>1$.

One may rewrite the Definition~\ref{def of chordal} as follows: A $d$-uniform clutter $\mathcal{C}$ is chordal if either $\mathcal{C} = \varnothing$, or else there exists a simplicial sequence in  $\mathcal{C}$, say $\eb=e_1, \ldots, e_t$, such that $e_1$ is a maximal subcircuit over $\C$, $e_i$ is a maximal subcircuit over $\C \setminus e_1 \setminus \cdots \setminus e_{i-1}$ for $i>1$, and $\left( \left( \left(  \mathcal{C} \setminus e_{1} \right) \setminus e_2 \right) \setminus \cdots \right) \setminus e_{t} = \varnothing$.  The sequence $\eb$ is called a {\em simplicial order} of $\mathcal{C}$.


In \cite[Remark~2]{BYZ} it is mentioned that Definition~\ref{def of chordal} coincides with the graph theoretical definition of chordal graphs in the case $d=2$.

\begin{ex}
In Figure~\ref{F2}, the $3$-uniform clutter $\mathcal{C}$ is chordal, while  the $3$-uniform clutter $\mathcal{D}$ is not.
	\begin{align*}
	&\mathcal{C}=\{ \{ 1,2,3 \}, \{ 1,2,4 \}, \{ 1,3,4 \}, \{ 2,3,4 \}, \{ 1,2,5 \},
	\{ 1,2,6 \}, \{ 1,5,6 \}, \{ 2,5,6 \} \}. \\
	&\mathcal{D}=\{\{1,2,3\}, \{1,2,4\}, \{1,3,4\},\{2,3,5\}, \{2,4,5\}, \{3,4,5\}\}.
	\end{align*}
	\begin{figure}[ht!]
		\begin{tikzpicture}[line cap=round,line join=round,>=triangle 45,x=0.7cm,y=0.7cm]
		\clip(5.26,2.0) rectangle (12.44,6.8);
		\fill[fill=black,fill opacity=0.1] (8.8,5.94) -- (6.2,4.5) -- (7.66,3.74) -- cycle;
		\fill[fill=black,fill opacity=0.1] (8.8,5.94) -- (8.8,4.48) -- (7.66,3.74) -- cycle;
		\fill[fill=black,fill opacity=0.1] (8.8,5.94) -- (11.46,4.48) -- (9.98,3.74) -- cycle;
		\fill[fill=black,fill opacity=0.1] (8.8,5.94) -- (8.8,4.48) -- (9.98,3.74) -- cycle;
		\draw (8.8,5.94)-- (6.2,4.5);
		\draw (6.2,4.5)-- (7.66,3.74);
		\draw (7.66,3.74)-- (8.8,5.94);
		\draw (8.8,5.94)-- (8.8,4.48);
		\draw (8.8,4.48)-- (7.66,3.74);
		\draw (7.66,3.74)-- (8.8,5.94);
		\draw (8.8,5.94)-- (11.46,4.48);
		\draw (11.46,4.48)-- (9.98,3.74);
		\draw (9.98,3.74)-- (8.8,5.94);
		\draw (8.8,5.94)-- (8.8,4.48);
		\draw (8.8,4.48)-- (9.98,3.74);
		\draw (9.98,3.74)-- (8.8,5.94);
		\draw [dash pattern=on 2pt off 2pt] (6.2,4.5)-- (11.46,4.48);
		\draw (8.52,6.58) node[anchor=north west] {\begin{scriptsize}1\end{scriptsize}};
		\draw (8.44,4.50) node[anchor=north west] {\begin{scriptsize}2\end{scriptsize}};
		\draw (5.62,4.83) node[anchor=north west] {\begin{scriptsize}3\end{scriptsize}};
		\draw (7.4,3.8) node[anchor=north west] {\begin{scriptsize}4\end{scriptsize}};
		\draw (11.38,4.82) node[anchor=north west] {\begin{scriptsize}5\end{scriptsize}};
		\draw (9.74,3.8) node[anchor=north west] {\begin{scriptsize}6\end{scriptsize}};
		\begin{scriptsize}
		\fill [color=black] (8.8,5.94) circle (1.3pt);
		\fill [color=black] (6.2,4.5) circle (1.3pt);
		\fill [color=black] (7.66,3.74) circle (1.3pt);
		\fill [color=black] (8.8,4.48) circle (1.3pt);
		\fill [color=black] (11.46,4.48) circle (1.3pt);
		\fill [color=black] (9.98,3.74) circle (1.3pt);
		\end{scriptsize}
		\end{tikzpicture}
		\quad
		\quad
		\quad
		\begin{tikzpicture}[line cap=round,line join=round,>=triangle 45,x=.7cm,y=.7cm]
		\clip(13.89,2.00) rectangle (18.76,6.8);
		\fill[fill=black,fill opacity=0.1] (16.24,6.16) -- (15.14,4.48) -- (16.7,4.06) -- cycle;
		\fill[fill=black,fill opacity=0.1] (16.24,6.16) -- (17.34,4.68) -- (16.7,4.06) -- cycle;
		\fill[fill=black,fill opacity=0.1] (15.14,4.48) -- (16.28,2.62) -- (16.7,4.06) -- cycle;
		\fill[fill=black,fill opacity=0.1] (17.34,4.68) -- (16.28,2.62) -- (16.7,4.06) -- cycle;
		\draw (16.24,6.16)-- (15.14,4.48);
		\draw (15.14,4.48)-- (16.7,4.06);
		\draw (16.7,4.06)-- (16.24,6.16);
		\draw (16.24,6.16)-- (17.34,4.68);
		\draw (17.34,4.68)-- (16.7,4.06);
		\draw (16.7,4.06)-- (16.24,6.16);
		\draw [dash pattern=on 3pt off 3pt] (15.14,4.48)-- (17.34,4.68);
		\draw (15.14,4.48)-- (16.28,2.62);
		\draw (16.28,2.62)-- (16.7,4.06);
		\draw (16.7,4.06)-- (15.14,4.48);
		\draw (17.34,4.68)-- (16.28,2.62);
		\draw (16.28,2.62)-- (16.7,4.06);
		\draw (16.7,4.06)-- (17.34,4.68);
		\draw (15.95,6.83) node[anchor=north west] {\begin{scriptsize}1\end{scriptsize}};
		\draw (17.31,5) node[anchor=north west] {\begin{scriptsize}2\end{scriptsize}};
		\draw (14.55,4.8) node[anchor=north west] {\begin{scriptsize}3\end{scriptsize}};
		\draw (16.07,4.17) node[anchor=north west] {\begin{scriptsize}4\end{scriptsize}};
		\draw (15.95,2.62) node[anchor=north west] {\begin{scriptsize}5\end{scriptsize}};
		\begin{scriptsize}
		\fill [color=black] (16.24,6.16) circle (1.3pt);
		\fill [color=black] (15.14,4.48) circle (1.3pt);
		\fill [color=black] (16.7,4.06) circle (1.3pt);
		\fill [color=black] (17.34,4.68) circle (1.3pt);
		\fill [color=black] (16.28,2.62) circle (1.3pt);
		\end{scriptsize}
		\end{tikzpicture}
		\caption{The Clutter $\mathcal{C}$ on the left and $\mathcal{D}$ on the right}
		\label{F2}
	\end{figure}
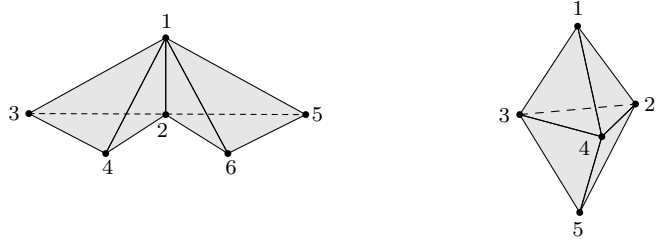

Indeed, while the clutter $\mathcal{D}$ does not have any simplicial maximal subcircuit, one of the possible simplicial orders for $\C$ is the following:
\begin{center}
\begin{tabular}{lll}
$e_1 = \{ 1, 3 \}$ & $e_2 = \{ 1, 4 \}$ & $e_3 = \{ 2, 4 \}$\\
$e_4 = \{ 1, 2 \}$ & $e_5 = \{ 2, 6 \}$ & $e_6 = \{ 1, 5 \}$\\
\end{tabular}
\end{center}
\end{ex}

\section{Simplicial subclutters and their multigraded resolution} \label{Main}
In this section, we consider a class of subclutters, called simplicial subclutters, which have the property that all the (multi)graded Betti numbers of their associated ideals  can be determined via those of the ideal of the original clutter.

\begin{defn} \label{Definition of simplicial subclutter}
Let $\mathcal{C}$ be a $d$-uniform clutter on the vertex set $[n]$ and let  $\mathcal{D} \subsetneq \mathcal{C}$. We say that $\mathcal{D}$ is a \textit{simplicial subclutter} of $\mathcal{C}$ if there exists a sequence of $(d-1)$-subsets of $[n]$ (not necessarily distinct), say $\textbf{e}= e_1, \ldots, e_t$, and $A_i \subseteq \left\{ F \in \mathcal{C}\setminus A_1\setminus \cdots\setminus A_{i-1} \colon \; e_i \subset F \right\}$, $i=1, \ldots, t$, such that 
\begin{itemize}
\item[(a)] $e_1$ is simplicial over $\mathcal{C}$;
\item[(b)] $e_i$ is simplicial over $\mathcal{C} \setminus A_1 \setminus \cdots \setminus A_{i-1}$, for $i>1$;
\item[(c)] $\mathcal{D} = \mathcal{C} \setminus A_1 \setminus \cdots \setminus A_t$.
\end{itemize}
\end{defn}

\begin{ex}
Let $G$ be the graph illustrated in Figure~\ref{graph0}. Let $e_1=\{1\}$. We have $\mathrm{N}_{G}[e_1]=\{1,2,3,5\}$. Any $2$-subset of this set forms an edge in $G$. Hence,  $N_G[e_1]\in \Delta(G)$ and so $\{1\}$ is a simplicial maximal subcircuit of $G$, called simplicial vertex of $G$. Let $A_1=\{\{1,3\},\{1,5\}\}$, and let $G_1=G\setminus A_1$.
	
Now let $e_2=\{6\}$. Since $\mathrm{N}_{G_1}[e_2]=\{5,6,7\}\in \Delta(G_1)$, the vertex $6$ is a simplicial vertex of $G_1$. Let $A_2=\{\{5,6\},\{5,7\}\}$ and let $G_2= G_1\setminus A_2$. Finally, let $e_3=\{8\}$. We have $\mathrm{N}_{G_2}[e_3]=\{7,8\}\in \Delta(G_2)$. So $8$ is a simplicial vertex of $G_2$.  Let $A_3=\{\{7,8\}\}$ and let $G_3= G_2\setminus A_3$. 
The graph $G_i$ is a simplicial subgraph of $G_j$ for $j<i$ and they all are simplicial subgraphs of $G$. All these graphs are shown in Figure~\ref{graph0}.
\begin{figure}[ht!]
	\begin{center}
		\begin{tikzpicture}[line cap=round,line join=round,>=triangle 45,x=0.6cm,y=0.6cm]
		\clip(2.5,2.5) rectangle (24,8.5);
		\fill[line width=0.8pt,fill=white,fill opacity=0.10000000149011612] (5.,8.) -- (3.12,6.66) -- (3.8134637824106052,4.4579309769026825) -- (6.1220479699074195,4.436977475055262) -- (6.855367681260407,6.626096521827541) -- cycle;
		\fill[line width=0.8pt,fill=white,fill opacity=0.10000000149011612] (6.855367681260407,6.626096521827541) -- (9.,8.) -- (9.04,5.64) -- cycle;
		\draw [line width=0.8pt] (5.,8.)-- (3.12,6.66);
		\draw [line width=0.8pt] (3.12,6.66)-- (3.8134637824106052,4.4579309769026825);
		\draw [line width=0.8pt] (3.8134637824106052,4.4579309769026825)-- (6.1220479699074195,4.436977475055262);
		\draw [line width=0.8pt] (6.1220479699074195,4.436977475055262)-- (6.855367681260407,6.626096521827541);
		\draw [line width=0.8pt] (6.855367681260407,6.626096521827541)-- (5.,8.);
		\draw [line width=0.8pt] (6.855367681260407,6.626096521827541)-- (9.,8.);
		\draw [line width=0.8pt] (9.,8.)-- (9.04,5.64);
		\draw [line width=0.8pt] (9.04,5.64)-- (6.855367681260407,6.626096521827541);
		\draw [line width=0.8pt] (5.,8.)-- (3.8134637824106052,4.4579309769026825);
		\draw [line width=0.8pt] (6.855367681260407,6.626096521827541)-- (3.8134637824106052,4.4579309769026825);
		\draw [line width=0.8pt] (6.855367681260407,6.626096521827541)-- (3.12,6.66);
		\draw [line width=0.8pt] (9.04,5.64)-- (10.94,4.42);
		\draw [line width=0.8pt] (9.04,5.64)-- (7.3,4.42);
		\draw (4.65,8.7) node[anchor=north west] {\begin{scriptsize}1\end{scriptsize}};
		\draw (2.55,7.05) node[anchor=north west] {\begin{scriptsize}2\end{scriptsize}};
		\draw (3.45,4.5) node[anchor=north west] {\begin{scriptsize}3\end{scriptsize}};
		\draw (5.75,4.5) node[anchor=north west] {\begin{scriptsize}4\end{scriptsize}};
		\draw (6.66,6.6) node[anchor=north west] {\begin{scriptsize}5\end{scriptsize}};
		\draw (9.04,6.1) node[anchor=north west] {\begin{scriptsize}7\end{scriptsize}};
		\draw (8.72,8.7) node[anchor=north west] {\begin{scriptsize}6\end{scriptsize}};
		\draw (7.,4.45) node[anchor=north west] {\begin{scriptsize}8\end{scriptsize}};
		\draw (10.68,4.45) node[anchor=north west] {\begin{scriptsize}9\end{scriptsize}};
		\draw (5.2,3.5) node[anchor=north west] {Graph $G$};
		\begin{scriptsize}
		\draw [fill=black] (5.,8.) circle (1.5pt);
		\draw [fill=black] (3.12,6.66) circle (1.5pt);
		\draw [fill=black] (3.8134637824106052,4.4579309769026825) circle (1.5pt);
		\draw [fill=black] (6.1220479699074195,4.436977475055262) circle (1.5pt);
		\draw [fill=black] (6.855367681260407,6.626096521827541) circle (1.5pt);
		\draw [fill=black] (9.,8.) circle (1.5pt);
		\draw [fill=black] (9.04,5.64) circle (1.5pt);
		\draw [fill=black] (10.94,4.42) circle (1.5pt);
		\draw [fill=black] (7.3,4.42) circle (1.5pt);
		\end{scriptsize}
		\hspace{7.5cm}
		\fill[line width=0.8pt,fill=white,fill opacity=0.10000000149011612] (5.,8.) -- (3.12,6.66) -- (3.8134637824106052,4.4579309769026825) -- (6.1220479699074195,4.436977475055262) -- (6.855367681260407,6.626096521827541) -- cycle;
		\fill[line width=0.8pt,fill=white,fill opacity=0.10000000149011612] (6.855367681260407,6.626096521827541) -- (9.,8.) -- (9.04,5.64) -- cycle;
		\draw [line width=0.8pt] (5.,8.)-- (3.12,6.66);
		\draw [line width=0.8pt] (3.12,6.66)-- (3.8134637824106052,4.4579309769026825);
		\draw [line width=0.8pt] (3.8134637824106052,4.4579309769026825)-- (6.1220479699074195,4.436977475055262);
		\draw [line width=0.8pt] (6.1220479699074195,4.436977475055262)-- (6.855367681260407,6.626096521827541);
		\draw [line width=0.8pt] (6.855367681260407,6.626096521827541)-- (9.,8.);
		\draw [line width=0.8pt] (9.,8.)-- (9.04,5.64);
		\draw [line width=0.8pt] (9.04,5.64)-- (6.855367681260407,6.626096521827541);
		\draw [line width=0.8pt] (6.855367681260407,6.626096521827541)-- (3.8134637824106052,4.4579309769026825);
		\draw [line width=0.8pt] (6.855367681260407,6.626096521827541)-- (3.12,6.66);
		\draw [line width=0.8pt] (9.04,5.64)-- (10.94,4.42);
		\draw [line width=0.8pt] (9.04,5.64)-- (7.3,4.42);
		\draw (4.65,8.7) node[anchor=north west] {\begin{scriptsize}1\end{scriptsize}};
		\draw (2.55,7.05) node[anchor=north west] {\begin{scriptsize}2\end{scriptsize}};
		\draw (3.45,4.5) node[anchor=north west] {\begin{scriptsize}3\end{scriptsize}};
		\draw (5.75,4.5) node[anchor=north west] {\begin{scriptsize}4\end{scriptsize}};
		\draw (6.66,6.6) node[anchor=north west] {\begin{scriptsize}5\end{scriptsize}};
		\draw (9.04,6.1) node[anchor=north west] {\begin{scriptsize}7\end{scriptsize}};
		\draw (8.72,8.7) node[anchor=north west] {\begin{scriptsize}6\end{scriptsize}};
		\draw (7,4.45) node[anchor=north west] {\begin{scriptsize}8\end{scriptsize}};
		\draw (10.68,4.45) node[anchor=north west] {\begin{scriptsize}9\end{scriptsize}};
		\draw (5.2,3.5) node[anchor=north west] {Graph $G_1$};
		\begin{scriptsize}
		\draw [fill=black] (5.,8.) circle (1.5pt);
		\draw [fill=black] (3.12,6.66) circle (1.5pt);
		\draw [fill=black] (3.8134637824106052,4.4579309769026825) circle (1.5pt);
		\draw [fill=black] (6.1220479699074195,4.436977475055262) circle (1.5pt);
		\draw [fill=black] (6.855367681260407,6.626096521827541) circle (1.5pt);
		\draw [fill=black] (9.,8.) circle (1.5pt);
		\draw [fill=black] (9.04,5.64) circle (1.5pt);
		\draw [fill=black] (10.94,4.42) circle (1.5pt);
		\draw [fill=black] (7.3,4.42) circle (1.5pt);
		\end{scriptsize}
		\end{tikzpicture}

		\begin{tikzpicture}[line cap=round,line join=round,>=triangle 45,x=0.6cm,y=0.6cm]
		\clip(2.5,2.5) rectangle (24,8.5);
		\fill[line width=0.8pt,fill=white,fill opacity=0.10000000149011612] (5.,8.) -- (3.12,6.66) -- (3.8134637824106052,4.4579309769026825) -- (6.1220479699074195,4.436977475055262) -- (6.855367681260407,6.626096521827541) -- cycle;
		\fill[line width=0.8pt,fill=white,fill opacity=0.10000000149011612] (6.855367681260407,6.626096521827541) -- (9.,8.) -- (9.04,5.64) -- cycle;
		\draw [line width=0.8pt] (5.,8.)-- (3.12,6.66);
		\draw [line width=0.8pt] (3.12,6.66)-- (3.8134637824106052,4.4579309769026825);
		\draw [line width=0.8pt] (3.8134637824106052,4.4579309769026825)-- (6.1220479699074195,4.436977475055262);
		\draw [line width=0.8pt] (6.1220479699074195,4.436977475055262)-- (6.855367681260407,6.626096521827541);
		\draw [line width=0.8pt] (9.04,5.64)-- (6.855367681260407,6.626096521827541);
		\draw [line width=0.8pt] (6.855367681260407,6.626096521827541)-- (3.8134637824106052,4.4579309769026825);
		\draw [line width=0.8pt] (6.855367681260407,6.626096521827541)-- (3.12,6.66);
		\draw [line width=0.8pt] (9.04,5.64)-- (10.94,4.42);
		\draw [line width=0.8pt] (9.04,5.64)-- (7.3,4.42);
		\draw (4.65,8.7) node[anchor=north west] {\begin{scriptsize}1\end{scriptsize}};
		\draw (2.55,7.05) node[anchor=north west] {\begin{scriptsize}2\end{scriptsize}};
		\draw (3.45,4.5) node[anchor=north west] {\begin{scriptsize}3\end{scriptsize}};
		\draw (5.75,4.5) node[anchor=north west] {\begin{scriptsize}4\end{scriptsize}};
		\draw (6.66,6.6) node[anchor=north west] {\begin{scriptsize}5\end{scriptsize}};
		\draw (9.04,6.1) node[anchor=north west] {\begin{scriptsize}7\end{scriptsize}};
		\draw (8.72,8.7) node[anchor=north west] {\begin{scriptsize}6\end{scriptsize}};
		\draw (7,4.45) node[anchor=north west] {\begin{scriptsize}8\end{scriptsize}};
		\draw (10.68,4.45) node[anchor=north west] {\begin{scriptsize}9\end{scriptsize}};
		\draw (5.2,3.5) node[anchor=north west] {Graph $G_2$};
		\begin{scriptsize}
		\draw [fill=black] (5.,8.) circle (1.5pt);
		\draw [fill=black] (3.12,6.66) circle (1.5pt);
		\draw [fill=black] (3.8134637824106052,4.4579309769026825) circle (1.5pt);
		\draw [fill=black] (6.1220479699074195,4.436977475055262) circle (1.5pt);
		\draw [fill=black] (6.855367681260407,6.626096521827541) circle (1.5pt);
		\draw [fill=black] (9.,8.) circle (1.5pt);
		\draw [fill=black] (9.04,5.64) circle (1.5pt);
		\draw [fill=black] (10.94,4.42) circle (1.5pt);
		\draw [fill=black] (7.3,4.42) circle (1.5pt);
		\end{scriptsize}
		\hspace{7.5cm}
		\fill[line width=0.8pt,fill=white,fill opacity=0.10000000149011612] (5.,8.) -- (3.12,6.66) -- (3.8134637824106052,4.4579309769026825) -- (6.1220479699074195,4.436977475055262) -- (6.855367681260407,6.626096521827541) -- cycle;
		\fill[line width=0.8pt,fill=white,fill opacity=0.10000000149011612] (6.855367681260407,6.626096521827541) -- (9.,8.) -- (9.04,5.64) -- cycle;
		\draw [line width=0.8pt] (5.,8.)-- (3.12,6.66);
		\draw [line width=0.8pt] (3.12,6.66)-- (3.8134637824106052,4.4579309769026825);
		\draw [line width=0.8pt] (3.8134637824106052,4.4579309769026825)-- (6.1220479699074195,4.436977475055262);
		\draw [line width=0.8pt] (6.1220479699074195,4.436977475055262)-- (6.855367681260407,6.626096521827541);
		\draw [line width=0.8pt] (9.04,5.64)-- (6.855367681260407,6.626096521827541);
		\draw [line width=0.8pt] (6.855367681260407,6.626096521827541)-- (3.8134637824106052,4.4579309769026825);
		\draw [line width=0.8pt] (6.855367681260407,6.626096521827541)-- (3.12,6.66);
		\draw [line width=0.8pt] (9.04,5.64)-- (10.94,4.42);
		\draw (4.65,8.7) node[anchor=north west] {\begin{scriptsize}1\end{scriptsize}};
		\draw (2.55,7.05) node[anchor=north west] {\begin{scriptsize}2\end{scriptsize}};
		\draw (3.45,4.5) node[anchor=north west] {\begin{scriptsize}3\end{scriptsize}};
		\draw (5.75,4.5) node[anchor=north west] {\begin{scriptsize}4\end{scriptsize}};
		\draw (6.66,6.6) node[anchor=north west] {\begin{scriptsize}5\end{scriptsize}};
		\draw (9.04,6.1) node[anchor=north west] {\begin{scriptsize}7\end{scriptsize}};
		\draw (8.72,8.7) node[anchor=north west] {\begin{scriptsize}6\end{scriptsize}};
		\draw (7,4.45) node[anchor=north west] {\begin{scriptsize}8\end{scriptsize}};
		\draw (10.68,4.45) node[anchor=north west] {\begin{scriptsize}9\end{scriptsize}};
		\draw (5.2,3.5) node[anchor=north west] {Graph $G_3$};
		\begin{scriptsize}
		\draw [fill=black] (5.,8.) circle (1.5pt);
		\draw [fill=black] (3.12,6.66) circle (1.5pt);
		\draw [fill=black] (3.8134637824106052,4.4579309769026825) circle (1.5pt);
		\draw [fill=black] (6.1220479699074195,4.436977475055262) circle (1.5pt);
		\draw [fill=black] (6.855367681260407,6.626096521827541) circle (1.5pt);
		\draw [fill=black] (9.,8.) circle (1.5pt);
		\draw [fill=black] (9.04,5.64) circle (1.5pt);
		\draw [fill=black] (10.94,4.42) circle (1.5pt);
		\draw [fill=black] (7.3,4.42) circle (1.5pt);
		\end{scriptsize}
		\end{tikzpicture}
		\caption{The graph $G$ and three of its simplicial subgraphs}
		\label{graph0}
	\end{center}
\end{figure}
One can continue this process by removing some edges containing simplicial vertices in order to have more simplicial subgraphs of $G$.

But the subgraph $G'$ of $G$ shown in Figure~\ref{G''} is not a simplicial subgraph of $G$, because $G'$ is obtained from $G$ by removing the edge $\{5,7\}$ while none of the vertices $5, 7$ is simplicial.

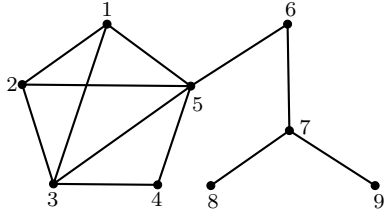
\begin{figure}[ht!]
	\begin{center}
		\begin{tikzpicture}[line cap=round,line join=round,>=triangle 45,x=0.6cm,y=0.6cm]
		\clip(2.5,3.9) rectangle (11.7,8.5);
		\fill[line width=0.8pt,fill=white,fill opacity=0.10000000149011612] (5.,8.) -- (3.12,6.66) -- (3.8134637824106052,4.4579309769026825) -- (6.1220479699074195,4.436977475055262) -- (6.855367681260407,6.626096521827541) -- cycle;
		\fill[line width=0.8pt,fill=white,fill opacity=0.10000000149011612] (6.855367681260407,6.626096521827541) -- (9.,8.) -- (9.04,5.64) -- cycle;
		\draw [line width=0.8pt] (5.,8.)-- (3.12,6.66);
		\draw [line width=0.8pt] (3.12,6.66)-- (3.8134637824106052,4.4579309769026825);
		\draw [line width=0.8pt] (3.8134637824106052,4.4579309769026825)-- (6.1220479699074195,4.436977475055262);
		\draw [line width=0.8pt] (6.1220479699074195,4.436977475055262)-- (6.855367681260407,6.626096521827541);
		\draw [line width=0.8pt] (6.855367681260407,6.626096521827541)-- (5.,8.);
		\draw [line width=0.8pt] (6.855367681260407,6.626096521827541)-- (9.,8.);
		\draw [line width=0.8pt] (9.,8.)-- (9.04,5.64);
		\draw [line width=0.8pt] (5.,8.)-- (3.8134637824106052,4.4579309769026825);
		\draw [line width=0.8pt] (6.855367681260407,6.626096521827541)-- (3.8134637824106052,4.4579309769026825);
		\draw [line width=0.8pt] (6.855367681260407,6.626096521827541)-- (3.12,6.66);
		\draw [line width=0.8pt] (9.04,5.64)-- (10.94,4.42);
		\draw [line width=0.8pt] (9.04,5.64)-- (7.3,4.42);
		\draw (4.65,8.7) node[anchor=north west] {\begin{scriptsize}1\end{scriptsize}};
		\draw (2.55,7.05) node[anchor=north west] {\begin{scriptsize}2\end{scriptsize}};
		\draw (3.45,4.5) node[anchor=north west] {\begin{scriptsize}3\end{scriptsize}};
		\draw (5.75,4.5) node[anchor=north west] {\begin{scriptsize}4\end{scriptsize}};
		\draw (6.66,6.6) node[anchor=north west] {\begin{scriptsize}5\end{scriptsize}};
		\draw (9.04,6.1) node[anchor=north west] {\begin{scriptsize}7\end{scriptsize}};
		\draw (8.72,8.7) node[anchor=north west] {\begin{scriptsize}6\end{scriptsize}};
		\draw (7,4.45) node[anchor=north west] {\begin{scriptsize}8\end{scriptsize}};
		\draw (10.68,4.45) node[anchor=north west] {\begin{scriptsize}9\end{scriptsize}};
		\begin{scriptsize}
		\draw [fill=black] (5.,8.) circle (1.5pt);
		\draw [fill=black] (3.12,6.66) circle (1.5pt);
		\draw [fill=black] (3.8134637824106052,4.4579309769026825) circle (1.5pt);
		\draw [fill=black] (6.1220479699074195,4.436977475055262) circle (1.5pt);
		\draw [fill=black] (6.855367681260407,6.626096521827541) circle (1.5pt);
		\draw [fill=black] (9.,8.) circle (1.5pt);
		\draw [fill=black] (9.04,5.64) circle (1.5pt);
		\draw [fill=black] (10.94,4.42) circle (1.5pt);
		\draw [fill=black] (7.3,4.42) circle (1.5pt);
		\end{scriptsize}
		\end{tikzpicture}
		\caption{The graph $G'$ which is not a simplicial subgraph of $G$}
		\label{G''}
	\end{center}
\end{figure}
\end{ex}

\begin{rem}
One may easily check that a simplicial subgraph of a chordal graph is again a chordal graph. However, it is not known that whether any simplicial subclutter of a chordal clutter is again chordal.
\end{rem}

\subsection{Multigraded minimal free resolutions of simplicial subcluttres}
Let $\mathcal{D}$ be a simplicial subclutter of a $d$-uniform clutter $\mathcal{C}$. It follows from \cite[Theorem~2.1]{BYZ} that the ideals $I (\bar{\C})$ and $I (\bar{\mathcal{D}})$ share the same non-linear strands (i.e. $\beta_{i,j} \left( I (\bar{\C}) \right) = \beta_{i,j} \left( I (\bar{\mathcal{D}}) \right)$, for all $i,j$ with $j-i>d$). Theorem~\ref{inequality of multigraded betti numbers-thm} describes all the Betti numbers of $I(\bar{\mathcal{D}})$ with respect to those of $I(\bar{\mathcal{C}})$. Before stating this theorem, we first   prove the following proposition which is a crucial point for the proof of the desired conclusion.

\begin{prop}[{compare to \cite[Proposition 4.2]{MS}}] \label{preliminary statement}
Let $\mathcal{H}$ be a $d$-uniform clutter on the vertex set $[n]$, $e \in \mathrm{Simp} (\mathcal{H})$ and $\Delta = \Delta \left( \mathcal{H} \right)$  the clique complex of $\mathcal{H}$.
\begin{itemize}
\item[\rm (a)] If  $\Delta^\star = \Delta \setminus \{ G \in \Delta \colon \, e \subseteq G \}$, then 
$$\widetilde{H}_i \left( \Delta; \mathbb{K} \right) \cong \widetilde{H}_i \left( \Delta^\star; \mathbb{K} \right), \text{ for all }i > d-2.$$

\item[\rm (b)] Let $\Delta'$ be a subcomplex of $\Delta$ defined as follows: $\Delta'= \Delta \setminus \{ G \in \Delta \colon \, F \subseteq G \}$, where $F=e \cup \{v\}$ for some $v\in N_\mathcal{H}[e]\setminus e$.   Set $\Delta'=\Delta$, if there is no such $v$. Then 
$$\widetilde{H}_i \left( \Delta; \mathbb{K} \right) \cong \widetilde{H}_i \left( \Delta'; \mathbb{K} \right), \text{ for all }i \neq d-2.$$

\item[\rm (c)] the following sequence is exact:
\begin{equation*}
0 \to \widetilde{H}_{d-2} \left( \Delta^\star; \mathbb{K} \right) \to \widetilde{H}_{d-2} \left( \Delta; \mathbb{K} \right) \to \widetilde{H}_{d-3} \left( \partial e * \langle N_{\mathcal{H}}[e] \setminus e \rangle; \mathbb{K} \right) \to \widetilde{H}_{d-3} \left( \Delta^\star; \mathbb{K} \right) \to 0.
\end{equation*}
\end{itemize}
\end{prop}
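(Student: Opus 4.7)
The plan is to derive all three parts from a single Mayer--Vietoris decomposition tailored to the simplicial element $e$. The key first step is to identify the closed star of $e$ in $\Delta$: since $e \in \mathrm{Simp}(\mathcal{H})$, the set $N := N_\mathcal{H}[e]$ is a face of $\Delta$, and a direct check shows that every face of $\Delta$ containing $e$ has the form $e \cup T$ with $T \subseteq W := N \setminus e$. Hence the subcomplex $\bar\Sigma$ generated by $\{G \in \Delta : e \subseteq G\}$ equals the full simplex $\langle N \rangle$, which is contractible, and for the decomposition $\Delta = \Delta^\star \cup \bar\Sigma$ the intersection $\Delta^\star \cap \bar\Sigma$ is precisely the join $\partial e \ast \langle W \rangle$.

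Feeding this decomposition into the reduced Mayer--Vietoris sequence \eqref{Reduced Mayer-Vietoris sequence}, and using that $\bar\Sigma$ is contractible, yields a long exact sequence of the form
\[
\cdots \to \widetilde{H}_i(\partial e \ast \langle W \rangle) \to \widetilde{H}_i(\Delta^\star) \to \widetilde{H}_i(\Delta) \to \widetilde{H}_{i-1}(\partial e \ast \langle W \rangle) \to \cdots .
\]
Since the join with a nonempty simplex is contractible (as recalled in the preliminaries), $\partial e \ast \langle W \rangle$ is acyclic when $W \neq \emptyset$, and reduces to the $(d-3)$-sphere $\partial e$ when $W = \emptyset$. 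In either case $\widetilde{H}_i(\partial e \ast \langle W \rangle) = 0$ for $i \geq d-2$, which immediately yields (a); the piece of the sequence in degrees $d-2$ and $d-3$ gives the exact sequence stated in (c).

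For part (b), I would run the same machinery with $F$ in the role of $e$. The closed star of $F$ in $\Delta$ is again $\langle N \rangle$, and the intersection $\Delta' \cap \langle N \rangle$ becomes $\partial F \ast \langle W' \rangle$ with $W' := N \setminus F$. When $W' \neq \emptyset$ this intersection is acyclic, and Mayer--Vietoris delivers the desired isomorphism in every degree. The main obstacle is the degenerate case $W' = \emptyset$: then $F = N$, the intersection is the $(d-2)$-sphere $\partial F$, and a plain Mayer--Vietoris calculation only yields the isomorphism for $i \notin \{d-2, d-1\}$. To close the gap at $i = d-1$, I would exploit that $W' = \emptyset$ forces $e$ to be a free face of $\Delta$ whose unique coface is $F$, so the elementary collapse of the pair $(e,F)$ produces a homotopy equivalence $\Delta \simeq \Delta \setminus \{e, F\} = \Delta' \setminus \{e\}$. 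A second Mayer--Vietoris computation for the decomposition $\Delta' = (\Delta' \setminus \{e\}) \cup \langle e \rangle$ (intersection $\partial e$), combined with the fact that the fundamental class of $\partial e$ already bounds in $\Delta$ via $e$ itself and hence in $\Delta' \setminus \{e\}$, forces the relevant connecting maps to vanish. Extracting these vanishings cleanly in the degenerate case is the delicate technical point.
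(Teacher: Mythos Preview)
Your treatment of (a) and (c) matches the paper's: the same decomposition $\Delta = \Delta^\star \cup \langle N_{\mathcal{H}}[e]\rangle$ with intersection $\partial e * \langle N_{\mathcal{H}}[e]\setminus e\rangle$, fed into Mayer--Vietoris. One point you gloss over in (c): to obtain the terminal zero $\widetilde{H}_{d-3}(\Delta^\star)\to 0$ you also need $\widetilde{H}_{d-3}(\Delta)=0$, which the paper gets from the fact that $\Delta$, as the clique complex of a $d$-uniform clutter, contains the full skeleton $\langle[n]\rangle^{[d-2]}$.

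For (b) the paper takes a different and cleaner route. Rather than decomposing along $F$, it decomposes $\Delta'$ along $e$: since $\Delta'=\Delta(\mathcal{H}\setminus\{F\})$ and $e$ is still simplicial there with closed neighborhood $N_{\mathcal{H}}[e]\setminus\{v\}$, one has $\Delta' = \Delta^\star \cup \langle N_{\mathcal{H}}[e]\setminus\{v\}\rangle$ and $\Delta^\star\cap\langle N_{\mathcal{H}}[e]\setminus\{v\}\rangle=\partial e * \langle N_{\mathcal{H}}[e]\setminus F\rangle$. This is literally the computation of (a) again and yields $\widetilde{H}_i(\Delta')\cong\widetilde{H}_i(\Delta^\star)$ for all $i>d-2$, uniformly, with no degenerate case (the intersection involves $\partial e$, which is $(d-3)$-dimensional, not $\partial F$). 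Combined with (a) this settles $i>d-2$; for $i<d-2$ the paper simply observes that both $\Delta$ and $\Delta'$ are clique complexes of $d$-uniform clutters, hence have vanishing reduced homology below degree $d-2$.

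Your degenerate-case fix does work, but you are making it harder than it is. When $W'=\emptyset$ the only degree still in question is $i=d-1$ (your first Mayer--Vietoris with $\partial F$ already handles $i\notin\{d-2,d-1\}$). The collapse gives $\widetilde{H}_{d-1}(\Delta)\cong\widetilde{H}_{d-1}(\Delta'\setminus\{e\})$, and in your second Mayer--Vietoris $\Delta'=(\Delta'\setminus\{e\})\cup\langle e\rangle$ the intersection $\partial e$ is $(d-3)$-dimensional, so its homology in degrees $d-1$ and $d-2$ vanishes trivially and $\widetilde{H}_{d-1}(\Delta'\setminus\{e\})\cong\widetilde{H}_{d-1}(\Delta')$ falls out immediately. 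The remark about the fundamental class of $\partial e$ bounding ``via $e$'' in $\Delta'\setminus\{e\}$ is both unnecessary and, as phrased, off: $e$ has been removed from that complex.
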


\begin{proof}
(a) Note that
\begin{itemize}
\item[(i)] $\Delta = \Delta^\star \cup \langle N_{\mathcal{H}}[e] \rangle$, and
\item[(ii)]  $\Delta^\star \cap \langle N_{\mathcal{H}}[e] \rangle= \partial e * \langle N_{\mathcal{H}}[e] \setminus e \rangle$, where $\partial e$ is the pure $(d-3)$-skeleton of $\langle e\rangle$.
\end{itemize}
It follows from (ii) that for all $i  \neq d-3$
\begin{equation} \label{internal equation 3}
\widetilde{H}_i \left( \Delta^\star \cap \langle N_{\mathcal{H}}[e] \rangle; \mathbb{K} \right) = 0.
\end{equation}
Indeed, if $N_{\mathcal{H}}[e] \setminus e = \varnothing$, then $\Delta^\star \cap \langle N_{\mathcal{H}}[e] \setminus e \rangle =  \partial e$ which is a pure $(d-3)$-skeleton of of the simplex $\langle e \rangle$. Otherwise, $\Delta^\star \cap \langle N_{\mathcal{H}}[e]  \rangle$ is the join of $\partial e$ with a simplex. Hence all of its homologies vanish.

Using (\ref{internal equation 3}) and the reduced  Mayer-Vietoris long exact sequence 
\begin{align*}
\cdots \to \widetilde{H}_i \left( \Delta^\star \cap \langle N_{\mathcal{H}}[e] \rangle; \mathbb{K} \right) \to \widetilde{H}_i \left( \Delta^\star; \mathbb{K} \right) \oplus \widetilde{H}_i \left( \langle N_{\mathcal{H}}[e]  \rangle; \mathbb{K} \right) \to \widetilde{H}_i \left( \Delta; \mathbb{K} \right) \\ 
\to \widetilde{H}_{i-1} \left( \Delta^\star \cap \langle N_{\mathcal{H}}[e] \rangle; \mathbb{K} \right) \to \cdots
\end{align*}
we conclude that for all $i> d-2$, 
\begin{equation*}
\widetilde{H}_i \left( \Delta; \mathbb{K} \right) \cong  \widetilde{H}_i \left( \Delta^\star; \mathbb{K} \right) \oplus \widetilde{H}_i \left( \langle N_{\mathcal{H}}[e]  \rangle; \mathbb{K} \right).
\end{equation*}
Since $\langle N_{\mathcal{H}}[e] \rangle$ is a simplex, $\widetilde{H}_i \left( \langle N_{\mathcal{H}}[e] \rangle; \mathbb{K} \right) =0$, for all $i$. Therefore (a) is obtained.

\medskip
(b) If $\Delta'=\Delta$, then we are done. Assume that $N_{\mathcal{H}}[e] \setminus e \neq \varnothing$ and $v$ and $F$ are as in statement (b) above. One may easily check that $\Delta'=\Delta(\mathcal{H}\setminus \{F\})$. Hence $\widetilde{H}_i \left( \Delta'; \mathbb{K} \right)=\widetilde{H}_i \left( \Delta; \mathbb{K} \right)=0$ for all $i<d-2$ (\cite[Proposition 3.1]{MAR}). On the other hand, we have:
\begin{itemize}
\item[(iii)] $\Delta' = \Delta^\star \cup \langle N_{\mathcal{H}}[e] \setminus \{v \} \rangle$, and
\item[(iv)]  $\Delta^\star \cap \langle N_{\mathcal{H}}[e] \setminus \{v\} \rangle= \partial e * \langle N_{\mathcal{H}}[e] \setminus F \rangle$.
\end{itemize}
One may use the same method as in (a) to obtain $\widetilde{H}_i \left( \Delta'; \mathbb{K} \right) \cong \widetilde{H}_i \left( \Delta^\star; \mathbb{K} \right)$, for all $i > d-2$. Now the result follows from (a).

\medskip
(c) Consider the following Mayer-Vietoris sequence:
\begin{equation} \label{internal equation 9}
\begin{split}
\cdots &\to \widetilde{H}_{d-2} \left( \partial e * \langle N_{\mathcal{H}}[e] \setminus e \rangle ; \mathbb{K}\right) \\
& \to \widetilde{H}_{d-2} \left( \Delta^\star; \mathbb{K} \right) \oplus \widetilde{H}_{d-2} \left( \langle N_{\mathcal{H}}[e]\rangle; \mathbb{K} \right) \to \widetilde{H}_{d-2} \left( \Delta; \mathbb{K} \right) \\ 
& \to \widetilde{H}_{d-3} \left( \partial e * \langle N_{\mathcal{H}}[e] \setminus e \rangle; \mathbb{K} \right) \to \widetilde{H}_{d-3} \left( \Delta^\star; \mathbb{K} \right) \oplus \widetilde{H}_{d-3} \left( \langle N_{\mathcal{H}}[e]\rangle; \mathbb{K} \right) \\
& \to \widetilde{H}_{d-3} \left( \Delta; \mathbb{K} \right) \to \cdots.
\end{split}
\end{equation}
One should note that
\begin{itemize}
\item $\widetilde{H}_{d-2} \left( \langle N_{\mathcal{H}}[e]\rangle; \mathbb{K} \right) = \widetilde{H}_{d-3} \left( \langle N_{\mathcal{H}}[e]\rangle; \mathbb{K} \right) = 0$, since $\langle N_{\mathcal{H}}[e]\rangle$ is a simplex.
\item $\widetilde{H}_{d-3} \left( \Delta; \mathbb{K} \right) =0$, since 
$\langle [n]\rangle^{[k]}$ is a subcomplex of $\Delta$ for all $k\leq d-2$. 
\item $\widetilde{H}_{d-2} \left( \partial e * \langle N_{\mathcal{H}}[e] \setminus e \rangle; \mathbb{K} \right) =0$, because
\begin{itemize}
\item If $\langle N_{\mathcal{H}}[e] \setminus e \rangle = \varnothing$, then $\partial e * \langle N_{\mathcal{H}}[e] \setminus e \rangle = \partial e$ is a simplicial complex of dimension $d-3$.
\item If $\langle N_{\mathcal{H}}[e] \setminus e \rangle \neq \varnothing$, then $\partial e * \langle N_{\mathcal{H}}[e] \setminus e \rangle$ is the join of $\partial e$ and a simplex.
\end{itemize}
\end{itemize}
By applying the above facts to (\ref{internal equation 9}), we obtain the long exact sequence as in (c).
\end{proof}

Now we are ready to prove the main theorem of this section.
\begin{thm} \label{inequality of multigraded betti numbers-thm}
Let $I=I(\bar{\C})$, where $\C$ is a $d$-uniform clutter on $[n]$. Let  $e \in \mathrm{Simp}(\C)$ and let $u:=\mathbf{x}_F=\prod_{i\in F}x_i$ with $F\in \C$ and $e\subset F$.  Then for all $i \geq 0$ and  all $\textbf{{\rm\textbf{a}}} \in \mathbb{Z}^n$ one has:
\begin{equation} \label{inequality of multigraded betti numbers}
\beta_{i, \textbf{{\rm\textbf{a}}}} \left( I+(u)  \right) =  \beta_{i, \textbf{{\rm\textbf{a}}}} \left( I \right) + 
\begin{cases}
1, & \text{if } |\textbf{{\rm\textbf{a}}}| =d+i, \; F \subseteq \mathrm{supp} (\textbf{{\rm\textbf{a}}}) \text{ and } \left( x_j \colon \; j \in \mathrm{supp} (\textbf{{\rm\textbf{a}}}) \setminus F \right) \subset \left(I \colon \textbf{\rm \textbf{x}}_F \right)\\
0, & \text{otherwise}
\end{cases}
\end{equation}
\end{thm}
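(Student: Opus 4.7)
The plan is to translate the statement into simplicial topology via Hochster's formula and then exploit Proposition~\ref{preliminary statement}. Observe that $I = I_{\Delta(\C)}$, while adjoining $u = \mathbf{x}_F$ corresponds to removing $F$ from the clique complex, so $I + (u) = I_{\Delta(\C \setminus F)}$. Since multigraded Betti numbers of squarefree monomial ideals are concentrated in squarefree multidegrees, it suffices to take $\mathbf{a}$ squarefree with support $W$. Using the elementary identity $\Delta(\C)_W = \Delta(\C_W)$, Hochster's formula gives
\begin{equation*}
\beta_{i,W}(I) = \dim_\mathbb{K} \widetilde{H}_{|W|-i-2}\bigl(\Delta(\C_W);\mathbb{K}\bigr), \qquad \beta_{i,W}(I+(u)) = \dim_\mathbb{K} \widetilde{H}_{|W|-i-2}\bigl(\Delta(\C_W \setminus F);\mathbb{K}\bigr).
\end{equation*}

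For $F \not\subseteq W$ one has $\C_W \setminus F = \C_W$ and the Betti numbers trivially agree. For $F \subseteq W$, $e$ remains simplicial over $\C_W$ with $F \in \C_W$, and Proposition~\ref{preliminary statement}(b) applied to $\C_W$ gives $\widetilde{H}_j(\Delta(\C_W)) \cong \widetilde{H}_j(\Delta(\C_W \setminus F))$ for every $j \neq d-2$. This settles the ``$0$'' branch: the Betti numbers coincide whenever $|W| \neq d+i$.

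It remains to analyze $F \subseteq W$ and $|W| = d + i$. Set $\Delta^\star := \Delta(\C_W) \setminus \{G : e \subseteq G\}$; a direct bookkeeping shows $\Delta(\C_W \setminus F)$ and $\Delta(\C_W)$ both ``star-down'' to the same $\Delta^\star$. Applying Proposition~\ref{preliminary statement}(c) to $\C_W$: since $v \in N_{\C_W}[e] \setminus e$, the complex $\partial e * \langle N_{\C_W}[e] \setminus e \rangle$ is a join with a nonempty simplex, hence contractible, so the four-term sequence collapses to $\widetilde{H}_{d-2}(\Delta(\C_W)) \cong \widetilde{H}_{d-2}(\Delta^\star)$ and $\widetilde{H}_{d-3}(\Delta^\star) = 0$. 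Applying the same proposition to $\C_W \setminus F$, the corresponding join is $\partial e * \langle (N_{\C_W}[e] \setminus e) \setminus \{v\}\rangle$: still contractible (hence giving no change) when $(N_{\C_W}[e] \setminus e) \setminus \{v\} \neq \varnothing$, while in the degenerate case $N_{\C_W}[e] = F$ it reduces to $\partial e$, whose $(d-3)$-rd reduced homology is one-dimensional, and the exact sequence then forces $\dim \widetilde{H}_{d-2}(\Delta(\C_W \setminus F)) = \dim \widetilde{H}_{d-2}(\Delta^\star) + 1$, producing the promised ``$+1$''.

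To close, I identify the geometric condition $N_{\C_W}[e] = F$ with the algebraic one $(x_j : j \in W \setminus F) \subseteq (I:u)$. For $j \in W \setminus F$, $x_j \in I : u$ iff $\mathbf{x}_{F \cup \{j\}} \in I$; simpliciality of $e$ over $\C$ makes this equivalent to $j \notin N_\C[e]$, since $j \in N_\C[e]$ forces $F \cup \{j\} \subseteq N_\C[e]$ to be a clique (so $\mathbf{x}_{F \cup \{j\}} \notin I$), whereas $j \notin N_\C[e]$ gives $e \cup \{j\} \in \bar{\C}$ with $\mathbf{x}_{e \cup \{j\}} \mid \mathbf{x}_{F \cup \{j\}}$. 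Hence $\{x_j : j \in W \setminus F\} \subseteq I : u$ iff $W \cap N_\C[e] \subseteq F$, i.e. $N_{\C_W}[e] = F$, exactly the degenerate case above. The main obstacle is the parallel handling of the two Mayer--Vietoris exact sequences of Proposition~\ref{preliminary statement}(c) and pinpointing precisely when the join factor collapses so that $\widetilde{H}_{d-3}(\partial e) = \mathbb{K}$ materializes as the claimed $+1$.
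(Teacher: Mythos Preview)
Your proof is correct and follows essentially the same route as the paper: reduce to Hochster's formula on induced subcomplexes $\Delta(\C_W)$ and $\Delta(\C_W\setminus F)$, invoke Proposition~\ref{preliminary statement}(b) for $|W|\neq d+i$, and compare the two four-term exact sequences of Proposition~\ref{preliminary statement}(c) when $|W|=d+i$. The only cosmetic difference is that you split cases on $F\subseteq W$ rather than $e\subset W$, and you exploit the vanishing $\widetilde{H}_{d-3}(\Delta^\star)=0$ (forced by contractibility of $\partial e * \langle N_{\C_W}[e]\setminus e\rangle$ since $v$ lies in that neighborhood) to read off the ``$+1$'' directly, whereas the paper writes both $\beta_{i,\mathbf{a}}(I(\bar{\C}))$ and $\beta_{i,\mathbf{a}}(I(\bar{\mathcal{D}}))$ as the same alternating expression in $\beta_{\bullet,\mathbf{a}}(I_{\Delta^\star})$ plus a join-homology term and then subtracts; the content is identical.
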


\begin{proof}
Let  $F=e \cup \{v\}$ and $\mathcal{D} = \mathcal{C} \setminus \{F\}$. For $W \subseteq [n]$, set $\mathcal{H}=\C_W$ and let $\Delta$, $\Delta'$ and $\Delta^{\star}$ be as in Proposition~\ref{preliminary statement}. It is easy to see that $\Delta(\C_W) = \Delta(\C)_W$, $\Delta(\C_W)' = \Delta(\mathcal{D})_W$ and $\Delta(\C_W)^\star=(\Delta(\C)^\star)_W$. 

If $e \not\subset W$, then $\Delta (\C)_W = \Delta (\mathcal{D})_W$ and $\Delta(\C_W)=\Delta(\C_W)^\star$. Otherwise, $e \in \mathrm{Simp}(\C_W)$ and by Proposition~\ref{preliminary statement} we conclude that
\begin{equation} \label{internal equation 7}
\widetilde{H}_i \left( \Delta(\C)_W; \mathbb{K} \right) \cong \widetilde{H}_i \left( \Delta(\mathcal{D})_W; \mathbb{K} \right), \quad \text{for all } i>d-2.
\end{equation}
Moreover, by using Proposition~\ref{preliminary statement}(c), we obtain
\begin{equation} \label{internal equation 13}
\begin{split}
\dim_\mathbb{K}\widetilde{H}_{d-2} \left( \Delta(\C)_W; \mathbb{K} \right) = \dim_\mathbb{K}\widetilde{H}_{d-2} \left((\Delta(\C)^\star)_W ; \mathbb{K} \right) -\dim_\mathbb{K}\widetilde{H}_{d-3} \left( (\Delta(\C)^\star)_W; \mathbb{K} \right) \\
 + \dim_\mathbb{K}\widetilde{H}_{d-3} \left( \partial e * \langle N_{\mathcal{\C}_W}[e] \setminus e \rangle ; \mathbb{K}\right).
\end{split}
\end{equation}

Assume that $\textbf{{\rm\textbf{a}}} \in \mathbb{Z}^n$ with $W=\mathrm{supp} (\textbf{{\rm\textbf{a}}})$. First suppose $|\textbf{{\rm\textbf{a}}}| >d+i$. Using a theorem of Hochster (see e.g. \cite[Theorem 8.1.1]{HHBook}) we have
\begin{align*}
\beta_{i, \textbf{{\rm\textbf{a}}}} \left( I (\bar{\C}) \right) 
 = \beta_{i, \textbf{{\rm\textbf{a}}}} \left( I_{\Delta(\mathcal{C})} \right) 
& = \dim_\mathbb{K} \mathrm{Tor}_i (\mathbb{K}; I_{\Delta(\mathcal{C})})_{\textbf{{\rm\textbf{a}}}} \\
& = \dim_\mathbb{K} \widetilde{H}_{|W|-i-2} \left( \Delta(\C)_W; \mathbb{K} \right) \\
& = \dim_\mathbb{K} \widetilde{H}_{|W|-i-2} \left( \Delta(\mathcal{D})_W; \mathbb{K} \right) \tag{\text{by }(\ref{internal equation 7})}\\
& = \dim_\mathbb{K} \mathrm{Tor}_i (\mathbb{K}; I_{\Delta(\mathcal{D})})_{\textbf{{\rm\textbf{a}}}} \\
& = \beta_{i, \textbf{{\rm\textbf{a}}}} \left( I_{\Delta(\mathcal{D})} \right) = \beta_{i, \textbf{{\rm\textbf{a}}}} \left( I (\bar{\mathcal{D}}) \right).
\end{align*}

Now suppose that $|\textbf{{\rm\textbf{a}}}| =d+i$.  If $e\not\subset \mathrm{supp}(\textbf{{\rm\textbf{a}}})$, then as mentioned before we have $\Delta(\mathcal{C})_W=\Delta(\mathcal{D})_W$. The same discussion as above shows that $\beta_{i, \textbf{{\rm\textbf{a}}}} \left( I (\bar{\C}) \right) =\beta_{i, \textbf{{\rm\textbf{a}}}} \left( I (\bar{\mathcal{D}}) \right)$. Suppose $e\subset \mathrm{supp}(\textbf{{\rm\textbf{a}}})$.  
Using the theorem of Hochster and (\ref{internal equation 13}), we have
\begin{align*}
\beta_{i, \textbf{{\rm\textbf{a}}}} \left( I (\bar{\C}) \right) 
& = \dim_\mathbb{K} \widetilde{H}_{d-2} \left( \Delta(\C)_W; \mathbb{K} \right)  \\
& = \dim_\mathbb{K}\widetilde{H}_{d-2} \left( (\Delta(\C)^\star)_W; \mathbb{K} \right) -\dim_\mathbb{K}\widetilde{H}_{d-3} \left( (\Delta(\C)^\star)_W; \mathbb{K} \right) \\
& \quad + \dim_\mathbb{K}\widetilde{H}_{d-3} \left( \partial e * \langle N_{\mathcal{\C}_W}[e] \setminus e \rangle ;\mathbb{K} \right)\\
& = \beta_{i, \textbf{{\rm\textbf{a}}}} \left( I_{\Delta(\C)^\star} \right) - \beta_{i+1, \textbf{{\rm\textbf{a}}}} \left( I_{\Delta(\C)^\star} \right) + \dim_\mathbb{K}\widetilde{H}_{d-3} \left( \partial e * \langle N_{\mathcal{\C}_W}[e] \setminus e \rangle ; \mathbb{K} \right).
\end{align*}
Hence for $\textbf{{\rm\textbf{a}}} \in \mathbb{Z}^n$ with $|\textbf{{\rm\textbf{a}}}| =d+i$ and $e\subset \mathrm{supp}(\textbf{{\rm\textbf{a}}})$, and for $W=\mathrm{supp} (\textbf{{\rm\textbf{a}}})$, one has
\begin{equation*}
\beta_{i, \textbf{{\rm\textbf{a}}}} \left( I (\bar{\C}) \right) = 
\beta_{i, \textbf{{\rm\textbf{a}}}} \left( I_{\Delta(\C)^\star} \right) - \beta_{i+1, \textbf{{\rm\textbf{a}}}} \left( I_{\Delta(\C)^\star} \right) + \dim_\mathbb{K}\widetilde{H}_{d-3} \left( \partial e * \langle N_{\mathcal{\C}_W}[e] \setminus e \rangle ; \mathbb{K} \right).
\end{equation*}

\medskip
Now set $\mathcal{H}=\mathcal{D}_W$ in Proposition~\ref{preliminary statement} and let $\Delta$ and $\Delta^\star$ be as defined in that proposition. Note that if $e\subset W$, then $e\in \mathrm{simp}(\mathcal{D}_W)$. It is seen that $\Delta(\mathcal{D}_W)^\star=\left(\Delta(\mathcal{C})^\star\right)_W$, because $\mathcal{D}\setminus e=\mathcal{C}\setminus e$.  
A similar argument as above yields the following:
\begin{equation*}
\beta_{i, \textbf{{\rm\textbf{a}}}} \left( I (\bar{\mathcal{D}}) \right) = 
\beta_{i, \textbf{{\rm\textbf{a}}}} \left( I_{\Delta(\C)^\star} \right) - \beta_{i+1, \textbf{{\rm\textbf{a}}}} \left( I_{\Delta(\C)^\star} \right) + \dim_\mathbb{K}\widetilde{H}_{d-3} \left( \partial e * \langle N_{\mathcal{D}_W}[e] \setminus e \rangle ; \mathbb{K} \right),
\end{equation*}
where $|\textbf{{\rm\textbf{a}}}| =d+i$, $e\subset \mathrm{supp}(\textbf{{\rm\textbf{a}}})$, and  $W=\mathrm{supp} (\textbf{{\rm\textbf{a}}})$.

Since  $N_{\mathcal{D}_W}[e] \setminus e=N_{\mathcal{C}_W}[e] \setminus F$,  for $\textbf{{\rm\textbf{a}}} \in \mathbb{Z}^n$ with $|\textbf{{\rm \textbf{a}}}| =d+i$, we obtain 
\begin{align*}
\beta_{i, \textbf{{\rm\textbf{a}}}} & \left( I (\bar{\mathcal{D}}) \right)  =  
\beta_{i, \textbf{{\rm\textbf{a}}}} \left( I(\bar{\mathcal{C}}) \right) + \\
& \begin{cases}
0, & \text{if } e \not\subset W\\
\dim_\mathbb{K}\widetilde{H}_{d-3} \left( \partial e * \langle N_{\mathcal{\C}_W}[e] \setminus F \rangle ; \mathbb{K} \right) - \dim_\mathbb{K}\widetilde{H}_{d-3} \left( \partial e * \langle N_{\mathcal{\C}_W}[e] \setminus e \rangle ; \mathbb{K} \right) & \text{otherwise}.
\end{cases}
\end{align*}
Note that
\begin{align*} 
\dim_\mathbb{K}\widetilde{H}_{d-3} \left( \partial e * \langle N_{\mathcal{\C}_W}[e] \setminus F \rangle ; \mathbb{K} \right) - \dim_\mathbb{K}\widetilde{H}_{d-3} \left( \partial e * \langle N_{\mathcal{\C}_W}[e] \setminus e \rangle ; \mathbb{K} \right)=
\begin{cases}
1, &\text{if \ }N_{\mathcal{\C}_W}[e] =F\\
0, &\text{otherwise.}
\end{cases}
\end{align*}
Furthermore, $N_{\mathcal{\C}_W}[e] =F$ is equivalent to
$$F \subseteq \mathrm{supp} (\textbf{{\rm\textbf{a}}}) \text{ and } \left( x_j \colon \; j \in \mathrm{supp} (\textbf{{\rm\textbf{a}}}) \setminus F \right) \subset \left(I \colon \textbf{\rm \textbf{x}}_F \right).$$
This completes the proof.
\end{proof}
\medskip
Though Theorem~\ref{inequality of multigraded betti numbers-thm} gives a direct proof for formula (\ref{inequality of multigraded betti numbers}), one can obtain it showing that $J$ is Betti splittable. Indeed, the following  discussion shows that $J=I+(\mathbf{x}_F)$ is even splittable.

Let $I$ and $J$ and $F$ be as defined in Theroem~~\ref{inequality of multigraded betti numbers-thm} and let $K:=(\mathbf{x}_F)$. Note that by \cite[Propositions~1.2.1, 1.2.2]{HHBook}, we have 
\begin{eqnarray*}
I\cap K=(\mathbf{x}_F)(I:\mathbf{x}_F)
\end{eqnarray*}
It follows from Lemma~\ref{simplicial subclutter has linear quotients}(b) that
\begin{eqnarray*}\label{intersection}
	I\cap K=(\mathbf{x}_F)(x_i:\ x_i\mathbf{x}_e\in I).
\end{eqnarray*}

Now define  $\phi: \mathcal{G}(I\cap K)\to \mathcal{G}(I)$ and $\psi: \mathcal{G}(I\cap K)\to \mathcal{G}(K)$ to be the functions with $\phi(x_i\mathbf{x}_F)=x_i\mathbf{x}_e$ and $\psi(x_i\mathbf{x}_F)=\mathbf{x}_F$.
It is easy to check that the function $\mathcal{G}(I\cap K)\to \mathcal{G}(I)\times \mathcal{G}(K)$ with $w\mapsto (\phi(w), \psi(w))$ is a splitting function. Hence $J=I+(\mathbf{x}_F)$ is a splitting and so $J$ is Betti splittable. That is for ${\rm \textbf{a}}\in\mathbb{Z}^n$, 
\begin{equation}\label{multidegree split}
\beta_{i,{\rm \textbf{a}}}(J) = \beta_{i,,{\rm \textbf{a}}}(I) + \beta_{i,,{\rm \textbf{a}}}((\mathbf{x}_F)) + \beta_{i-1,,{\rm \textbf{a}}}(I \cap (\mathbf{x}_F)).
\end{equation}
Note that
\begin{align*}
\beta_{i,{\rm \textbf{a}}}((\mathbf{x}_F))=
\begin{cases}
1,& \text{if } i=0,\ \mathrm{supp}({\rm \textbf{a}})=F\\
0,&\text{otherwise.}
\end{cases}
\end{align*}
Since by Lemma~\ref{simplicial subclutter has linear quotients}(b), the ideal $I:\mathbf{x}_F$ is generated by variables, setting $\deg x_i=e_i$, where $e_i$ is the $i$-th standard basis vector, the Koszul complex is a multigarded minimal free resolution of $I:\mathbf{x}_F$. Therefore, 
\begin{align*}
\beta_{i-1, {\rm \textbf{a}}}(I:\mathbf{x}_F)=
\begin{cases}
1,& \text{if } \supp({\rm \textbf{a}})=\{j_1,\ldots, j_i\} \text{with } x_{j_k}\in I:\mathbf{x}_F\\
0,& \text{otherwise.}
\end{cases}
\end{align*}
It follows that 
\begin{align*}
\beta_{i-1, {\rm \textbf{a}}}(I\cap (\mathbf{x}_F))=
\begin{cases}
1,& \text{if } \supp({\rm \textbf{a}})=F\cup \{j_1,\ldots, j_i\}  \text{ with } x_{j_k}\in I:\mathbf{x}_F\\
0,& \text{otherwise.}
\end{cases}
\end{align*}

Applying the above formulas in (\ref{multidegree split}) we get (\ref{inequality of multigraded betti numbers}).


\begin{cor} \label{chordallin}
Let $\mathcal{C}$ be a $d$-uniform clutter on the vertex set $[n]$ and let $\mathcal{D}$ be a simplicial subclutter of $\C$. Then
\begin{itemize}
\item[\rm (a)] {\rm(}\cite[Theorem 2.1]{BYZ}{\rm)} $\beta_{i,j} \left( I (\bar{\C}) \right) = \beta_{i,j} \left( I (\bar{\mathcal{D}}) \right)$, for all $i,j$ with $j-i>d$.
\item[\rm (b)] If $\C \neq \C_{n,d}$, then $\mathrm{reg} \left( I (\bar{\C}) \right) = \mathrm{reg} \left( I (\bar{\mathcal{D}}) \right)$.
\item[\rm (c)] $I (\bar{\C})$ has a linear resolution if and only if $I (\bar{\mathcal{D}})$ has a linear resolution.
\item[\rm (d)] {\rm(}\cite[Theorem 3.3]{BYZ}{\rm)} If $\C \neq \C_{n,d}$ is a chordal clutter, then $I(\bar{\mathcal{C}})$ has a $d$-linear resolution over all fields.
\item[\rm(e)] there exists $u\notin I(\bar{\mathcal{C}})$ such that the non-linear Betti numbers of $I(\bar{\mathcal{C}})$ and $I(\bar{\mathcal{C}})+(u)$ are the same.
\end{itemize}
\end{cor}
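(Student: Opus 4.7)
The whole corollary follows by iterating Theorem~\ref{inequality of multigraded betti numbers-thm}, so the plan is first to set up a step-by-step version of the simplicial-subclutter relation and then read off each item from the extra term in (\ref{inequality of multigraded betti numbers}). By the definition, $\mathcal{D}=\mathcal{C}\setminus A_1\setminus\cdots\setminus A_t$ with each $A_i$ consisting of facets of $\mathcal{C}\setminus A_1\setminus\cdots\setminus A_{i-1}$ containing a fixed $e_i\in\mathrm{Simp}(\mathcal{C}\setminus A_1\setminus\cdots\setminus A_{i-1})$. The first task is to observe that one may remove the facets of each $A_i$ \emph{one at a time}: if $e$ is simplicial over a clutter $\mathcal{H}$ and $F\in\mathcal{H}$ with $e\subset F$, then in $\mathcal{H}\setminus\{F\}$ one still has $N_{\mathcal{H}\setminus\{F\}}[e]\subseteq N_{\mathcal{H}}[e]$, and no $d$-subset of $N_{\mathcal{H}\setminus\{F\}}[e]$ equals $F$ (the only candidate was $F$ itself), so the inclusion $N_{\mathcal{H}\setminus\{F\}}[e]\in\Delta(\mathcal{H}\setminus\{F\})$ persists. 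Hence we obtain a chain $\mathcal{C}=\mathcal{C}_0\supsetneq\mathcal{C}_1\supsetneq\cdots\supsetneq\mathcal{C}_N=\mathcal{D}$ in which each step removes a single facet $F_k\supset e^{(k)}$ with $e^{(k)}$ simplicial over $\mathcal{C}_{k-1}$, and Theorem~\ref{inequality of multigraded betti numbers-thm} applies to each step with $u_k=\mathbf{x}_{F_k}$.

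For (a), the extra term in (\ref{inequality of multigraded betti numbers}) is nonzero only when $|\textbf{\rm\textbf{a}}|=d+i$, i.e.\ in the linear strand; so $\beta_{i,j}(I(\bar{\mathcal{C}}_{k-1}))=\beta_{i,j}(I(\bar{\mathcal{C}}_k))$ whenever $j-i>d$, and iterating over $k=1,\dots,N$ proves (a). For (b) and (c), note that $\mathcal{C}\neq\mathcal{C}_{n,d}$ gives $\bar{\mathcal{C}}\neq\varnothing$, and since $\mathcal{D}\subseteq\mathcal{C}$ we also have $\bar{\mathcal{D}}\supseteq\bar{\mathcal{C}}\neq\varnothing$; thus both $I(\bar{\mathcal{C}})$ and $I(\bar{\mathcal{D}})$ are nonzero and equigenerated in degree $d$, so both regularities are at least $d$. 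Combining this with (a) yields $\mathrm{reg}(I(\bar{\mathcal{C}}))=\mathrm{reg}(I(\bar{\mathcal{D}}))$ (since both regularities equal $\max(d,\max\{j-i:j-i>d,\ \beta_{i,j}\neq 0\})$), and (c) is the special case that this common regularity equals $d$.

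For (d), I would induct on $|\mathcal{C}|$ over all fields simultaneously. If $\mathcal{C}=\varnothing$, then $\bar{\mathcal{C}}=\mathcal{C}_{n,d}$ and $I(\mathcal{C}_{n,d})$ has a $d$-linear resolution over every field (a standard fact, recorded just before Definition~\ref{SC}). Otherwise, by the chordal definition, there is a simplicial maximal subcircuit $e$ such that $\mathcal{C}\setminus e$ is chordal with strictly fewer circuits. Setting $A_1=\{F\in\mathcal{C}:e\subset F\}$ shows $\mathcal{C}\setminus e=\mathcal{C}\setminus A_1$ is a simplicial subclutter of $\mathcal{C}$ (with $t=1$), and by the inductive hypothesis $I(\overline{\mathcal{C}\setminus e})$ has a $d$-linear resolution over every field; applying (c) field-by-field transfers this to $I(\bar{\mathcal{C}})$. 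Finally, for (e), choose any $F=F_1$ from the first step of the chain above and set $u=\mathbf{x}_F$; then $F\in\mathcal{C}$ ensures $u\notin I(\bar{\mathcal{C}})$, and Theorem~\ref{inequality of multigraded betti numbers-thm} shows that the only indices $(i,\textbf{\rm\textbf{a}})$ where $\beta_{i,\textbf{\rm\textbf{a}}}$ can change satisfy $|\textbf{\rm\textbf{a}}|=d+i$, hence all nonlinear (multi)graded Betti numbers are preserved. The only mildly delicate point is the ``remove one facet at a time'' reduction used to chain Theorem~\ref{inequality of multigraded betti numbers-thm}; everything else is a direct reading of (\ref{inequality of multigraded betti numbers}).
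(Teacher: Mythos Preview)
Your proof is correct and follows essentially the same route as the paper's: derive (a) by summing the multigraded formula of Theorem~\ref{inequality of multigraded betti numbers-thm} over $|\textbf{a}|=j$, then read off (b), (c), and (e). You are actually more careful than the paper in making the ``one facet at a time'' reduction explicit, which is needed since Theorem~\ref{inequality of multigraded betti numbers-thm} handles only a single added generator; your justification that $e$ stays simplicial in $\mathcal{H}\setminus\{F\}$ is correct. The one genuine difference is in (d): you induct on $|\mathcal{C}|$, showing $\mathcal{C}\setminus e$ is a simplicial subclutter of $\mathcal{C}$ and then invoking (c), whereas the paper observes in one stroke that for a chordal clutter $\mathcal{C}$ the empty clutter $\varnothing$ is itself a simplicial subclutter of $\mathcal{C}$ (just take $A_i=\{F\in\mathcal{C}\setminus A_1\setminus\cdots\setminus A_{i-1}:e_i\subset F\}$ along a simplicial order), and since $I(\bar{\varnothing})=I(\mathcal{C}_{n,d})$ has a $d$-linear resolution over every field, (c) finishes immediately. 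Both arguments are valid; the paper's avoids the induction but yours makes the mechanism more transparent.
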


\begin{proof}
One may note that
$$\beta_{i,j} (I) = \sum\limits_{\textbf{{\rm\textbf{a}}} \in \mathbb{Z}^n \atop |\textbf{{\rm\textbf{a}}}| =j} \beta_{i, \textbf{{\rm\textbf{a}}}}  (I).$$
So that (a) follows directly from Theorem~\ref{inequality of multigraded betti numbers-thm}. The statements in (b) and (c) are direct consequences of (a). Note that if $\C$ is a chordal clutter, then $\varnothing$ is a simplicial subclutter of $\C$. Since the ideal $I(\bar{\varnothing}) = I ({\C_{n,d}})$ has a $d$-linear resolution over all fields,  we conclude from (c) that
the ideal $I(\bar{\mathcal{C}})$ has a $d$-linear resolution over all fields.
\end{proof}

\begin{cor}\label{graded linear strands}
Let $\C$ be a $d$-uniform clutter on the vertex set $[n]$ and $e \in \mathrm{Simp}(\C)$.  Suppose that $A \subseteq \{F \in \C \colon \; e \subset F\}$ be a non-empty set. Let $\mathcal{D} = \C \setminus A$, $I=I(\bar{\C})$ and $J= I(\bar{\mathcal{D}} )= I+\left( \textbf{\rm \textbf{x}}_F \colon \;F \in A \right)$. Then
\begin{equation*}
\beta_{i, i+d} (J) = \beta_{i, i+d} (I) + \sum\limits_{j=0}^{|A|-1}{s+j \choose i},
\end{equation*}
where $s$ is the number of  minimal generators of the ideal $I \colon  \textbf{\rm \textbf{x}}_F$ for an arbitrary $F \in A$.
\end{cor}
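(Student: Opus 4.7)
The plan is to prove the formula by induction on $m:=|A|$, with Theorem~\ref{inequality of multigraded betti numbers-thm} serving as the single-step engine; both the base case $m=1$ and the inductive step consist of essentially the same Betti-counting argument. In the base case, taking $u=\mathbf{x}_F$ so that $J=I+(u)$, Theorem~\ref{inequality of multigraded betti numbers-thm} says, multidegree by multidegree, that $\beta_{i,\mathbf{a}}(J)-\beta_{i,\mathbf{a}}(I)$ equals $1$ exactly when $|\mathbf{a}|=d+i$, $F\subseteq\mathrm{supp}(\mathbf{a})$, and the variables indexed by $\mathrm{supp}(\mathbf{a})\setminus F$ all lie in $I\colon\mathbf{x}_F$. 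Since $I$ is squarefree, only squarefree multidegrees contribute, so the total increment equals the number of $i$-subsets $T\subset[n]\setminus F$ whose vertices correspond to generators of the variable ideal $I\colon\mathbf{x}_F$. By hypothesis this ideal has $s$ minimal generators, giving $\binom{s}{i}$ choices, which matches the claim.

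For the inductive step I would enumerate $A=\{F_1,\ldots,F_m\}$ and write $F_j=e\cup\{v_j\}$ with the vertices $v_j\in N_{\mathcal{C}}[e]\setminus e$ pairwise distinct. Setting $A'=A\setminus\{F_m\}$, $\mathcal{C}'=\mathcal{C}\setminus A'$, and $I'=I(\overline{\mathcal{C}'})$, I would first observe that $e$ remains simplicial over $\mathcal{C}'$, since $N_{\mathcal{C}'}[e]\subseteq N_{\mathcal{C}}[e]$ still lies in a face of $\Delta(\mathcal{C})\subseteq\Delta(\mathcal{C}')$. The inductive hypothesis, applied to the pair $(\mathcal{C},A')$, then delivers $\beta_{i,i+d}(I')=\beta_{i,i+d}(I)+\sum_{j=0}^{m-2}\binom{s+j}{i}$, while a second application of Theorem~\ref{inequality of multigraded betti numbers-thm} -- now to $\mathcal{C}'$ and $F_m$ -- yields $\beta_{i,i+d}(J)=\beta_{i,i+d}(I')+\binom{s'}{i}$, where $s'$ is the number of variable generators of $I'\colon\mathbf{x}_{F_m}$.

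The main obstacle, and the step to argue most carefully, is identifying $s'$; I expect $s'=s+m-1$. Using $\mathbf{x}_{F_j}\colon\mathbf{x}_{F_m}=(x_{v_j})$ for $j<m$ together with distributivity of colon over sum, one obtains
$$I'\colon\mathbf{x}_{F_m}=(I\colon\mathbf{x}_{F_m})+(x_{v_1},\ldots,x_{v_{m-1}}).$$
By Lemma~\ref{simplicial subclutter has linear quotients}(b) the first summand is generated by $s$ variables, and the simpliciality of $e$ forces each such generator $x_i$ to satisfy $i\notin N_{\mathcal{C}}[e]$: indeed, if $i\in N_{\mathcal{C}}[e]$ then every $d$-subset of $F_m\cup\{i\}$ sits inside the clique $N_{\mathcal{C}}[e]$ and hence belongs to $\mathcal{C}$, preventing $\mathbf{x}_{F_m\cup\{i\}}$ from lying in $I$. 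In particular none of $x_{v_1},\ldots,x_{v_{m-1}}$ appears among the generators of $I\colon\mathbf{x}_{F_m}$, so the two summands contribute disjoint sets of minimal variable generators and $s'=s+m-1$. Substituting then gives
$$\beta_{i,i+d}(J)=\beta_{i,i+d}(I)+\sum_{j=0}^{m-2}\binom{s+j}{i}+\binom{s+m-1}{i}=\beta_{i,i+d}(I)+\sum_{j=0}^{m-1}\binom{s+j}{i},$$
closing the induction.
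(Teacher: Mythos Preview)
Your proof is correct and follows essentially the same route as the paper: induction on $|A|$, with Theorem~\ref{inequality of multigraded betti numbers-thm} supplying the single-step increment $\binom{s'}{i}$ and Lemma~\ref{simplicial subclutter has linear quotients}(b) used to identify $s'=s+m-1$. Your treatment is slightly more explicit---you verify that $e$ stays simplicial over $\mathcal{C}'$ and unpack the disjointness argument for the generators of $I'\colon\mathbf{x}_{F_m}$---whereas the paper simply cites Lemma~\ref{simplicial subclutter has linear quotients}(b) for the count, but the underlying argument is the same.
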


\begin{proof}
Let $F \in A$. By virtue of Theorem~\ref{inequality of multigraded betti numbers-thm}, we have
\begin{align*}
\beta_{i,i+d} (I+\left( \textbf{\rm \textbf{x}}_F \right)) & = \sum\limits_{\textbf{{\rm\textbf{a}}} \in \mathbb{Z}^n \atop |\textbf{{\rm\textbf{a}}}|=i+d} \beta_{i, \textbf{{\rm\textbf{a}}}} (I+\left( \textbf{\rm \textbf{x}}_F \right)) \\
& = \beta_{i, i+d} (I) + \sum\limits_{\textbf{{\rm\textbf{a}}}} 1,
\end{align*}
where the sum is taken over all $\textbf{{\rm\textbf{a}}} \in \mathbb{Z}^n$ with $|\mathrm{supp}(\textbf{{\rm\textbf{a}}})|=i+d$, $F \subseteq \mathrm{supp} (\textbf{{\rm\textbf{a}}})$ and 
$$\left( x_j \colon \; j \in \mathrm{supp} (\textbf{{\rm\textbf{a}}}) \setminus F \right) \subset \left(I \colon \textbf{\rm \textbf{x}}_F \right).$$
Note that the ideal $I \colon \textbf{\rm \textbf{x}}_{F}$ is generated by a subset of variables, by Lemma~\ref{simplicial subclutter has linear quotients}(b). Counting the number of $\textbf{{\rm\textbf{a}}} \in \mathbb{Z}^n$ with the above properties, we get
\begin{equation*}
\beta_{i,i+d} (I+\left( \textbf{\rm \textbf{x}}_F \right)) = \beta_{i, i+d} (I) + {s \choose i},
\end{equation*}
where  $s$ is the number of  minimal generators of the ideal $I \colon  \textbf{\rm \textbf{x}}_F$. This proves the assertion in the case that $|A|=1$. Let now $A=\{F_1, \ldots F_l\}$ with $l>1$. Then $J=J'+ \left( \textbf{\rm \textbf{x}}_{F_l} \right)$, where $J'= I+\left( \textbf{\rm \textbf{x}}_{F_1}, \ldots, \textbf{\rm \textbf{x}}_{F_{l-1}} \right)$. By induction hypothesis on $l$, we conclude that
\begin{align*}
\beta_{i, i+d} (J) &= \beta_{i, i+d} (J') + {s_{F_l} \choose i} \\
&= \beta_{i, i+d}(I)+  \left(\sum\limits_{j=0}^{|A|-2} {s+j \choose i} \right)+ {s_{F_l} \choose i},
\end{align*}
where $s_{F_l}$ is the number of  minimal generators of $J'\colon \textbf{\rm \textbf{x}}_{F_l}$ and $s$ is the number of  minimal generators of $I \colon \textbf{\rm \textbf{x}}_{F}$ for an arbitrary element $F \in A \setminus \{F_l\}$. Note that $s_{F_l} = (l-1) + s=(|A|-1) + s$, by Lemma~\ref{simplicial subclutter has linear quotients}(b). This implies  the desired result.
\end{proof}

\begin{cor}\label{graded linear strands-2}
Let $\mathcal{D}$ be a simplicial subclutter of $\C$ obtained by the simplicial sequence $\textbf{\rm \textbf{e}}= e_1, \ldots, e_r$ and the sets $A_1, \ldots, A_r$, as defined in Definition~\ref{Definition of simplicial subclutter}. Let $I=I(\bar{\C})$ and $J=I(\bar{\mathcal{D}})$. Then
\begin{equation} \label{graded linear stand-formula}
\beta_{i, i+d} (J) = \beta_{i, i+d} (I) + \sum\limits_{k=1}^{r} \sum\limits_{j=0}^{|A_k|-1} {t_k+s_k+j \choose i},
\end{equation}
where 
\begin{itemize}
\item for $1 \leq k \leq r$, $s_k$ is the number of  minimal generators of the ideal $I \colon  \textbf{\rm \textbf{x}}_{F}$, where $F$ is an arbitrary element in $A_k$.
\item $t_1=0$, $t_k = |\{ F \in \mathop{\bigcup}\limits_{j=1}^{k-1} A_j \colon \; e_k \subset F \}|$, for $2 \leq k\leq r$.
\end{itemize}
In particular, 
$$\mathrm{pd}(S/J) = \mathop{\max}\limits_{1 \leq k\leq r} \{ \mathrm{pd}(S/I), t_k+s_k+|A_k| \}.$$
\end{cor}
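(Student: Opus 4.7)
The plan is to prove (\ref{graded linear stand-formula}) by induction on the length $r$ of the simplicial sequence, using Corollary~\ref{graded linear strands} as the engine. For $r=1$ the convention $t_1=0$ reduces (\ref{graded linear stand-formula}) exactly to the conclusion of Corollary~\ref{graded linear strands}, so there is nothing to prove.

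For the inductive step, set $\mathcal{D}'=\mathcal{C}\setminus A_1\setminus\cdots\setminus A_{r-1}$ and $J'=I(\bar{\mathcal{D}'})$. The data $(e_1,\ldots,e_{r-1};A_1,\ldots,A_{r-1})$ exhibits $\mathcal{D}'$ as a simplicial subclutter of $\mathcal{C}$, so the inductive hypothesis supplies the formula for $\beta_{i,i+d}(J')$. Since $e_r$ is simplicial over $\mathcal{D}'$ and $A_r$ is a nonempty subset of $\{F\in\mathcal{D}'\colon e_r\subset F\}$, applying Corollary~\ref{graded linear strands} at this last step yields
\begin{equation*}
\beta_{i,i+d}(J)=\beta_{i,i+d}(J')+\sum_{j=0}^{|A_r|-1}\binom{s'+j}{i},
\end{equation*}
where $s'=|\mathcal{G}(J'\colon\mathbf{x}_F)|$ for any $F\in A_r$. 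Combining with the inductive expression for $\beta_{i,i+d}(J')$ reduces the task to verifying the single identity $s'=t_r+s_r$.

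To establish $s'=t_r+s_r$, I would invoke Lemma~\ref{simplicial subclutter has linear quotients}(b): since $e_r$ is simplicial over $\mathcal{D}'$, the colon $J'\colon\mathbf{x}_F$ is generated by the variables $\{x_i\colon i\notin e_r,\ \{i\}\cup e_r\in\bar{\mathcal{D}'}\}$. The disjoint decomposition $\bar{\mathcal{D}'}=\bar{\mathcal{C}}\sqcup A_1\sqcup\cdots\sqcup A_{r-1}$ then partitions these variables into two groups: those with $\{i\}\cup e_r\in\bar{\mathcal{C}}$, which are in bijection with the minimal generators of $I\colon\mathbf{x}_F$ and hence number $s_r$; and those with $\{i\}\cup e_r\in A_m$ for some $m<r$, which by the very definition of $t_r$ number exactly $t_r$. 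This bookkeeping step is where I expect the main subtlety, since one must ensure that the non-variable contributions $\mathbf{x}_{G\setminus F}$ coming from $G\in A_m$ with $e_r\not\subset G$ are absorbed by the variable generators and do not inflate the count. Substituting $s'=t_r+s_r$ into the displayed equation above delivers (\ref{graded linear stand-formula}).

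For the projective-dimension assertion, Corollary~\ref{chordallin}(a) gives $\beta_{i,j}(I)=\beta_{i,j}(J)$ for all $j-i>d$, so the nonlinear parts of $\mathrm{pd}(I)$ and $\mathrm{pd}(J)$ agree. Hence $\mathrm{pd}(J)$ equals the maximum of $\mathrm{pd}(I)$ and the largest $i$ with $\beta_{i,i+d}(J)\neq 0$. Reading off (\ref{graded linear stand-formula}), the binomial $\binom{t_k+s_k+j}{i}$ is nonzero precisely when $i\le t_k+s_k+j$, so the largest contributing index in step $k$ is $t_k+s_k+|A_k|-1$. Passing from $\mathrm{pd}(J)$ to $\mathrm{pd}(S/J)$ by adding $1$ yields $\max\{\mathrm{pd}(S/I),\,\max_{1\le k\le r}(t_k+s_k+|A_k|)\}$, as claimed.
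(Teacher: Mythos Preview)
Your proof is correct and follows essentially the same route as the paper's: induction on $r$, applying Corollary~\ref{graded linear strands} to the passage $J'\to J$, and reducing everything to the identity $s'=t_r+s_r$, which the paper likewise justifies by Lemma~\ref{simplicial subclutter has linear quotients}(b). Your worry about non-variable contributions $\mathbf{x}_{G\setminus F}$ with $e_r\not\subset G$ is precisely what Lemma~\ref{simplicial subclutter has linear quotients}(b) dispatches in one stroke, since $e_r\in\mathrm{Simp}(\mathcal{D}')$ forces $J'\colon\mathbf{x}_F$ to be generated by variables; the paper simply cites the lemma without spelling out the partition you describe. Your treatment of the projective-dimension statement via Corollary~\ref{chordallin}(a) and reading off the top nonzero binomial index is also the same idea as the paper's, just phrased more compactly.
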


\begin{proof}
We have $J=I+\sum\limits_{k=1}^{r}\left( \textbf{\rm \textbf{x}}_F \colon F \in A_k \right)$. We use induction on $r$ to prove the assertion. For $r=1$,  Corollary~\ref{graded linear strands} gives the desired formula. Suppose  $r>1$.  Note that we have $J = J' + \left( \textbf{\rm \textbf{x}}_F \colon F \in A_r \right)$, where
$$J'= I + \sum\limits_{k=1}^{r-1} \left( \textbf{\rm \textbf{x}}_F \colon F \in A_k \right).$$
By Corollary~\ref{graded linear strands} and induction hypothesis we get
\begin{align*}
\beta_{i, i+d} (J) &= \beta_{i, i+d} (J') + \sum_{j=0}^{|A_r|-1} {s'_r+j \choose i} \\
&= \beta_{i, i+d}(I)+ \sum\limits_{k=1}^{r-1} \sum\limits_{j=0}^{|A_k|-1} {t_k+s_k+j \choose i} + \sum_{j=0}^{|A_r|-1} {s'_r+j \choose i},
\end{align*}
where $s_k$ and $t_k$ are as explained in the statement and $s'_r$ is the number of  minimal generators of $J' \colon \textbf{\rm \textbf{x}}_{F}$ for an arbitrary element $F \in A_r$. Note that $s'_r = t_r + s_r$ by Lemma~\ref{simplicial subclutter has linear quotients}(b). This completes the proof of the first part.

\medspace
Now we prove the second part of the statement. 
A direct consequence of (\ref{graded linear stand-formula}) and Corollary~\ref{chordallin}(a), yields that $\mathrm{pd}(S/I)  \leq \mathrm{pd}(S/J)$. Moreover, it follows from (\ref{graded linear stand-formula}) that $s_k+t_k+|A_k| \leq \mathrm{pd}(S/J) $, for all $1 \leq k \leq r$. Hence $\mathrm{pd}(S/J)  \leq {\max}_{1 \leq k\leq r} \{ \mathrm{pd}(S/I), t_k+s_k+|A_k| \}$. For the converse inequality, assume that $\mathrm{pd}(S/I)  < \mathrm{pd}(S/J)$. Then, by virtue of Corollary~\ref{chordallin}(a) we conclude that, $\beta_{\mathrm{pd}(S/J), \mathrm{pd}(S/J) +d} (S/J) \neq 0$. Note that $\beta_{\mathrm{pd}(S/J), \mathrm{pd}(S/J)+d} (S/I) = 0$, because $\mathrm{pd}(S/I)  < \mathrm{pd}(S/J)$. However, by (\ref{graded linear stand-formula}) we know that $\beta_{\mathrm{pd}(S/J), \mathrm{pd}(S/J)+d} (S/J) \neq 0$ if and only if there exists $1 \leq k \leq r$ such that $s_k+t_k+|A_k| \geq \mathrm{pd}(S/J)$. This completes the proof.
\end{proof}

In Corollary~\ref{chordallin}(c) we saw that for a simplicial subclutter $\mathcal{D}$ of a $d$-uniform clutter $\C$, 
$I(\bar{\C})$ has a linear resolution if and only if so does $I(\bar{\mathcal{D}})$.
One may ask if this statement holds  when we replace ``linear resolution" with ``linear quotients". 
In the following we show that the ``only if" direction is true, while the other one is not. First we need the following lemma.

\begin{lem}  \label{simplicial subclutter has linear quotients}
Let $\C$ be a $d$-uniform clutter on the vertex set $[n]$ and let $I= I (\bar{\C})$. Let $e$ be a $(d-1)$-subset of $[n]$. Then
\begin{itemize}
\item[\rm (a)] $e \in \mathrm{Simp}(\C)$ if and only if for all $v \in I$, there exists $i \in  [n]\setminus N_{\C}[e]$ such that $x_i|v$.
\item[\rm (b)] {\rm(\cite{Big})} If $e \in \mathrm{Simp}(\C)$ and $F$ is a circuit of $\C$ which contains $e$, then $I \colon \textbf{\rm \textbf{x}}_{F} = \left(x_i \colon \, x_i \textbf{\rm \textbf{x}}_{e} \in I \right)$.
\item[\rm (c)] If $I$ has linear quotients with respect to $\xb_{F_1}, \ldots, \xb_{F_r}$, $e\in \mathrm{Simp}(\C)$ and $F$ is a circuit of $\C$ which contains $e$, then the ideal $I\left(\bar{\mathcal{C}}\right)+ \left( \xb_F \right)$ has also linear quotients with respect to $\xb_{F_1}, \ldots, \xb_{F_r}, \xb_F$.
\end{itemize}
\end{lem}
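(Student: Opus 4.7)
The plan is to prove (a) first, use it as the key tool in (b), and derive (c) immediately from (b). Part (a) is essentially a translation of the definition of a simplicial maximal subcircuit into the language of the generators of $I(\bar{\C})$, so I will unwind the definitions. Since $e \in \mathrm{Simp}(\C)$ is equivalent to $N_\C[e] \in \Delta(\C)$, and since $\Delta(\C)$ records those subsets of $[n]$ whose $d$-subsets all lie in $\C$, the condition $e \in \mathrm{Simp}(\C)$ says that no $d$-subset of $N_\C[e]$ belongs to $\bar{\C}$. Reading this through the correspondence between $\bar{\C}$ and the minimal generating set of $I$ gives: every minimal generator $\mathbf{x}_G$ of $I$ has some variable $x_i$ with $i \in [n]\setminus N_\C[e]$. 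Then a routine divisibility argument passes from minimal generators to arbitrary elements of $I$.

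For (b), I would prove the two inclusions separately. For $(x_i : x_i \mathbf{x}_e \in I) \subseteq I : \mathbf{x}_F$, note that $x_i \mathbf{x}_e \in I$ forces $e \cup \{i\}$ to be a $d$-subset in $\bar{\C}$, and since $e \subset F$ one checks that $\mathbf{x}_{e\cup\{i\}}$ divides $x_i\mathbf{x}_F$, giving $x_i \in I : \mathbf{x}_F$. The reverse inclusion is where part (a) really pays off: given a monomial $u \in I : \mathbf{x}_F$, pick a minimal generator $\mathbf{x}_G \in I$ dividing $u \mathbf{x}_F$. By (a) there exists $j \in G$ with $j \notin N_\C[e]$; since $F \subseteq N_\C[e]$ (which holds because $e \in \mathrm{Simp}(\C)$ and $F = e \cup \{v\}$ for some $v \in N_\C[e]\setminus e$), this index $j$ cannot lie in $F$, so $x_j$ must divide $u$. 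Finally $j \notin N_\C[e]$ means $e \cup \{j\} \notin \C$, hence $x_j \mathbf{x}_e \in I$, completing the inclusion.

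Part (c) is a short deduction. In the ordered generating sequence $\mathbf{x}_{F_1},\dots,\mathbf{x}_{F_r},\mathbf{x}_F$, the first $r$ colon ideals are unchanged from those of $I$, and the final one is precisely $(\mathbf{x}_{F_1},\dots,\mathbf{x}_{F_r}) : \mathbf{x}_F = I : \mathbf{x}_F$. By (b) this equals $(x_i : x_i\mathbf{x}_e \in I)$, which is generated by variables and therefore by linear forms, so linear quotients are preserved.

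The main obstacle is the reverse inclusion in (b); everything else is either a definitional unwinding (part (a)) or a one-line consequence (part (c)). The subtle point in that inclusion is seeing that the variable supplied by (a) must actually divide $u$ rather than $\mathbf{x}_F$, and for this the hypothesis $F \subseteq N_\C[e]$ (guaranteed by $F = e\cup\{v\}$ with $v \in N_\C[e]$ and by $e \in \mathrm{Simp}(\C)$) is essential.
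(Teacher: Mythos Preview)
The paper does not actually supply a proof of this lemma: the statement is recorded without a \texttt{proof} environment, and part~(b) is explicitly attributed to the external reference~\cite{Big}. So there is no ``paper's own proof'' to compare against.

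That said, your argument is correct and is the natural one. Part~(a) is exactly the dictionary between $N_\C[e]\in\Delta(\C)$ and the supports of the generators of $I(\bar\C)$; your passage from minimal generators to arbitrary monomials in $I$ is the right way to read the quantifier ``for all $v\in I$''. In part~(b), your forward inclusion is fine (one small remark: you might note explicitly that $x_i\mathbf{x}_e\in I$ forces $i\notin e$, since otherwise $x_i\mathbf{x}_e$ has support of size $d-1$ and cannot be divisible by a squarefree degree-$d$ generator). The reverse inclusion is the heart of the matter and you have identified the key step precisely: the index $j\in G\setminus N_\C[e]$ produced by~(a) cannot lie in $F$ because $F\subseteq N_\C[e]$, so $x_j$ divides $u$, and $j\notin N_\C[e]$ gives $e\cup\{j\}\in\bar\C$. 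Part~(c) is indeed a one-line consequence of~(b).
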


\begin{cor} \label{simplicial subclutter has linear quotients-cor}
Let $\mathcal{C}$ be a $d$-uniform clutter such that the ideal $I\left(\bar{\mathcal{C}}\right)$ has linear quotients. If $\mathcal{D}$ is a simplicial subclutter of $\mathcal{C}$, then the ideal $I\left(\bar{\mathcal{D}}\right)$ has linear quotients and hence linear resolution.
\end{cor}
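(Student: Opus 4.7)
The plan is to build the ideal $I(\bar{\mathcal{D}})$ from $I(\bar{\mathcal{C}})$ by repeatedly adding one generator at a time, and apply Lemma~\ref{simplicial subclutter has linear quotients}(c) at each step. Let $\textbf{\rm \textbf{e}} = e_1, \ldots, e_t$ together with $A_1, \ldots, A_t$ witness that $\mathcal{D}$ is a simplicial subclutter of $\mathcal{C}$. Since $\bar{\mathcal{D}} = \bar{\mathcal{C}} \cup A_1 \cup \cdots \cup A_t$, we have
$$I(\bar{\mathcal{D}}) = I(\bar{\mathcal{C}}) + \bigl(\textbf{\rm \textbf{x}}_F \colon F \in A_1 \cup \cdots \cup A_t\bigr).$$
Thus, interpolating between $I(\bar{\mathcal{C}})$ and $I(\bar{\mathcal{D}})$ amounts to adjoining the $\textbf{\rm \textbf{x}}_F$'s block by block.

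First, I would show by induction on $|A_1|$ that $I(\bar{\mathcal{C} \setminus A_1})$ has linear quotients. The base case $|A_1| = 1$ is precisely Lemma~\ref{simplicial subclutter has linear quotients}(c). For the step, suppose we have already added a proper, nonempty subset $B \subsetneq A_1$ of circuits and obtained linear quotients for $I(\bar{\mathcal{C} \setminus B})$. For any remaining $F \in A_1 \setminus B$, the $(d-1)$-set $e_1$ must still be simplicial over $\mathcal{C} \setminus B$: indeed, the closed neighborhood shrinks from $N_\mathcal{C}[e_1]$ by deleting exactly those $v$ with $e_1\cup\{v\}\in B$, and any subset of a face of $\Delta(\mathcal{C})$ that avoids each removed circuit is a face of $\Delta(\mathcal{C}\setminus B)$. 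Since $F$ is a circuit of $\mathcal{C}\setminus B$ containing $e_1$, Lemma~\ref{simplicial subclutter has linear quotients}(c) applies and $I(\bar{\mathcal{C} \setminus B}) + (\textbf{\rm \textbf{x}}_F)$ inherits linear quotients.

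Next, I would iterate over $k = 2, \ldots, t$. By definition of simplicial subclutter, $e_k$ is simplicial over $\mathcal{C} \setminus A_1 \setminus \cdots \setminus A_{k-1}$, and by the previous paragraph $I(\bar{\mathcal{C} \setminus A_1 \setminus \cdots \setminus A_{k-1}})$ has linear quotients. Applying the same successive-addition argument with simplicial element $e_k$ and the block $A_k$, we conclude that $I(\bar{\mathcal{C} \setminus A_1 \setminus \cdots \setminus A_{k}})$ has linear quotients. After $t$ iterations we reach $I(\bar{\mathcal{D}})$, which therefore has linear quotients. Since $\mathcal{D}$ is $d$-uniform, $I(\bar{\mathcal{D}})$ is generated in the single degree $d$, and \cite[Proposition~8.2.1]{HHBook} gives the linear resolution.

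The only real obstacle is the auxiliary verification that simpliciality of $e_i$ persists when a single circuit through $e_i$ is removed; this is what licenses the iterated use of Lemma~\ref{simplicial subclutter has linear quotients}(c) and must be written out carefully via the definitions of $N_{\cdot}[e_i]$ and $\Delta(\cdot)$. Everything else is a bookkeeping induction on the number of circuits adjoined.
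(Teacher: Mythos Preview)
Your proposal is correct and follows essentially the same approach as the paper: both argue by induction on the number of circuits removed and invoke Lemma~\ref{simplicial subclutter has linear quotients}(c) at each step. The paper packages the induction slightly differently---as a single induction on $|\bar{\mathcal{D}}|$, peeling off one circuit $F$ at the end to produce an intermediate simplicial subclutter $\mathcal{D}'$ with $\mathcal{D}=\mathcal{D}'\setminus\{F\}$---whereas you do a nested induction over the blocks $A_k$ and then within each block; but the content is identical, and in fact your version makes explicit the persistence-of-simpliciality check that the paper's proof tacitly uses when asserting the existence of such a $\mathcal{D}'$.
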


\begin{proof}
We use induction on $|\bar{\mathcal{D}}|$ to prove the assertion. Since $\bar{\mathcal{C}} \subseteq \bar{\mathcal{D}}$, the induction base lies on $\mathcal{D}=\mathcal{C}$. In this case, the result follows from our hypothesis. Assume that the results holds for all simplicial subclutter $\mathcal{D}'$ of $\mathcal{C}$ with $|\bar{\mathcal{D}'}|< |\bar{\mathcal{D}}|$. Now by Definition~\ref{Definition of simplicial subclutter} there exists a $d$-uniform clutter $\mathcal{D}'$, $e \in \mathrm{Simp} \left( \mathcal{D}' \right)$ and $F \in \mathcal{D}'$ containing $e$, such that
\begin{itemize}
\item[(i)] $\mathcal{D}'$ is a simplicial subclutter of $\mathcal{C}$, and
\item[(ii)] $\mathcal{D} = \mathcal{D}' \setminus \{F\}$.
\end{itemize}
Then$|\bar{\mathcal{D}'}|< |\bar{\mathcal{D}}|$ and so induction hypothesis  implies that the ideal $I \left( \bar{\mathcal{D}}' \right)$ has linear quotients. Note that $I \left( \bar{\mathcal{D}} \right) = I \left( \bar{\mathcal{D}}' \right) + \left( \textbf{x}_F \right)$. Lemma~\ref{simplicial subclutter has linear quotients}(b) now yields the desired conclusion.
\end{proof}

The following example shows that the converse of Corollary~\ref{simplicial subclutter has linear quotients-cor} is not true in general.

\begin{ex}
Let $\C$ be a $d$-uniform chordal clutter such that the ideal $I(\bar{\C})$ does not have linear quotients; such clutter exists, see for example \cite[Example 3.14]{BYZ}. Then $\varnothing$ is a simplicial subclutter of $\C$ and $I(\bar{\varnothing})= I(\C_{n,d})$ has linear quotients. Consequently,  the circuit ideal of the complement of a simplicial subclutter of a clutter $\C$ may have linear quotients, while the original circuit ideal $I(\bar{\C})$  may not.
\end{ex}

\subsection{Simplicial subclutters and subadditivity problem}
Let $S=\mathbb{K}[x_1, \ldots, x_n]$ be the polynomial ring over a field $\mathbb{K}$ endowed with standard grading (i.e. $\deg (x_i) =1$, for all $1 \leq i \leq n$). For a homogeneous ideal $I \subset S$, let 
\begin{align*}
t_i(I) & = \max \{ j \colon \quad \beta_{i,j}(S/I) \neq 0 \} \\
& = \max \{ j \colon \quad \mathrm{Tor}_{i}^{S} \left( \mathbb{K}, {S/I} \right)_j \neq 0 \}
\end{align*}
and $t_i(I)= - \infty$, if it happens that $\mathrm{Tor}_{i}^{S} \left( \mathbb{K}, {S/I} \right) = 0$. Also define
$$r_i(I) = t_i(I) -i.$$

A homogeneous ideal $I \subset S$ is said to satisfy the \textit{subadditivity} condition, if 
\begin{equation} \label{Subadditivity problem 2}
t_{i+j} \left( {I} \right) \leq t_i \left( {I} \right) + t_j \left( {I} \right),
\end{equation}
for all $i,j$ with $i+j \leq \mathrm{pd}(S/I)$.

Subadditivity problem has been studied in \cite{Nevo, Bigdeli-Herzog, Conca, ECU, Faridi, Herzog-Srinivasan, Khoury-Sirinivasan, McCullough, YazdanPour}. No counterexample is known for the validity of (\ref{Subadditivity problem 2}) for monomial ideals, while for arbitrary homogeneous ideals, (\ref{Subadditivity problem 2}) is not true in general (c.f. \cite[Sec. 6]{Conca}).

\medskip
Let $\mathcal{D}$ be a simplicial subclutter of a $d$-uniform clutter $\mathcal{C}$. Let $I= I(\bar{\mathcal{C}})$ and $J= I(\bar{\mathcal{D}})$ be the corresponding associated ideals. We will show that $I$ satisfies the subadditivity condition, if and only if so does $J$. Indeed, we show a stronger result, which states that the Betti diagram of $I$ has a \textit{special} shape, if and only if so does $J$. Here \textit{special} shape means the definition as it is stated in \cite{Bigdeli-Herzog}. In the following, we recall the required prerequisite to reach this aim.


\begin{defn}[{\cite[Definition 1]{Bigdeli-Herzog}}] \label{definition of special shape}
Let $I$ be a graded ideal. The Betti diagram of $S/I$ with regularity $c$ is said to have a \textit{special} shape, if
\begin{itemize}
\item[\rm (i)] $r_0(I) \leq r_1(I)\leq \cdots \leq  r_{g}(I)$ where $g$ is the smallest integer such that $r_{g}(I)=c$.
\item[\rm (ii)] $r_{i+1}(I) \leq r_i (I)$, for $g \leq i \leq \mathrm{pd} (S/I)$.
\end{itemize}
\end{defn}

As it is mentioned in \cite{Bigdeli-Herzog}, not all Betti diagrams of monomial ideals have a special shape. This is not even the case for the edge ideal of a graph. A simplest such example is the edge ideal of the $5$-cycle whose Betti diagram violates condition (i) of Definition~\ref{definition of special shape}. However, it is shown in \cite[Lemma 1]{Bigdeli-Herzog} that any homogenous ideal whose Betti diagram has a special shape satisfies subadditivity.

\begin{prop} \label{same non-linear strands and subadditivity}
Let $0 \neq I \subseteq J$ be homogeneous ideals generated in degree $d$ such that
$$\beta_{i,j} (I)= \beta_{i,j}(J),$$
for all $i,j$ with $j-i>d$. Then
\begin{itemize}
\item[\rm (a)] $I$ satisfies the subadditivity condition if and only if $J$ does so.
\item[\rm (b)] $I$ has a special Betti diagram if and only if $J$ does so.
\end{itemize}
\end{prop}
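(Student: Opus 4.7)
The plan is to reduce both parts to a single comparison lemma about the invariants $r_i$. First, translate the hypothesis using $\beta_{k,j}(S/M) = \beta_{k-1,j}(M)$ for $k\geq 1$: the assumption $\beta_{i,j}(I)=\beta_{i,j}(J)$ for $j-i>d$ becomes $\beta_{k,j}(S/I) = \beta_{k,j}(S/J)$ for every $k\geq 1$ and every $j$ with $j-k\geq d$. In particular, the non-linear part of the Betti tables of $S/I$ and $S/J$ agree, so $\mathrm{reg}(S/I)=\mathrm{reg}(S/J)=:c$ (if either exceeds $d-1$, equality follows from the agreement; otherwise both have $d$-linear resolutions and $c=d-1$).

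The key intermediate step is the following \emph{Comparison Lemma}: for every $k\geq 1$, if $\beta_k(S/I)$ and $\beta_k(S/J)$ are both nonzero, then $r_k(I)=r_k(J)$; and if exactly one of them vanishes, then the other has $r_k=d-1$. The proof is short. Since $I$ and $J$ are generated in degree $d$, any nonzero $\beta_k(S/\cdot)$ with $k\geq 1$ forces $r_k\geq d-1$. If $r_k(I)\geq d$, then $t_k(I)$ lies in the range $j-k\geq d$ where the Betti tables agree, which gives $r_k(J)\geq r_k(I)$; by symmetry, equality holds. Otherwise $r_k(I)=d-1$, which forces the entire non-linear part of $\beta_{k,\bullet}(S/I)$ to vanish, and the hypothesis forces the same for $S/J$, leaving $r_k(J)\in\{d-1,-\infty\}$.

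For part (a), assume $I$ is subadditive and take $i,j\geq 1$ with $i+j\leq \mathrm{pd}(S/J)$. By the no-gap property of minimal free resolutions, $\beta_i(S/J)$ and $\beta_j(S/J)$ are both nonzero. If $i+j\leq \mathrm{pd}(S/I)$ as well, then the Comparison Lemma yields $r_k(I)=r_k(J)$ for $k\in\{i,j,i+j\}$ and the subadditivity inequality transfers verbatim. If $i+j>\mathrm{pd}(S/I)$, then $\beta_{i+j}(S/I)=0$ and the Lemma forces $r_{i+j}(J)=d-1$, while $r_i(J)+r_j(J)\geq 2(d-1)\geq d-1$ for $d\geq 1$ (the degenerate case $d=0$ is trivial since then $I=S$). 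The index $i=0$ case reduces to $r_0=0$, which is automatic. The converse implication is symmetric.

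For part (b), first verify that the index $g$ (smallest $i$ with $r_i=c$) is the same for $I$ and $J$: if $c>d-1$, the Comparison Lemma forces $r_i(I)=r_i(J)$ at any position realizing $c$, and if $c=d-1$ both ideals have linear resolutions with $g=1$. Condition (i) of the special shape concerns only $0\leq i\leq g$, where all Betti numbers are nonzero by no-gap, so $r_i(I)=r_i(J)$ and (i) transfers in both directions. For condition (ii), argue in the same manner as in part (a): when both projective dimensions coincide the $r_i$'s match, and on the indices strictly between $\mathrm{pd}(S/I)$ and $\mathrm{pd}(S/J)$ the excess Betti numbers lie entirely on the linear strand, yielding $r_i=d-1$ and a trivially non-increasing tail. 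The main obstacle I anticipate is precisely controlling the possibly different tails $\mathrm{pd}(S/I)\neq \mathrm{pd}(S/J)$, which is exactly what the second clause of the Comparison Lemma is tailored to handle.
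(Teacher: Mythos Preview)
Your proof is correct and follows essentially the same approach as the paper: both reduce the problem to comparing the invariants $t_i$ (or equivalently $r_i$) of $S/I$ and $S/J$, showing they agree where both are defined and equal $d+i-1$ on any excess range, and then use this to transfer subadditivity and the special-shape conditions.

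The one noteworthy difference is in how this comparison is established. The paper first invokes \cite[Lemma~2.3]{BYZ}, which uses the containment $I\subseteq J$ to obtain $\beta_{i,i+d}(I)\leq\beta_{i,i+d}(J)$; from this it deduces $\mathrm{pd}(S/I)\leq\mathrm{pd}(S/J)$ and the explicit formula
\[
t_i(J)=\begin{cases} t_i(I), & 0\leq i\leq \mathrm{pd}(S/I),\\ d+i-1, & \mathrm{pd}(S/I)<i\leq \mathrm{pd}(S/J).\end{cases}
\]
Your Comparison Lemma is phrased symmetrically in $I$ and $J$ and never uses $I\subseteq J$; it therefore yields the proposition under the weaker hypothesis that $I$ and $J$ are merely two ideals generated in degree $d$ with matching non-linear Betti numbers. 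This is a small but genuine improvement: the external reference is eliminated and the containment hypothesis becomes superfluous. The trade-off is that the paper's asymmetric formula makes the case analysis in part~(a) marginally shorter (only the direction ``$I$ subadditive $\Rightarrow$ $J$ subadditive'' needs the tail argument), whereas your symmetric setup requires running the tail argument in both directions.
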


\begin{proof}
First of all note that $\beta_{i, i+d} (I) \leq \beta_{i, i+d}(J)$, for all $i$ by \cite[Lemma 2.3]{BYZ}. Hence our assumption on the Betti numbers implies that $ \mathrm{pd}(S/I) \leq \mathrm{pd}(S/J)$, and
\begin{equation} \label{internal equation 15}
t_i(J) = \begin{cases}
t_i (I), & \text{if } 0 \leq i \leq \mathrm{pd}(S/I) \\
d+i-1, & \text{if } \mathrm{pd}(S/I)< i \leq \mathrm{pd}(S/J).
\end{cases}
\end{equation}

(a) Assume that the ideal $J$ satisfies the subadditivity condition. Then it follows from (\ref{internal equation 15}) that for all $i,j$ with $i+j \leq \mathrm{pd}(S/I)$, one has 
\begin{align*}
t_{i+j}(I) = t_{i+j}(J) \leq t_i(J) + t_j(J) = t_i(I) + t_j(I). 
\end{align*}
So that the ideal $I$ satisfies  the sabadditivity condition as well. For the converse, suppose that the ideal $I$ satisfies the subadditivity condition. If $i+j \leq \mathrm{pd}(S/I)$, then as above we have
\begin{align*}
t_{i+j}(J)  \leq  t_i(J) + t_j(J).
\end{align*}
 Assume that $\mathrm{pd}(S/I) < i+j \leq \mathrm{pd}(S/J)$. Then, 
\begin{align*}
t_{i+j}(J) =i+j+d-1 \leq (i+d-1) + (j+d-1) \leq t_i(J) + t_j(J).
\end{align*}
(b) A similar argument as in (a) yields the desired conclusion.
\end{proof}

Proposition~\ref{same non-linear strands and subadditivity} in combination with Corollary~\ref{chordallin}(a) leads to the following corollary.
\begin{cor}
Let $\C$ be a $d$-uniform clutter and $\mathcal{D}$ be a simplicial subclutter of $\C$. Then 
\begin{itemize}
\item[\rm (a)] the ideal $I(\bar{\C})$ satisfies subadditivity condition if and only if the ideal $I(\bar{\mathcal{D}})$ does so. 
\item[\rm (b)] the ideal $I(\bar{\C})$ has a special Betti diagram if and only if $I(\bar{\mathcal{D}})$ does so.
\end{itemize}
\end{cor}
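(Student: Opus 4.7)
The proof is a direct application of Proposition~\ref{same non-linear strands and subadditivity} with the choice $I = I(\bar{\mathcal{C}})$ and $J = I(\bar{\mathcal{D}})$, so the plan reduces to verifying the two hypotheses of that proposition. Since $\mathcal{D} \subsetneq \mathcal{C}$ by the very definition of a simplicial subclutter, taking complements in $\mathcal{C}_{n,d}$ gives $\bar{\mathcal{C}} \subseteq \bar{\mathcal{D}}$, hence $I \subseteq J$; both ideals are generated in degree $d$ because $\mathcal{C}$ and $\mathcal{D}$ are $d$-uniform. The remaining hypothesis, namely the equality $\beta_{i,j}(I) = \beta_{i,j}(J)$ for all $i$ and all $j$ with $j - i > d$, is supplied verbatim by Corollary~\ref{chordallin}(a). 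With these two inputs in place, parts (a) and (b) of Proposition~\ref{same non-linear strands and subadditivity} immediately yield the two desired equivalences of the corollary.

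The only point requiring a separate word is the degenerate case $\mathcal{C} = \mathcal{C}_{n,d}$, where $I = 0$ and the proposition does not literally apply (it assumes $I \neq 0$). In this case both the subadditivity inequality and the special-shape condition are vacuous for $I = 0$, so one need only verify that $J$ also satisfies them. But here $\mathcal{D}$ is a simplicial subclutter of the complete clutter, and by the results of Section~\ref{complete} the ideal $J$ has a $d$-linear resolution; this forces every finite $r_i(J)$ to equal $d - 1$, whence both conditions hold trivially. Thus no genuine obstacle arises, and the corollary holds in full generality.
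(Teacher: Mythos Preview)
Your proof is correct and follows exactly the same approach as the paper, which simply states that the corollary follows from Proposition~\ref{same non-linear strands and subadditivity} combined with Corollary~\ref{chordallin}(a). Your treatment is in fact more careful than the paper's, since you explicitly verify the containment $I \subseteq J$ and address the degenerate case $\mathcal{C} = \mathcal{C}_{n,d}$ (where $I = 0$ and the proposition does not literally apply), which the paper passes over in silence.
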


\section{Simplicial  clutters} \label{complete}

In this section, we consider the simplicial subclutters of a complete clutter. A  simplicial subclutter of a complete clutter is called a {\em simplicial clutter}. Since $I(\overline{\C_{n,d}})=0$, one would expect that the Betti table of the ideals of the simplicial clutters can be explicitly determined through the data obtained from the simplicial sequences.  
The following result which is a direct conclusion of Corollaries~\ref{simplicial subclutter has linear quotients-cor} and \ref{graded linear strands-2}, gives rise to computing Betti numbers of simplicial subclutters of the complete clutters.

\begin{cor}\label{resolution of a simplicial subcluuter of complete clutter-cor}
	Let $\C$ be a simplicial clutter  obtained from $\C_{n,d}$ by the simplicial sequence $\textbf{e}= e_1, \ldots, e_r$ and the sets $A_1, \ldots, A_r$ as defined in Definition~\ref{Definition of simplicial subclutter}. Then the ideal $I=I(\bar{\C})$ has linear quotients and 
	\begin{equation} \label{resolution of a simplicial subcluuter of complete clutter}
	\beta_{i,i+d} (I) = \sum\limits_{k=1}^{r} \sum\limits_{j=0}^{|A_k|-1} {t_k+j \choose i},
	\end{equation}
	where $t_1=0$ and $t_k = |\{ F \in \mathop{\bigcup}\limits_{j=1}^{k-1} A_j \colon \; e_k \subset F \}|$, for $2 \leq k\leq r$. 
\end{cor}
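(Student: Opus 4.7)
The statement is a clean specialization of Corollaries~\ref{simplicial subclutter has linear quotients-cor} and~\ref{graded linear strands-2} to the degenerate case in which the ambient clutter is $\C_{n,d}$ itself. My plan is to simply apply these two results, with a moment of care about the edge case that the starting ideal is zero.

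For the linear quotients assertion, I would invoke Corollary~\ref{simplicial subclutter has linear quotients-cor} with the ``outer'' clutter taken to be $\C_{n,d}$. Since $\overline{\C_{n,d}} = \varnothing$, we have $I(\overline{\C_{n,d}}) = 0$, which vacuously has linear quotients (no generators at all). By definition, a simplicial clutter $\C$ is a simplicial subclutter of $\C_{n,d}$, so Corollary~\ref{simplicial subclutter has linear quotients-cor} immediately gives that $I(\bar{\C})$ has linear quotients.

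For the Betti number formula, I would apply Corollary~\ref{graded linear strands-2} with $\C_{n,d}$ in place of $\C$ and with our $\C$ in place of $\D$. Its conclusion becomes
\begin{equation*}
\beta_{i,i+d}(I(\bar{\C})) \;=\; \beta_{i,i+d}(I(\overline{\C_{n,d}})) \;+\; \sum_{k=1}^{r}\sum_{j=0}^{|A_k|-1} \binom{t_k + s_k + j}{i}.
\end{equation*}
The only thing to observe is that $I(\overline{\C_{n,d}}) = 0$ kills two terms in this formula: first, $\beta_{i,i+d}(0) = 0$ for all $i$; and second, for any $F \in A_k$, the colon ideal $0\colon \mathbf{x}_F = 0$ has no minimal generators, so $s_k = 0$ (consistent with Lemma~\ref{simplicial subclutter has linear quotients}(b), which expresses $I\colon\mathbf{x}_F$ as the ideal generated by those $x_i$ with $x_i \mathbf{x}_{e_k}\in I$, of which there are none when $I=0$). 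Substituting $s_k=0$ and $\beta_{i,i+d}(I(\overline{\C_{n,d}}))=0$ into the displayed equation produces exactly the formula \eqref{resolution of a simplicial subcluuter of complete clutter}.

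The proof has essentially no obstacle: it is purely a matter of feeding the degenerate ``outer clutter'' $\C_{n,d}$ into the two earlier corollaries and checking that the $I=0$ case is handled correctly. The only potential pitfall is the implicit convention that the zero ideal has linear quotients and that $0\colon \mathbf{x}_F = 0$, both of which are standard and are consistent with the formulas used above.
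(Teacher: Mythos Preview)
Your proposal is correct and matches the paper's own treatment: the paper explicitly states that this corollary is a direct conclusion of Corollaries~\ref{simplicial subclutter has linear quotients-cor} and~\ref{graded linear strands-2}, and you carry out exactly that specialization, correctly noting that $I(\overline{\C_{n,d}})=0$ forces both $\beta_{i,i+d}(0)=0$ and $s_k=0$.
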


According to Corollary~\ref{resolution of a simplicial subcluuter of complete clutter-cor}, 
 a class of square-free monomial ideals with linear quotients is the class of ideals obtained from simplicial subclutters of the complete clutter.  
Another class of square-free monomial ideals with linear quotients is the class of square-free stable ideals (see \cite[Problem~8.8(b)]{HHBook}). 
A  square-free monomial ideal $I \subset S$ is called {\em square-free stable} if for all square-free monomials $u \in I$ and for all $j< m(u)$ such that $x_j$ does not divide $u$ one has $x_j(u/x_{m(u)})\in I$, where $m(u)=\max\{i:\ x_i\text{ divides } u\}$. Note that this exchange property  needs only be checked for the monomials in $\mathcal{G}(I)$ (see \cite[Problem~6.9]{HHBook}). In the following theorem we see that the class of equigenerated square-free stable ideas is contained in the class of ideals associated to simplicial subclutters of complete clutters.
 
\begin{thm} \label{stable ideals are simplicial subclutters of complete clutters}
Let $I$ be a square-free stable ideal in $S$ generated in degree $d$, and let   $\mathcal{C}$ be a $d$-uniform clutter on the vertex set $[n]$ with $I=I(\bar{\mathcal{C}})$. Then $\mathcal{C}$ is a simplicial clutter. 
\end{thm}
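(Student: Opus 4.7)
The plan is to exhibit $\mathcal{C}$ as a simplicial subclutter of $\mathcal{C}_{n,d}$ via an explicit simplicial sequence that peels off the circuits of $\bar{\mathcal{C}}$ one at a time. List the generators of $I$ in ascending lexicographic order on their supports, writing $\bar{\mathcal{C}} = \{F_1, F_2, \ldots, F_N\}$ with $F_1 < F_2 < \cdots < F_N$. For each $k$ set $m_k := \max F_k$, $e_k := F_k \setminus \{m_k\}$, and $A_k := \{F_k\}$, and put $\mathcal{C}^{(k)} := \mathcal{C}_{n,d} \setminus A_1 \setminus \cdots \setminus A_k$. Then $\mathcal{C}^{(N)} = \mathcal{C}$ holds by construction, so condition (c) of Definition~\ref{Definition of simplicial subclutter} is automatic and the theorem reduces to checking that $e_k$ is simplicial over $\mathcal{C}^{(k-1)}$ for every $k$.

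The first technical step is to compute $N_{\mathcal{C}^{(k-1)}}[e_k]$ explicitly. For $c < m_k$ with $c \notin e_k$, square-free stability applied to the generator $\mathbf{x}_{F_k}$ (whose largest variable is $x_{m_k}$) yields $\mathbf{x}_{e_k \cup \{c\}} \in I$, and since $I$ is equigenerated in degree $d$ one gets $e_k \cup \{c\} \in \bar{\mathcal{C}}$; a short case analysis on where $c$ sits relative to $i_1, \ldots, i_{d-1}$ shows $e_k \cup \{c\} < F_k$ in lex, so this $d$-set has already been removed at an earlier step. For $c \geq m_k$, the sorted tuple of $e_k \cup \{c\}$ agrees with that of $F_k$ in its first $d-1$ entries and has last entry $c \geq m_k$, so $e_k \cup \{c\} \geq F_k$ in lex, meaning it is still present in $\mathcal{C}^{(k-1)}$ regardless of whether it belongs to $\bar{\mathcal{C}}$. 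Consequently
$$N_{\mathcal{C}^{(k-1)}}[e_k] \;=\; e_k \cup \{m_k, m_k+1, \ldots, n\}.$$

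The second step is to show this neighborhood is a clique in $\mathcal{C}^{(k-1)}$. Any $d$-subset $G \subseteq e_k \cup [m_k,n]$ splits as $G = G_{\mathrm{low}} \sqcup G_{\mathrm{high}}$ with $G_{\mathrm{low}} \subseteq e_k = \{i_1 < \cdots < i_{d-1}\}$ and $\varnothing \neq G_{\mathrm{high}} \subseteq [m_k, n]$. If $G \notin \bar{\mathcal{C}}$ then $G \in \mathcal{C} \subseteq \mathcal{C}^{(k-1)}$; if $G \in \bar{\mathcal{C}}$, writing $G_{\mathrm{low}} = \{i_{\sigma(1)} < \cdots < i_{\sigma(d-p)}\}$ with $p = |G_{\mathrm{high}}|$ and $\sigma$ an increasing injection into $\{1,\ldots,d-1\}$, the first $d-p$ sorted entries of $G$ dominate those of $F_k$ coordinatewise (since $\sigma(\ell) \geq \ell$), while the $(d-p+1)$-th entry of $G$ is $\geq m_k$ and strictly exceeds the $(d-p+1)$-th entry of $F_k$ in all remaining cases except the borderline $G = F_k$. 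This forces $G \geq F_k$ in lex, so $G$ has not yet been removed (and $F_k$ itself is still present at the start of step $k$). I expect this position-by-position lex bookkeeping to be the main technical obstacle, but no deeper input beyond square-free stability is required.
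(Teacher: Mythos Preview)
Your proof is correct and follows essentially the same strategy as the paper: list the elements of $\bar{\mathcal{C}}$, remove them from $\mathcal{C}_{n,d}$ one at a time with $e_k = F_k \setminus \{m_k\}$ and $A_k = \{F_k\}$, compute $N_{\mathcal{C}^{(k-1)}}[e_k] = e_k \cup [m_k,n]$ using square-free stability, and then check this neighborhood is a clique. The one difference is the total order: you use lexicographic order on the sorted $d$-tuples (compare from the smallest entry), while the paper uses the lexicographic order on monomials induced by $x_1 < \cdots < x_n$ (compare from the largest variable, i.e., colex on the supports). Both choices work; the paper's order makes the clique verification a bit shorter, since there $\mathbf{x}_F < \mathbf{x}_{F_k}$ immediately forces $\max F \le m_k$, giving an instant contradiction once $F \subseteq e_k \cup [m_k,n]$ has two elements $\ge m_k$, whereas your order requires the position-by-position domination argument you describe.
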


\begin{proof}
Let $I=(\mathbf{x}_{F_1},\ldots,\mathbf{x}_{F_r})$ (i.e. $\bar{\mathcal{C}}=\{F_1,\ldots,F_r\}$). Suppose ${\bf x}_{F_1}<\cdots< {\bf x}_{F_r}$, where $<$ is the lexicographic  order induced by $x_1< \cdots < x_n$.  For $1\leq i\leq r$, let $$\mathcal{C}_i=\mathcal{C}\cup\{F_i,\ldots,F_r\}.$$ Note that $\C_1=\C_{n,d}$, $I_1:=I(\bar{\C}_1)=0$  and for $i>1$,  $$I_i:=I(\bar{\mathcal{C}_i})=(\mathbf{x}_{F_1},\ldots,\mathbf{x}_{F_{i-1}}).$$ It is seen that for $1\leq i\leq r$, setting $e_i=F_i\setminus \{m({\bf x}_{F_i})\}$ we have $N_{\C_i}[e_i]=e\cup\{m({\bf x}_{F_i}), m({\bf x}_{F_i})+1,\ldots,n\}$.
Let $F$ be a $d$-subset of $N_{\C_i}[e_i]$. We show that $F\in \C_i$. If $e\subset F$, then by definition of $N_{\C_i}[e_i]$ we have $F\in \C_i$. Suppose $e\not\subset F$,  $F=\{a_1,\ldots, a_d\}$ with $a_1<\cdots<a_d$. Since $m({\bf x}_{F_i})>k$ for all $k\in e$ we have $a_{d-1}, a_d\in \{m({\bf x}_{F_i}), m({\bf x}_{F_i})+1,\ldots,n\}$. If $F\notin \C_i$, then $\mathbf{x}_F\in I_i$ which implies that $\mathbf{x}_F< \mathbf{x}_{F_i}$. Hence $m(\mathbf{x}_F)=a_d\leq m(\mathbf{x}_{F_i})$. Since $a_d\in \{m({\bf x}_{F_i}), m({\bf x}_{F_i})+1,\ldots,n\}$ we have $m(\mathbf{x}_F)=a_d= m(\mathbf{x}_{F_i})$. Therefore $a_{d-1}< m(\mathbf{x}_{F_i})$ which is a contradiction.  Hence  $e_i$ is a simplicial maximal subcircuit of $\C_i$. It follows that  $\mathcal{C}=\mathcal{C}_{n,d}\setminus A_1\setminus \cdots\setminus A_r$ is a simlicial clutter, where $A_i=\{F_i\}$ with $e_i\subset F_i$. 

\end{proof}
Theorem~\ref{stable ideals are simplicial subclutters of complete clutters} allows us to use the formula given in Corollary~\ref{resolution of a simplicial subcluuter of complete clutter-cor} to compute the Betti numbers of $I$, in case that the ideal $I=I(\bar{\mathcal{C}})$ is a square-free stable ideal. Note that one may prove Theorem~\ref{stable ideals are simplicial subclutters of complete clutters} by induction on  $|\bar{\mathcal{C}}|$, (which is a shorter proof). But  we need the structure of the current proof in the following example.

\begin{ex}[Betti numbers of equigenerated square-free stable ideals]
Let 
$I=I(\bar{\C})=(\mathbf{x}_{F_1},\ldots,\mathbf{x}_{F_r})$ be a square-free stable ideal, where $\C$ is a $d$-uniform clutter on $[n]$.  Suppose ${\bf x}_{F_1}<\cdots< {\bf x}_{F_r}$, where $<$ is the lexicographic  order induced by $x_1< \cdots < x_n$.  
As seen in the proof of Theorem~\ref{stable ideals are simplicial subclutters of complete clutters} we have $\mathcal{C}=\mathcal{C}_{n,d}\setminus A_1\setminus \cdots\setminus A_r$, where $A_i=\{F_i\}$ with $F_i=e_i\cup\{m(\mathbf{x}_{F_i})\}$ and $e_i\in\mathrm{Simp}(\mathcal{C}_{n,d}\setminus A_1\setminus \cdots\setminus A_{i-1})$.
 
Now we compute $t_k$ (for $k>1$), where $t_k$ is  defined  in Corollary~\ref{resolution of a simplicial subcluuter of complete clutter-cor}. Indeed, we should count the number of  $F$ in $\{F_1,\ldots,F_{k-1}\}$ for which $e_k\subset F$. Since  $I$ is square-free stable  we have $x_i\mathbf{x}_{F_k}/x_{m(\mathbf{x}_{F_k})}\in I$ for all $i<m(\mathbf{x}_{F_k})$ such that $x_i$ does not divide $\mathbf{x}_{F_k}$. But  $x_i\mathbf{x}_{F_k}/x_{m(\mathbf{x}_{F_k})}< \mathbf{x}_{F_k}$. Thus  $\{i\}\cup e_k\in \{F_1,\ldots, F_{k-1}\}$ for all $i<m(\mathbf{x}_{F_k})$ with $i\notin e_k$. So the number of $F=\{i\}\cup e_{k}$ with $i<m(\mathbf{x}_{F_k})$ and $i\notin e_k$ in  $\{F_1,\ldots, F_{k-1}\}$ is  $(m(\mathbf{x}_{F_k})-1)-(d-1)$. Note that  $\{i\}\cup e_k\notin \{F_1,\ldots, F_{k-1}\}$ for $i>m(\mathbf{x}_{F_k})$, because ${\bf x}_{F_k}< x_i{\bf x}_{e_k}$ for $i>m(\mathbf{x}_{F_k})$. Consequently, $t_k=m(\mathbf{x}_{F_k})-d$ for $k>1$. It is  also clear that $m({\bf x}_{F_1})=d$. Thus $t_1=0= m({\bf x}_{F_1})-d$. 
Therefore we get the following
\begin{align*}
\beta_{i, i+d} (I) &=  \sum\limits_{k=1}^{r} \sum\limits_{j=0}^{|A_k|-1} {t_k+j \choose i}\tag{\text{using Corollary~\ref{resolution of a simplicial subcluuter of complete clutter-cor}}}\\
&=\sum\limits_{k=1}^{r} {m(\mathbf{x}_{F_k})-d \choose i}\\
&=\sum\limits_{u\in \mathcal{G}(I)} {m(u)-d \choose i}
\end{align*}
The last formula matches the formula given in \cite[Corollary~7.4.2]{HHBook} for square-free stable ideals.
\end{ex}

A sequence $\bm{\beta} =(\beta_0,\ldots,\beta_{n-1})$ of  integers is called the {\em Betti sequence} of a graded ideal if there exists  a graded ideal $I\subset S=\mathbb{K}[x_1,\ldots,x_n]$  such that $\beta_i=\beta_i(I)$, for all $i$. In \cite[Theorem~3.3]{MJAR}, it is proved that any Betti sequence of a graded ideal with linear resolution  is the Betti sequence of the ideal associated to a chordal clutter. Since the ideals of simplicial clutters have linear resolution too, the question arises whether all ideals which have a linear resolution occur as the circuit ideal of simplicial clutters.  In the following result we give an affirmative answer to this question.
\begin{thm} \label{all}
Let $\bm{\beta}=(\beta_0,\ldots,\beta_{n-1})$ be a sequence of integers. Then $\bm{\beta}$ is a Betti sequence of a graded  ideal with linear resolution if and only if $\bm{\beta}$ is the Betti sequence of the  circuit ideal of the complement of a simplicial clutter.
\end{thm}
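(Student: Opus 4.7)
The proof separates into two directions. The reverse implication is immediate from Corollary~\ref{simplicial subclutter has linear quotients-cor}: if $\C$ is a simplicial clutter, then $\C$ is a simplicial subclutter of $\C_{n,d}$, and since $I(\overline{\C_{n,d}})=0$ trivially has linear quotients, $I(\bar{\C})$ inherits linear quotients, hence a $d$-linear resolution by \cite[Proposition~8.2.1]{HHBook}. Thus its Betti sequence is a Betti sequence of a graded ideal with linear resolution.

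For the forward direction, let $I \subset S = \mathbb{K}[x_1,\ldots,x_n]$ be a graded ideal with $d$-linear resolution whose Betti sequence is $\bm{\beta}$. The plan is to produce a square-free stable ideal with the same Betti sequence and then invoke Theorem~\ref{stable ideals are simplicial subclutters of complete clutters} to extract a simplicial clutter. After extending the base field if necessary, we may assume $\mathbb{K}$ is an infinite field of characteristic zero; this does not alter $\bm{\beta}$. Let $J:=\mathrm{gin}_{\mathrm{revlex}}(I)$ denote the generic initial ideal with respect to the reverse lexicographic order. Since an ideal with $d$-linear resolution is componentwise linear, the Aramova--Herzog--Hibi theorem (see \cite[Theorem~8.2.22]{HHBook}) yields $\beta_{i,j}(J)=\beta_{i,j}(I)$ for all $i,j$. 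Moreover, in characteristic zero, $J$ is Borel-fixed and therefore strongly stable, and in particular stable.

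Next, polarize $J$ to obtain a square-free monomial ideal $J^{\mathrm{pol}}$ in a (possibly larger) polynomial ring; polarization preserves graded Betti numbers, so the Betti sequence of $J^{\mathrm{pol}}$ is still $\bm{\beta}$. It is standard that the polarization of a stable ideal is square-free stable, as the stable exchange condition carries over under the canonical renaming of the repeated variables (one checks directly that if $v \in \mathcal{G}(J)$ is stable under the exchange $x_j(v/x_{m(v)})$, the corresponding exchange on the polarization $u$ of $v$ lies in $J^{\mathrm{pol}}$). Applying Theorem~\ref{stable ideals are simplicial subclutters of complete clutters} to $J^{\mathrm{pol}}$ produces a simplicial clutter $\C$ such that $J^{\mathrm{pol}}=I(\bar{\C})$, whence $\C$ realizes the Betti sequence $\bm{\beta}$, completing the proof.

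The main obstacle is the justification of the chain of reductions, namely: that $\mathrm{gin}$ preserves the graded Betti numbers of a linearly-resolved ideal (componentwise linearity is the key ingredient), that $\mathrm{gin}$ is strongly stable in characteristic zero (the classical results of Galligo and Bayer--Stillman), and that the polarization of a stable ideal is square-free stable with preserved Betti numbers. All three ingredients are well known but deserve careful bookkeeping. As an alternative, one could instead invoke \cite[Theorem~3.3]{MJAR} to first realize $\bm{\beta}$ by the ideal of a chordal clutter and then attempt a direct conversion to a simplicial clutter; however, the route through square-free stable ideals is both shorter and respects the bridge provided by Theorem~\ref{stable ideals are simplicial subclutters of complete clutters}.
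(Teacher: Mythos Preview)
Your overall strategy matches the paper's: reduce to a square-free (strongly) stable ideal with the same Betti sequence, then apply Theorem~\ref{stable ideals are simplicial subclutters of complete clutters}. The paper simply cites \cite[Theorem~3.3]{MJAR} for the existence of such an ideal; you attempt to make this explicit via $\mathrm{gin}$ followed by polarization.

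There is a genuine gap in your polarization step. The claim that the polarization of a stable ideal is square-free stable is false. Take $J=(x_1^2,\,x_1x_2,\,x_2^2)\subset\mathbb{K}[x_1,x_2]$, which is strongly stable. Its polarization is $J^{\mathrm{pol}}=(x_{1,1}x_{1,2},\,x_{1,1}x_{2,1},\,x_{2,1}x_{2,2})$. With any ordering placing $x_{1,2}$ before $x_{2,2}$, consider $u=x_{2,1}x_{2,2}$ and $j=x_{1,2}$: the square-free stable exchange would require $x_{1,2}x_{2,1}\in J^{\mathrm{pol}}$, but no generator of $J^{\mathrm{pol}}$ divides this monomial. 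The problem is that polarization introduces ``extra'' variables $x_{i,k}$ that do not correspond to any exchange in the original ring, so your parenthetical verification cannot go through.

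The correct device (and what underlies the argument in \cite{MJAR}) is the Aramova--Herzog--Hibi \emph{square-free operator} $\sigma$, which sends a strongly stable ideal generated in degree $d$ in $\mathbb{K}[x_1,\ldots,x_n]$ to a square-free strongly stable ideal in $\mathbb{K}[x_1,\ldots,x_{n+d-1}]$ while preserving graded Betti numbers; see \cite[Lemma~11.2.6 and Theorem~11.2.7]{HHBook}. Replacing polarization by $\sigma$ repairs your argument and brings it in line with the paper's proof.
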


\begin{proof}
Let $I$ be a graded ideal in $S$ with $d$-linear resolution. As it is stated in the proof of \cite[Theorem 3.3]{MJAR}, there exists a square-free (strongly) stable ideal, say $J$, such that the Betti sequence of $I$ coincides with the Betti sequence of $J$. The assertion now follows from Theorem~\ref{stable ideals are simplicial subclutters of complete clutters}.
\end{proof}
Theorem~\ref{all} is quite surprising because it confirms that the Betti sequence of any graded ideal with linear resolution over all fields is the Betti sequence of an ideal with linear quotients. Note that not all ideals with linear resolution over all fields have linear quotients (e.g \cite[Example~3.14]{BYZ}).

\section{Contractible complexes vs. Chordal clutters}\label{44}
In Corollary~\ref{resolution of a simplicial subcluuter of complete clutter-cor}, it is stated that $I(\bar{\C})$ has linear quotients if $\C$ is a simplicial clutter. Simple examples show that the class of simplicial clutters are not equivalent to the class of equigenerated square-free monomial ideals with linear quotients. It follows that they are not equivalent to the class of square-free ideals with linear resolution over all fields.
\begin{ex}\label{counter2}
Let $G$ be the graph in Figure~\ref{parvaneh}.
\begin{figure}[H]
\centering
\begin{tikzpicture}[line cap=round,line join=round,>=triangle 45,x=0.5cm,y=0.5cm]
\clip(2.,1.3) rectangle (12.,5.5);
\draw [line width=0.8pt] (5.,5.)-- (3.,2.);
\draw [line width=0.8pt] (3.,2.)-- (7.,2.);
\draw [line width=0.8pt] (7.,2.)-- (5.,5.);
\draw [line width=0.8pt] (7.,2.)-- (9.,5.);
\draw [line width=0.8pt] (9.,5.)-- (11.,2.);
\draw [line width=0.8pt] (11.,2.)-- (7.,2.);
\draw (2.7,2.0) node[anchor=north west] {\begin{scriptsize}$1$\end{scriptsize}};
\draw (4.84,5.7) node[anchor=north west] {\begin{scriptsize}$2$\end{scriptsize}};
\draw (6.75,2) node[anchor=north west] {\begin{scriptsize}$3$\end{scriptsize}};
\draw (8.8,5.7) node[anchor=north west] {\begin{scriptsize}$4$\end{scriptsize}};
\draw (10.8,2) node[anchor=north west] {\begin{scriptsize}$5$\end{scriptsize}};
\begin{scriptsize}
\draw [fill=black] (5.,5.) circle (1.5pt);
\draw [fill=black] (3.,2.) circle (1.5pt);
\draw [fill=black] (7.,2.) circle (1.5pt);
\draw [fill=black] (9.,5.) circle (1.5pt);
\draw [fill=black] (11.,2.) circle (1.5pt);
\end{scriptsize}
\end{tikzpicture}
\caption{graph G}\label{parvaneh}
\end{figure}
We have $$I(\bar{G})=(x_1x_2,x_1x_3,x_2x_3,x_1x_2,x_3x_4,x_3x_5,x_4x_5)$$ which has linear quotients, and hence linear resolution over all fields. But it is easy to check that $G$ is not a simplicial clutter. 
\end{ex}

Now let us consider the dual class of simplicial clutters: the class of clutters with $\varnothing$ as their simplicial subclutter. We know that the ideal attached to the elements of this class have linear resolution over all fields.  Moreover, the class of chordal clutters is a  subclass of this class. So far, we do not know if the class of chordal clutters is indeed equal to the class of clutters with $\varnothing$ as their simplicial subclutter. However, the main idea for defining the class of chordal clutters is to find the largest possible class of uniform clutters such that the associated circuit ideal has a linear resolution over all fields. Note that the class of chordal clutters is not a subclass of the class of simplicial clutters, for instance, the graph $G$ in Example~\ref{counter2}  is chordal, while it is not a simplicial clutter.

In \cite{BYZ} it is proved that the class $\mathfrak{C}_d$ of chordal clutters strictly contains several previously known classes of  clutters, whose associated ideals have linear resolution over all fields, such as  classes defined by Emtander \cite{Emtander} and Woodroofe \cite{Woodroofe}.

 It is  shown in \cite[Theorem~2.5]{NZ} that if the circuit ideal of a uniform clutter $\mathcal{C}$ is square-free stable, then $\bar{\mathcal{C}}$ is chordal.  Since square-free lexsegment ideals and square-free strongly stable ideals are square-free stable, it follows that these classes also come form chordal clutters. 
 
 Another subclass of $\mathfrak{C}_d$ is associated to vertex decomposable simplicial complexes: Let $\Delta$ be the clique complex of a $d$-uniform clutter $\mathcal{C}$ with the property that its Alexander dual, $\Delta^\vee$,  is vertex decomposable. Then $\mathcal{C}$ is chordal, \cite[Theorem~3.5]{Nik}. This, in particular, implies that the matroidal ideals also come from chordal clutters. Indeed, it follows from the fact that any matroidal ideal is the Stanley-Reisner ideal of the dual of a vertex decomposable simplicial complex, \cite[Proposition~7]{ER}. 
 
 These evidence seemed to strengthen the guess that all square-free monomial ideals with linear resolution over all fields are associated to chordal clutters. In support of this guess,  in \cite{BYZ} the authors asked whether there exists a $d$-uniform clutter $\mathcal{C}$ such that the ideal $I(\bar{\mathcal{C}})$ has a linear resolution over all fields, while $\mathcal{C}$ does not belong to the class $\mathfrak{C}_d$,   \cite[Question~1]{BYZ}. Recently, due to a discussion with Eric Babson, it turned out that the answer  is positive. In the following we first find a class of non-chordal clutters whose ideals have linear resolution over all fields. Then we give explicit examples of this class.

\medskip
Before finding the mentioned class, we  first go through the following Lemma.  Let $I \subset S$ be a square-free monomial ideal. For each $j$, we write
$I_{[j]}$ for the ideal generated by all the square-free monomials of degree $j$ belonging
to $I$.

\begin{lem} \label{regularity via component}
Let $\Delta$ be a simplicial complex on the vertex set $[n]$, $I=I_\Delta \subset S$ its Stanley-Reisner ideal and $d = \max \{\deg (u) \colon \quad u \in \mathcal{G}(I) \}$. For $t \geq d$, let 
\begin{equation*}
\Delta_t  = \Delta \cup \langle [n] \rangle^{[t-2]}.
\end{equation*}
Then
 $I_{\Delta_t} = I_{[t]}$ and 
 if $\textbf{\rm \textbf{a}} \in \mathbb{Z}^n$ and $|\textbf{\rm \textbf{a}}| >i+t$, then $\beta_{i,\textbf{\rm \textbf{a}}}(I) = \beta_{i,\textbf{\rm \textbf{a}}}(I_{[t]})$. In particular, $\beta_{i,j}(I) = \beta_{i,j}(I_{[t]})$, if $j-i>t$.
\end{lem}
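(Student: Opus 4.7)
The plan is to split the lemma into its two assertions: the ideal equality $I_{\Delta_t}=I_{[t]}$, which is purely combinatorial, and the Betti number comparison, which I would reduce to a homological statement via Hochster's formula. Both parts rely on the hypothesis $t\geq d$, though in different ways.

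For the ideal equality, I would first observe that, by construction, $\Delta_t$ contains every subset of $[n]$ of size at most $t-1$, so the non-faces of $\Delta_t$ are precisely the subsets $F\subseteq[n]$ with $F\notin\Delta$ and $|F|\geq t$. The containment $I_{[t]}\subseteq I_{\Delta_t}$ is immediate from this description. For the reverse containment, given a non-face $F$ of $\Delta$ with $|F|\geq t$, I would choose a minimal non-face $F'\subseteq F$ of $\Delta$; since $|F'|\leq d\leq t\leq|F|$, some set $G$ with $F'\subseteq G\subseteq F$ and $|G|=t$ exists, and $\mathbf{x}_G\in I_{[t]}$ divides $\mathbf{x}_F$. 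This is the only place the hypothesis $t\geq d$ enters.

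For the Betti numbers, since both ideals are square-free monomial it suffices to treat $\mathbf{a}\in\{0,1\}^n$; setting $W=\mathrm{supp}(\mathbf{a})$, so $|W|=|\mathbf{a}|$, Hochster's formula converts the claim into showing
\begin{equation*}
\widetilde{H}_{|W|-i-2}(\Delta_W;\mathbb{K})\cong\widetilde{H}_{|W|-i-2}((\Delta_t)_W;\mathbb{K}),
\end{equation*}
and the hypothesis $|\mathbf{a}|>i+t$ translates to $|W|-i-2\geq t-1$. Since $(\Delta_t)_W=\Delta_W\cup\langle W\rangle^{[t-2]}$ and $\langle W\rangle^{[t-2]}$ only contributes faces of dimension at most $t-2$, every face of dimension $k\geq t-1$ of $(\Delta_t)_W$ already lies in $\Delta_W$. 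Thus the two oriented chain complexes coincide in every dimension $\geq t-1$, and the relative chain complex of the pair $((\Delta_t)_W,\Delta_W)$ is concentrated in dimensions $\leq t-2$. The long exact homology sequence of the pair then yields the required isomorphism in dimension $|W|-i-2\geq t-1$, and summing $\beta_{i,\mathbf{a}}$ over all $\mathbf{a}$ with $|\mathbf{a}|=j>i+t$ recovers the graded statement.

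The main subtlety is the chain-level comparison: although $\Delta_W$ and $(\Delta_t)_W$ share the same $k$-faces for $k\geq t-1$, one must check that the boundary maps between these agree and interact correctly with $C_{t-2}((\Delta_t)_W)$, which is strictly larger than $C_{t-2}(\Delta_W)$. Packaging the argument as the long exact sequence of the pair handles these orientation and boundary-map bookkeeping issues automatically, which is why I would prefer it over a direct cycle-by-cycle comparison.
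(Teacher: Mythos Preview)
Your proof is correct and follows essentially the same approach as the paper: both arguments establish the ideal equality by identifying the non-faces of $\Delta_t$ as the non-faces of $\Delta$ of size at least $t$, and both reduce the Betti number comparison to a homology isomorphism via Hochster's formula, using that $\Delta_W$ and $(\Delta_t)_W$ have the same faces in dimensions $\geq t-1$. The only cosmetic difference is that the paper states the homology isomorphism directly from $\Delta_t^{[i]}=\Delta^{[i]}$ for $i>t-2$, whereas you package the same observation via the long exact sequence of the pair; your version is slightly more explicit about the boundary-map issue you flag, but the underlying content is identical.
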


\begin{proof}
If $\mathrm{\textbf{x}}_F \in I_{[t]}$, then $|F| \geq t$ and there exists a subset $G \subseteq F$ with $\mathrm{\textbf{x}}_G \in \mathcal{G}(I)$ and $\mathrm{\textbf{x}}_G |\mathrm{\textbf{x}}_F$. Since $\textbf{x}_G \in \mathcal{G}(I)$ and $G\subseteq F$, we conclude that $F \notin \Delta$. Since $|F|\geq t$ we have  $F \notin \Delta_t$. Hence $\textbf{x}_F \in I_{\Delta_t}$.

Conversely, take an element $\textbf{x}_F \in I_{\Delta_t}$. Then, $F \notin \Delta$ and $|F| \geq t$.  Hence $\mathbf{x}_F\in I_\Delta$ with $\deg (\mathbf{x}_F)\geq t$. This implies that $\textbf{x}_F \in I_{[t]}$.

 Let $t \geq d$. It follows from definition of $\Delta_t$ that $\Delta_t^{[i]} = \Delta^{[i]}$, for all $i >t-2$. Hence $\widetilde{H}_i \left(\left(\Delta_t\right)_W; K \right) = \widetilde{H}_i \left(\Delta_W; K \right)$, for all $i>t-2$ and $W \subset [n]$. Now, let $\textbf{\rm \textbf{a}} \in \mathbb{Z}^n$ with $|\textbf{\rm \textbf{a}}| >i+t$ and let $W= \mathrm{supp}(\textbf{\rm \textbf{a}})$. Then, by a theorem of Hochster, we have
\begin{align*}
\beta_{i,\textbf{\rm \textbf{a}}}(I_\Delta) &= \dim_\mathbb{K} \widetilde{H}_{|W|-i-2} \left( \Delta_W; \mathbb{K} \right)\\
&= \dim_K \widetilde{H}_{|W|-i-2} \left( \left(\Delta_t \right)_W; \mathbb{K}\right) = \beta_{i,\textbf{\rm \textbf{a}}}(I_{\Delta_t}).
\end{align*}
\end{proof}

\begin{rem}
Let $I$ be a square-free monomial ideal. It is easy to check that as a consequence of Lemma~\ref{regularity via component} we have 
\begin{align}
\reg \left(I_{[t]}\right) = \max \left\{ t, \; \reg(I)\right\}, \text{ for all $t\geq d$}.
\end{align}
Moreover, 
\begin{align}\label{algorithm}
\reg(I) = \min \left\{ t\colon \quad t \geq d \text{ and } I_{[t]} \text{ has a $t$-linear resolution} \right\}.
\end{align}
The formula (\ref{algorithm}) implies that one will get an algorithm for computing the regularity of an arbitrary  monomial ideal once there is an algorithm which decides whether a monomial ideal has a linear resolution.  
This formula  implies, in particular,  that if $I$ has a $d$-linear resolution, then $I_{[t]}$ has a $t$-linear resolution for all $t \geq d$.
\end{rem}

\begin{lem}[{proposed by a referee of \cite{BYZ}}] \label{chordal clutters collapse}
If $\C$ is a $d$-uniform chordal clutter on the vertex set $[n]$, then $\Delta(\C)$ collapses to a subcomplex of $\langle [n] \rangle^{[d-2]}$.
\end{lem}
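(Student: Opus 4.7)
The plan is to fix a simplicial order $e_1,\ldots,e_t$ of $\C$ (so that $\C_t:=\C\setminus e_1\setminus\cdots\setminus e_t=\varnothing$) and explicitly construct, stage by stage, a sequence of elementary collapses of $\Delta(\C)$ whose net effect in stage $i$ is to remove the $2^{k_i}$ faces $\{e_i\cup G : G\subseteq N_i\setminus e_i\}$, where $N_i:=N_{\C_{i-1}}[e_i]$ and $k_i:=|N_i\setminus e_i|\ge 1$. Inside stage $i$ I would fix a vertex $v\in N_i\setminus e_i$, pair these $2^{k_i}$ faces as $(e_i\cup G',\,e_i\cup G'\cup\{v\})$ with $G'\subseteq N_i\setminus e_i\setminus\{v\}$, and process the $2^{k_i-1}$ pairs in decreasing order of $|G'|$. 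Observe that the very last pair of stage $i$ is $(e_i,\,e_i\cup\{v\})$, so the face $e_i$ itself is also deleted.

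The heart of the argument is checking that each elementary collapse is legitimate and then identifying the final complex. For legitimacy I would use two ingredients. The first is that since $e_i\in\mathrm{Simp}(\C_{i-1})$, every face of $\Delta(\C_{i-1})$ containing $e_i$ is a subset of $N_i$, so the ``upset'' of $e_i$ in $\Delta(\C_{i-1})$ is literally $\{e_i\cup G : G\subseteq N_i\setminus e_i\}$ and admits no ``foreign'' cofaces. The second is the bookkeeping identity $\Delta_{i-1}=\Delta(\C_{i-1})\setminus\{e_1,\ldots,e_{i-1}\}$, where $\Delta_{i-1}$ is the complex at the start of stage $i$; this is proved by induction on $i$, using that any $F\in\Delta(\C)$ of size $\ge d$ which first fails to be a clique at step $j<i$ must contain some $e_j$ and, by the simplicial property, lie in $N_j$, hence be of the form $e_j\cup G$ and already deleted. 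Combining these two ingredients, when the pair $(e_i\cup G',\,e_i\cup G'\cup\{v\})$ is reached all strict cofaces of $e_i\cup G'\cup\{v\}$ have already been removed and $e_i\cup G'\cup\{v\}$ is the unique strict coface of $e_i\cup G'$, so the collapse is a valid elementary collapse.

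After completing all $t$ stages one obtains $\Delta_t=\Delta(\C_t)\setminus\{e_1,\ldots,e_t\}=\langle[n]\rangle^{[d-2]}\setminus\{e_1,\ldots,e_t\}$; since each $e_i$ is a facet of $\langle[n]\rangle^{[d-2]}$, its removal preserves the property of being a simplicial complex, and the result is a subcomplex of $\langle[n]\rangle^{[d-2]}$, as required.

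The main obstacle, and the point that motivates the particular choice of pairing above, is a parity subtlety: the number of faces of size $\ge d$ above $e_i$ is $2^{k_i}-1$, an odd number, so one cannot collapse just those with elementary moves; one is forced to also delete the $(d-2)$-face $e_i$ at each stage. This is precisely why the conclusion of the lemma reads ``subcomplex of $\langle[n]\rangle^{[d-2]}$'' rather than the whole of $\langle[n]\rangle^{[d-2]}$.
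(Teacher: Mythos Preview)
Your proposal is correct and follows essentially the same approach as the paper: fix a simplicial order, at stage $i$ remove every face of $\Delta_{i-1}$ containing $e_i$, and identify the terminal complex as $\langle[n]\rangle^{[d-2]}\setminus\{e_1,\ldots,e_t\}$ via the identity $\Delta_{i}=\Delta(\C\setminus e_1\setminus\cdots\setminus e_i)\setminus\{e_1,\ldots,e_i\}$. The only difference is granularity: the paper uses its own notion of \emph{simple collapse} (delete all faces above a free face) and simply observes that $e_i$ is free in $\Delta_{i-1}$ with unique containing facet $N_{\C_{i-1}}[e_i]$, whereas you unpack each such step into an explicit sequence of codimension-one elementary collapses via the pairing $G'\leftrightarrow G'\cup\{v\}$, and you supply the bookkeeping identity in more detail. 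Your parity remark nicely explains why one cannot avoid deleting the $(d-2)$-faces $e_i$ themselves.
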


\begin{proof}

Let $\textbf{\rm \textbf{e}}=e_1, \ldots, e_t$ be a simplicial order over $\C$ and $\Delta_0 := \Delta(\C)$. For all $i>0$, define
\begin{align*}
\Delta_i = \Delta_{i-1} \setminus \{ F \in \Delta_{i-1} \colon \, e_i \subseteq F\}.
\end{align*}
Note that  $N_{\C}[e_1]$ is the only facet of $\Delta_0$ which properly contains $e_1$ and for $i>1$, $N_{\C  \setminus e_1 \setminus \cdots \setminus e_{i-1} }[e_i]$ is the only facet of $\Delta_{i-1}$ which properly contains $e_i$. Therefore $\Delta_0$ collapses to $\Delta_t$. Since $ \Delta(\C \setminus e_1 \setminus \cdots \setminus e_{i})=\Delta_i\cup\langle e_1,\ldots, e_i\rangle$, it follows  that $\Delta_t =\langle [n] \rangle^{[d-2]} \setminus \{e_1, \ldots, e_t\}$.
\end{proof}

In the following we find a class of clutters such that their associated ideal have a linear resolution over all fields, while the clutter is not chordal. In order to do this, first we recall some definitions from algebraic topology.

A face of a simplicial complex $\Delta$ is called a {\em free face} if it is properly contained in a unique facet of $\Delta$. By a  {\em simple collapse} of $\Delta$ we mean  the simplicial complex obtained from $\Delta$  by removing of all faces of $\Delta$ containing a free face. A simplicial complex $\Delta$ is said to \textit{collapsible} to a simplicial complex $\Delta'$, if $\Delta'$ is obtained from $\Delta$ by a sequence of simple collapses.
If a simplicial complex $\Delta$ collapses to the empty simplicial complex then $\Delta$ is called {\em collapsible}. Note that any collapsible simplicial complex is contractible. Recall that a simplicial complex is called {\em contractible} if its geometric realization, as a topological space, is contractible, i.e. if the identity map on it is homotopic to some constant map.

For a pure $(d-1)$-dimensional simplicial complex $\Delta$, let $\mathcal{C}_{\Delta}$ denote the $d$-uniform clutter whose circuits are the facets of $\Delta$. 

\begin{prop}\label{a class of counters}
Let $\Delta$ be a pure simplicial complex of dimension $d-1$ on the vertex set $[n]$ such that
\begin{itemize}
\item[\rm (i)] $\Delta$ is contractible;
\item[\rm (ii)] $\Delta$ does not have a free face;
\item[\rm (iii)] The clique complex of $\mathcal{C}_{\Delta}$ is of dimension $d-1$. 
\end{itemize} 
Then
\begin{itemize}
\item[\rm (a)] $\mathcal{C}_{\Delta}$ is not a chordal clutter.
\item[\rm (b)] The ideal $I (\bar{\C}_\Delta)$ has a linear resolution over all  fields.
\end{itemize}
\end{prop}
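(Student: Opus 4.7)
The starting point is to observe that condition (iii) pins down the clique complex as $\Delta(\mathcal{C}_\Delta) = \Delta \cup \langle [n] \rangle^{[d-2]}$: faces of size $\geq d$ in the clique complex must be $d$-cliques, which are the facets of $\Delta$, while every subset of $[n]$ of size $\leq d-1$ is automatically a clique. Thus the facets of $\Delta(\mathcal{C}_\Delta)$ come in two types: the $d$-element facets of $\Delta$, and the $(d-1)$-element subsets of $[n]$ not contained in any facet of $\Delta$. I will use this decomposition throughout.

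For part (b), the plan is to apply Hochster's formula: $I(\bar{\mathcal{C}}_\Delta) = I_{\Delta(\mathcal{C}_\Delta)}$ has a $d$-linear resolution over $\mathbb{K}$ iff $\widetilde{H}_\ell((\Delta(\mathcal{C}_\Delta))_W; \mathbb{K}) = 0$ for every $W \subseteq [n]$ and every $\ell \neq d-2$. Write $K_W = (\Delta(\mathcal{C}_\Delta))_W = \Delta_W \cup \langle W \rangle^{[d-2]}$. For $\ell \leq d-3$ the full $(d-2)$-skeleton of $\langle W\rangle$ sits inside $K_W$, so any $\ell$-cycle of $K_W$ bounds inside the contractible simplex $\langle W\rangle$ via a chain in $\langle W\rangle^{[d-2]} \subseteq K_W$. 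For $\ell \geq d$, vanishing is immediate from $\dim K_W \leq d-1$ (by (iii)). For the borderline case $\ell = d-1$, the chain groups satisfy $C_{d-1}(K_W) = C_{d-1}(\Delta_W)$ and $C_d(K_W) = 0$, so $\widetilde{H}_{d-1}(K_W) = \ker \partial_{d-1}|_{C_{d-1}(\Delta_W)}$; contractibility of $\Delta$ from (i), combined with $\dim \Delta = d-1$, forces $\partial_{d-1}$ to be injective on $C_{d-1}(\Delta)$ and hence on its subspace $C_{d-1}(\Delta_W)$. All of this is uniform in the coefficient field.

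For part (a), I would argue by contradiction. If $\mathcal{C}_\Delta$ is chordal, Lemma~\ref{chordal clutters collapse} gives a sequence of simple collapses reducing $\Delta(\mathcal{C}_\Delta)$ to a subcomplex of $\langle [n]\rangle^{[d-2]}$. Since $\Delta$ is nonempty and has $d$-element facets, at least one collapse must occur, providing a free face $e$ properly contained in a unique facet $F$ of $\Delta(\mathcal{C}_\Delta)$. If $F$ is a $d$-element facet of $\Delta$, then $e \in \Delta$ is contained in a unique facet of $\Delta$, which is a free face of $\Delta$ contradicting (ii). If instead $F$ is a $(d-1)$-set not in $\Delta$, then $|e| \leq d-2$ and $e \notin \Delta$ (otherwise a $d$-facet of $\Delta$ containing $e$ would be a second facet of $\Delta(\mathcal{C}_\Delta)$ through $e$); but then every $(d-1)$-element subset of $[n]$ containing $e$ must likewise lie outside $\Delta$ (else $e \in \Delta$), so each such extension is itself a facet of $\Delta(\mathcal{C}_\Delta)$ through $e$, forcing $\binom{n-|e|}{d-1-|e|} = 1$, which is impossible for $n \geq d$ and $|e| \leq d-2$. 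Either way we reach a contradiction, so $\mathcal{C}_\Delta$ is not chordal.

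The main technical points I expect to handle carefully are: in (b), that contractibility of $\Delta$ translates (via $\dim \Delta = d-1$ and $C_d(\Delta) = 0$) into injectivity of $\partial_{d-1}$ even after restriction to $C_{d-1}(\Delta_W)$; and in (a), that the combinatorial inequality $\binom{n-|e|}{d-1-|e|} \geq 2$ holds for every admissible $|e|$. Everything else should be routine once the decomposition $\Delta(\mathcal{C}_\Delta) = \Delta \cup \langle [n]\rangle^{[d-2]}$ is in hand.
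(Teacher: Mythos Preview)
Your proposal is correct and follows essentially the same route as the paper. For (b), the paper first shows $\beta_{i,j}(I_\Delta)=0$ for $j-i>d$ via Hochster and contractibility, then invokes Lemma~\ref{regularity via component} to transfer this to $I_{\Delta'}$; you instead apply Hochster directly to $\Delta' = \Delta(\mathcal{C}_\Delta)$ and verify the required vanishing of $\widetilde{H}_\ell((\Delta')_W)$ by hand --- the same computation, just with the lemma unpacked inline. For (a), the paper's argument is a single sentence (``$\Delta$ has no free face, so $\Delta'$ does not collapse to a subcomplex of $\langle[n]\rangle^{[d-2]}$''); your case analysis on whether the unique facet containing a hypothetical free face of $\Delta'$ is a $d$-facet of $\Delta$ or a $(d-1)$-set outside $\Delta$ is exactly the justification that sentence needs, and your counting argument $\binom{n-|e|}{d-1-|e|}\ge 2$ correctly disposes of the second case.
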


\begin{proof}
Let $\Delta'$ be the clique complex of $\C_{\Delta}$. Then since $\Delta$ and $\Delta'$ are both of dimension $d-1$ we have
$$\Delta' = \Delta \cup \langle [n] \rangle^{[d-2]}.$$

(a) Since $\Delta$ does not have a free face, 
it follows that $\Delta'$ does not collapse to a subcomplex of  $\langle [n] \rangle^{[d-2]}$. So that $\C_{\Delta}$ is not chordal by an application of Lemma~\ref{chordal clutters collapse}.

(b) Since $\Delta$ is contractible, $\widetilde{H}_i \left( \Delta; \mathbb{K} \right) = 0$ for all $i$. Since $\dim \Delta = d-1$, we conclude that $\widetilde{H}_i \left( \Delta_W; \mathbb{K} \right) = 0$ for all $i>d-2$  and for all $W \subset [n]$. Thus
\begin{equation*}
\beta_{i, j} (I_\Delta) = \sum\limits_{\textbf{\rm \textbf{a}} \in \mathbb{Z}^n \atop |\textbf{\rm \textbf{a}}|=j} \beta_{i, \textbf{\rm \textbf{a}}} (I_\Delta) = \sum\limits_{W \subset [n] \atop |W|=j} \dim_{\mathbb{K}} \widetilde{H}_{j-i-2} \left(\Delta_W; \mathbb{K} \right) = 0,
\end{equation*}
for all $i$ and $j$ with $j-i>d$. Now since $\Delta= \Delta_d,$ as in Lemma~\ref{regularity via component}, it follows from Lemma~\ref{regularity via component} that $\beta_{i,j} \left( I (\bar{\C}_\Delta) \right) = \beta_{i,j} (I_{\Delta'})=0$, for all $i,j$ with $j-i>d$. This completes the proof.
\end{proof}
The following two examples are samples of the class defined in Proposition~\ref{a class of counters}. To the knowledge of the authors, there are a few known such examples. 
\begin{ex}[{A couterexample proposed by Eric Babson}]\label{duncehat example}
Let $\Delta$ be a pure $2$-dimensional simplicial complex whose geometric realization is a triangulation of a dunce hat, see Figure~\ref{duncehatce}. 
\begin{figure}[ht!]
		\begin{center}
		\begin{tikzpicture}[line cap=round,line join=round,>=triangle 45, scale=0.5]
				\definecolor{uuuuuu}{rgb}{0.28,0.28,0.27}
		\definecolor{eqeqeq}{rgb}{0.88,0.88,0.88}
		\fill[fill=black,fill opacity=0.1] (8.,6.) -- (3.,-2.) -- (13.,-2.) -- cycle;
		\fill[color=eqeqeq,fill=eqeqeq,fill opacity=0.1] (8.,2.) -- (7.,1.) -- (7.46,0.) -- (8.68,0.) -- (9.,1.) -- cycle;
		\draw (8.,6.)-- (3.,-2.);
		\draw (3.,-2.)-- (13.,-2.);
		\draw (13.,-2.)-- (8.,6.);
		\draw (8.,2.)-- (7.,1.);
		\draw (7.,1.)-- (7.46,0.);
		\draw (7.46,0.)-- (8.68,0.);
		\draw (8.68,0.)-- (9.,1.);
		\draw (9.,1.)-- (8.,2.);
		\draw (8.,2.)-- (8.,6.);
		\draw (7.,1.)-- (8.,6.);
		\draw (7.,1.)-- (6.09,2.9);
		\draw (7.,1.)-- (4.8,0.97);
		\draw (7.46,0.)-- (3.,-2.);
		\draw (4.85,0.97)-- (7.46,0.);
		\draw (7.46,0.)-- (6.48,-2.);
		\draw (8.68,0.)-- (6.48,-2.);
		\draw (8.,2.)-- (7.46,0.);
		\draw (8.,2.)-- (8.68,0.);
		\draw (8.68,0.)-- (10.,-2.);
		\draw (8.68,0.)-- (13.,-2.);
		\draw (9.,1.)-- (13.,-2.);
		\draw (9.,1.)-- (11.1,0.98);
		\draw (9.,1.)-- (9.87,3.0);
		\draw (8.,2.)-- (9.87,3.0);
		\begin{scriptsize}
		\draw [fill](8.,6.) circle (1.5pt);
		\draw (8.0,6.3) node {$1$};
		\draw [fill] (3.,-2.) circle (1.5pt);
		\draw (3.0,-2.35) node {$1$};
		\draw [fill] (13.,-2.) circle (1.5pt);
		\draw (13.0,-2.35) node {$1$};
		\draw [fill] (6.08,2.9) circle (1.5pt);
		\draw (5.75,3.0) node {$3$};
		\draw [fill] (4.85,0.97) circle (1.5pt);
		\draw(4.55,1.07) node {$2$};
		\draw [fill] (11.14,0.97) circle (1.5pt);
		\draw(11.5,1.07) node {$2$};
		\draw [fill] (9.87,3.0) circle (1.5pt);
		\draw(10.2,3.1) node {$3$};
		\draw [fill] (6.48,-2.) circle (1.5pt);
		\draw(6.48,-2.4) node {$3$};
		\draw [fill] (10.,-2.) circle (1.5pt);
		\draw (10.0,-2.4) node {$2$};
		\draw [fill] (8.,2.) circle (1.5pt);
		\draw (7.73,2.15) node {$6$};
		\draw [fill] (7.,1.) circle (1.5pt);
		\draw(6.6,1.23) node {$5$};
		\draw [fill] (7.46,0.) circle (1.5pt);
		\draw(7.5,-0.4) node {$4$};
		\draw [fill] (8.68,0.) circle (1.5pt);
		\draw (8.65,-0.45) node {$8$};
		\draw [fill] (9.,1.) circle (1.5pt);
		\draw (9.4,1.23) node {$7$};
		\end{scriptsize}
		\end{tikzpicture}
		\label{duncehatce}
	\end{center}
		\caption{A triangulation of the dunce hat}
	\end{figure}
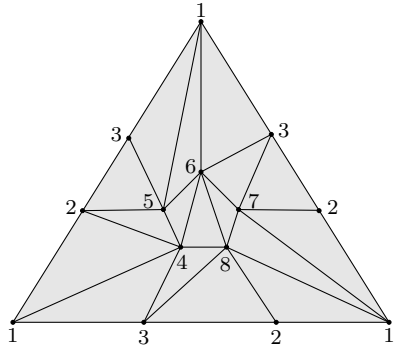
It is well-known that $\Delta$  is contractible \cite{Zeeman}. Moreover, it does not have a free face and, $\Delta(\C_\Delta)$ is $2$-dimensional simplicial complex. Thus $\Delta$ satisfies the assumptions of  Proposition~\ref{a class of counters}. Hence $I(\bar{\C}_{\Delta})$ has linear resolution over all fields, while $\C_\Delta$ is not chordal.
\end{ex}

\begin{ex}
Here we discuss Bing's celebrated example: House with two rooms \cite{Bi}, see Figure~\ref{bing}. 
\begin{figure}[H]
\includegraphics[width=4cm,height=3.5cm]{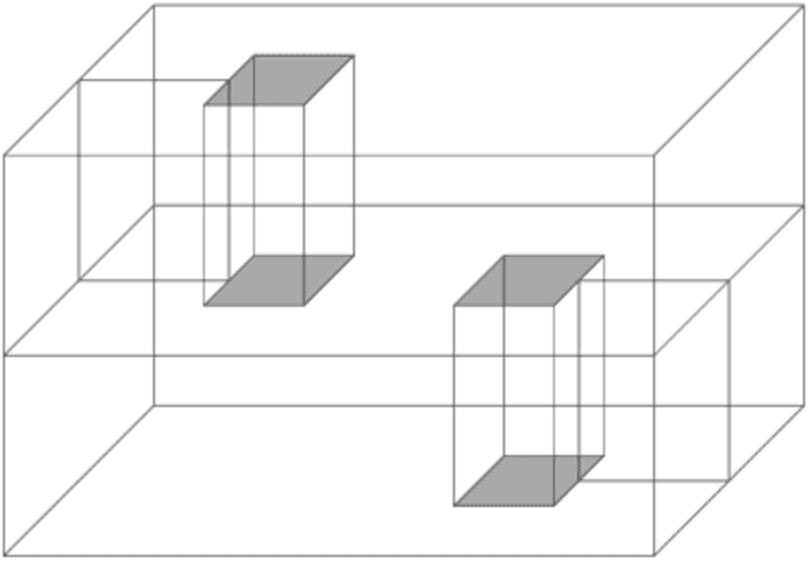}
\caption{Bing's House with two rooms}\label{bing}
\end{figure}

Consider a $2$-dimensional triangulation of this structure. Here we consider a triangulation explained in \cite[\S 5]{ADGL}, on $12$ vertices:
\begin{align*}
\mathcal{B}=
\Big\{&\{2,4,5\},\{ 2,3,5\},\{ 3,5,6\},\{ 3,4,6\},\{ 1,2,7\},\{ 2,7,8\},\{ 2,3,8\},\{ 3,8,9\},\{ 3,7,9\},\\
&\{4,7,8\},\{ 4,5,8\},\{ 5,6,8\},\{ 6,8,9\},\{ 6,7,9\},\{ 4,6,7\},\{ 1,4,7\},\{ 3,4,7\},\{ 1,2,3\},\\
&\{ 2,3,6\},\{ 2,5,6\},\{ 1,3,10\},\{ 3,10,11\},\{ 2,3,11\},\{ 2,11,12\},\{ 2,10,12\},\{ 4,10,11\},\\
&\{ 4,6,11\},\{ 5,6,11\},\{ 5,11,12\},\{ 5,10,12\},\{ 4,5,10\},\{ 1,4,10\},\{ 2,4,10\}
\Big\}.
\end{align*}
Let $\Delta$ be the simplicial complex with $\mathcal{F}(\Delta)=\mathcal{B}$. It is well-known that $\Delta$ is contractible; see e.g. \cite[Chapter 0]{HA}. The same discussion as in Example~\ref{duncehat example} shows that   
the ideal $I(\bar{\C}_\Delta)$ has a $3$-linear resolution over all fields, but  $\mathcal{\C}_\Delta$ is not chordal.
\end{ex}

\end{document}